\documentclass[11pt]{amsart}
\usepackage{xcolor}
\usepackage{hyperref}
\hypersetup{
    colorlinks = false,
    linkbordercolor = {white},
    citebordercolor = {white}
    }
\usepackage[capitalise,noabbrev]{cleveref}
\usepackage[all]{xy}

\usepackage{amsmath,amscd}
\usepackage{amssymb}
\usepackage{mathtools}
\usepackage{stmaryrd}
\usepackage{url}
\usepackage{tikz-cd}
\usepackage{enumitem}
\usepackage{fullpage}
\usepackage{musicography}

\usepackage{physics}

\usepackage{tikz}

\usepackage{enumitem,kantlipsum}

\author{Michael K. Brown}

\author{Justin Lyle}

\newcommand{\Addresses}{{

  	\vskip\baselineskip
  	\footnotesize
  	\noindent \textsc{Department of Mathematics and Statistics, Auburn University} \par\nopagebreak
	\noindent \textit{E-mail addresses:} \texttt{mkb0096@auburn.edu, \texttt{jll0107@auburn.edu}}
    \\
 }}

\numberwithin{equation}{section}
\newtheorem{lemma}[equation]{Lemma}

\newtheorem{theorem}[equation]{Theorem}

\newtheorem{prop}[equation]{Proposition}
\newtheorem{cor}[equation]{Corollary}

\newtheorem{claim*}{Claim}
\newtheorem{thm}[equation]{Theorem}

\theoremstyle{definition}
\newtheorem{defn}[equation]{Definition}

\newtheorem{notation}[equation]{Notation}
\newtheorem{example}[equation]{Example}

\newtheorem{construction}[equation]{Construction}

\newtheorem{setup}[equation]{Setup}

\theoremstyle{remark}
\newtheorem{remark}[equation]{Remark}
\newtheorem{remarks}[equation]{Remarks}

\newcommand{\mfrak}[1]{\mathfrak{#1}}

\renewcommand{\k}{\Bbbk}

\newcommand{\m}{\mfrak{m}}

\newcommand{\Hom}{\operatorname{Hom}}

\newcommand{\kk}{\mathbf{k}}

\newcommand{\del}{\partial}
\newcommand{\Mod}{\operatorname{Mod}}

\newcommand{\id}{\operatorname{id}}

\newcommand{\im}{\operatorname{Im}}

\newcommand{\cone}{\on{cone}}

\def\nc{\newcommand}
\nc{\on}{\operatorname}
\nc{\bideg}{\on{bideg}}
\nc{\xra}{\xrightarrow}
\def\phi{\varphi}
\def\th{\on{th}}

\nc{\into}{\hookrightarrow}
\nc{\onto}{\twoheadrightarrow}
\nc{\LL}{\mathbf{L}}
\nc{\RR}{\mathbf{R}}
\nc{\Perf}{\on{Perf}}
\nc{\nat}{\natural}
\nc{\tors}{\on{tors}}
\nc{\Tors}{\on{Tors}}

\def\Mod{\on{Mod}}
\nc{\qgr}{\on{qgr}}
\nc{\Qgr}{\on{Qgr}}
\nc{\fQgr}{\on{Qgr}^{\on{f}}}
\nc{\colim}{\on{colim}}
\def\Z{\mathbb{Z}}
\nc{\Ext}{\on{Ext}}

\nc{\om}{\omega}
\nc{\w}{\widetilde}
\nc{\PP}{\mathbb{P}}
\nc{\mf}{\on{mf}}
\nc{\OO}{\mathcal{O}}
\nc{\Proj}{\on{Proj}}
\nc{\Qcoh}{\on{Qcoh}}
\nc{\coh}{\on{coh}}
\nc{\Tor}{\on{Tor}}
\nc{\Modf}{\Mod^{\on{f}}}

\nc{\ce}{\coloneqq}
\def\k{\kk}
\nc{\Com}{\on{Com}}
\nc{\A}{\mathcal{A}}
\nc{\B}{\mathcal{B}}
\nc{\C}{\mathcal{C}}
\nc{\Sh}{\on{Sh}}
\nc{\QCoh}{\on{QCoh}}
\nc{\Coh}{\on{Coh}}
\nc{\fQCoh}{\QCoh^{\on{f}}}
\nc{\ov}{\overline}
\nc{\End}{\on{\underline{End}}}
\def\MR#1{}

\nc{\Qgrf}{\Qgr^{\on{f}}}
\nc{\uHom}{\underline{\Hom}}
\nc{\Inj}{\mathrm{Inj}}
\nc{\proj}{\mathrm{Proj}}
\nc{\spec}{\mathrm{Spec}}
\nc{\uExt}{\underline{\Ext}}
\nc{\co}{\colon}

\nc{\lcd}{\on{lcd}}

\newcommand{\defi}[1]{\textsf{#1}}
\nc{\delimit}{\text{ $\on{:}$ }}
\def\A{\mathcal{A}}
\def\epsilon{\varepsilon}

\nc{\wA}{\widehat{A}}
\nc{\from}{\leftarrow}
\usepackage{comment}
\thanks{The first author was partially supported by NSF grants DMS-2302373, DMS-2302375, and DMS-2412042}

\begin{document}
\title{Derived complete intersections and polynomial growth of Betti numbers over dg-algebras}

\begin{abstract}
A theorem of Gulliksen states that a local ring is a complete intersection if and only if the Betti numbers of its finitely generated modules grow polynomially. We prove a derived version of Gulliksen's Theorem. More precisely, we prove a structure theorem for dg-algebras whose modules exhibit polynomial Betti growth. As a key ingredient in the proof, we establish the existence and uniqueness of minimal models and acyclic closures of morphisms of dg-algebras in a broader setting than was previously known. We also extend to dg-algebras a theorem of Halperin on the vanishing of deviations of local rings, recovering Gulliksen's Theorem as an immediate consequence. 
\end{abstract}

\thanks{{\em Mathematics Subject Classification} 2020: 13D02, 14F08}

\numberwithin{equation}{section}

\maketitle
\setcounter{tocdepth}{1}

\setcounter{section}{0}

\section{Introduction}
Let $R$ be a commutative Noetherian local ring with maximal ideal $\m$ and residue field $\k$, $M$ a finitely generated $R$-module, and 
$
\left[R^{\beta_0(M)} \from  R^{\beta_1(M)}  \from \cdots\right]
$
the minimal $R$-free resolution of $M$. The integers $\beta_i(M)$ are called the \defi{Betti numbers of $M$ over $R$}. A fundamental question that has shaped the history of commutative algebra is: what can one say about the asymptotic growth of these Betti numbers? For instance, the Auslander-Buchsbaum-Serre Theorem states that $R$ is regular if and only if $\beta_i(M) = 0$ for $i \gg 0$ and all $M$. Similarly, a theorem of Gulliksen characterizes when~$R$ is a complete intersection in terms of asymptotic growth of Betti numbers. We recall that~$R$ is a \defi{complete intersection} if, given a minimal Cohen presentation~$\varphi \co S \onto \widehat{R}$, where~$S$ is a regular local ring and $\widehat{R}$ is the $\m$-adic completion of $R$, the ideal $\ker(\varphi)$ is generated by a regular sequence. 
Gulliksen's Theorem is stated as follows:

\begin{thm}[\cite{gulliksen2} Theorem 2.3]\label{gulliksen}
The local ring $R$ is a complete intersection if and only if the Betti numbers of every finitely generated $R$-module grow polynomially. 
\end{thm}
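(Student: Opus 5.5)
We plan to prove both directions by passing to the completion and working with the minimal model (equivalently, the acyclic closure) of the residue field. Betti numbers are unchanged under $\m$-adic completion, and $R$ is a complete intersection exactly when $\widehat{R}$ is. We may therefore assume that $R$ is complete and fix a minimal Cohen presentation $S \onto R$.

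For the forward direction, write $R = S/(f_1,\dots,f_c)$ with $f_1,\dots,f_c$ an $S$-regular sequence contained in $\m_S^2$. Let $M$ be a finitely generated $R$-module. We will use the Eisenbud--Gulliksen construction of the CI operators $\chi_1,\dots,\chi_c$. These make $\Ext_R^*(M,\k)$ a finitely generated graded module over the polynomial ring $\k[\chi_1,\dots,\chi_c]$, with each $\chi_j$ in cohomological degree $2$. Finite generation follows from the change-of-rings spectral sequence from $\Ext_S(M,\k)$, which is finite because $S$ is regular. Since $\beta_i(M) = \dim_\k \Ext^i_R(M,\k)$, the Hilbert--Serre theorem then shows that $\beta_i(M)$ is bounded above by a quasi-polynomial of degree at most $c-1$. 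In particular, the Betti numbers grow polynomially.

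For the converse, it suffices to consider $M = \k$. We use the Tate acyclic closure $R\langle X\rangle$ of $\k$, whose variables have counts $\varepsilon_n = |X_n|$ (the deviations of $R$). The Poincaré series of $\k$ factors as the infinite product
\[
P_\k^R(t) = \prod_{n \ge 1} \frac{(1+t^{2n-1})^{\varepsilon_{2n-1}}}{(1-t^{2n})^{\varepsilon_{2n}}}.
\]
From this product we argue as follows:
\begin{itemize}
\item The coefficients of $P_\k^R(t)$ dominate those of each single factor.
\item If $\varepsilon_n \neq 0$ for infinitely many $n$, a comparison of the product against partition-type counts gives subexponential but superpolynomial growth.
\item So polynomial growth of $\beta_i(\k)$ forces $\varepsilon_n = 0$ for all $n \gg 0$.
\end{itemize}
Halperin's rigidity theorem then says that if $\varepsilon_n = 0$ for some $n \ge 3$, then $R$ is a complete intersection. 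Equivalently, $\varepsilon_3 = 0$ and $\varepsilon_2 = \operatorname{edim} R - \dim R$. Polynomial growth thus implies that $R$ is a complete intersection.

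The main obstacle is Halperin's vanishing theorem, because it requires structure on the homotopy Lie algebra $\pi^*(R)$. Its proof shows that a nonzero element of odd degree at least $3$, or more generally any nonvanishing deviation in high degree, propagates to infinitely many nonzero deviations. The tool for this is the Lie bracket and a Jacobi-identity argument, as in Avramov's proof via the Milnor--Moore/Félix--Halperin dichotomy. We would invoke this from the literature rather than reprove it.
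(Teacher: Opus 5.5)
\textbf{The gap.} The step that fails is in your converse: the claim that if $\varepsilon_n\neq 0$ for infinitely many $n$, then comparing the product with partition counts gives superpolynomial growth. Only the denominator factors $(1-t^{2n})^{-\varepsilon_{2n}}$ force growth. Infinitely many exterior factors do no harm if they are sparse. For instance, the exponents $2\cdot 3^i+1$ are odd and each exceeds the sum of all smaller ones, so every coefficient of $\prod_{i\ge 0}(1+t^{2\cdot 3^i+1})$ is $0$ or $1$. Example~\ref{ex:binary} realizes such a series as $P^A_\k(t)$ for a local dg-algebra $A$ with infinitely many nonzero odd deviations. So the product formula alone cannot exclude infinitely many nonzero odd deviations when growth is polynomial.

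For a local ring the conclusion $\varepsilon_n=0$ for $n\gg 0$ is still true, but it needs a structural input that the series does not see. That input is Gulliksen's result that a nonzero odd deviation in degree $q>1$ forces a nonzero even deviation in some degree $>q$ (Theorem~\ref{odddeviationliftstoeven}). Its proof uses that the homology class of $\partial(y_1)$ is nilpotent (Lemma~\ref{nilpotenthomology}). This is exactly why the paper assumes derived nilpotence for (3)$\Rightarrow$(2) in Theorem~\ref{derivedcithm}.

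\textbf{The repair, and comparison with the paper.} The gap is easy to close, because you invoke Halperin's theorem in full strength, which needs only one vanishing deviation in degree $\ge 3$. The even half of your argument is valid. Suppose $\varepsilon_{2a_1},\dots,\varepsilon_{2a_{d+1}}$ are nonzero and set $N=\operatorname{lcm}(2a_1,\dots,2a_{d+1})$. All other factors have nonnegative coefficients and constant term $1$, so $\beta_{iN}(\k)\ge\binom{i+d}{d}$. Hence polynomial growth forces $\varepsilon_{2n}=0$ for $n\gg 0$ (Lemma~\ref{largeevenderivations}), and Halperin's theorem finishes. With this change your converse is essentially the paper's route: Theorem~\ref{derivedcithm} followed by Corollary~\ref{cor:gulliksen}.

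The difference is that the paper proves Halperin's theorem rather than citing it, and not through the homotopy Lie algebra. Theorem~\ref{deviationsboundthm} (with $s=0$) builds degree $-t$ derivations on minimal models with switching degree $t$ or $t-1$. It shows that $n^S_{t+1}(R)=0$ forces $n^S_t(R)=0$ or $n^S_{t-1}(R)=0$. Repeating this pushes any vanishing $n^S_j(R)=0$ with $j\ge 3$ down to $n^S_2(R)=0$, i.e.\ $H_1(K(I,S))=0$.

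Your forward direction is correct and genuinely different. Finite generation of $\Ext_R(M,\k)$ over $\k[\chi_1,\dots,\chi_c]$ gives the sharper statement that $\beta_i(M)$ is eventually quasi-polynomial of degree at most $c-1$. The paper uses only that the acyclic closure of $\k$ is finitely generated, so $P^R_\k(t)$ is rational with poles at roots of unity. It then transfers polynomial growth to every $M$ via the elementary bound $P_M(t)\le \ell(t)P_\k(t)$ of Lemma~\ref{poincarebound}, an argument that works verbatim for dg-algebras.
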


Gulliksen's Theorem has become a landmark result in the study of free resolutions in commutative algebra. See e.g.~\cite{AH87,halperin87, jacobsson1982positivity, lofwall} for expansions on Gulliksen's Theorem, and see also earlier work of Assmus \cite{As59} for a related result.

Suppose now that $A$ is a local differential graded (dg) algebra with residue field $\k$ (Definition~\ref{defn:local}). For instance, if~$x_1, \dots, x_c$ is a (not-necessarily-regular) sequence of elements of the maximal ideal $\m \subseteq R$, then the Koszul complex $K$ on this sequence is a local dg-algebra. Suitably finite dg-$A$-modules admit analogues of Betti numbers  (see~Section~\ref{subsec:semifree} for background). In this paper, we ask: what can one say about the asymptotic growth of Betti numbers over $A$? For instance,  work of Pollitz~\cite{Po21} shows that, if $R$ is regular, and $K$ is the above Koszul complex over $R$, then the Betti numbers of (suitably finite) dg-$K$-modules admit polynomial growth. Our first main result characterizes the local dg-algebras with this property, giving a differential graded analogue of Gulliksen's Theorem. 

Before stating the theorem, we introduce some terminology. 
We say a dg-$R$-algebra $P$ is a \defi{polynomial~dg-$R$-algebra} if the algebra underlying $P$ is of the form $R[x_1, \dots, x_m] \otimes_R \bigwedge_R(e_1, \dots, e_n)$, where the variables~$x_i$ (respectively $e_i$) have positive even (respectively odd) degree. For instance, a polynomial dg-$R$-algebra in even degree variables is a polynomial ring over~$R$ with trivial differential, and a polynomial dg-$R$-algebra whose variables all have degree 1 is a Koszul complex over $R$. A polynomial dg-$R$-algebra $P$ is called \defi{minimal} if the image of the differential~$\del_P$ is contained in~$\m \cdot P + (x_1, \dots, x_m, e_1, \dots, e_n)^2$. Letting~$S$ be a minimal Cohen presentation of the completion~$\widehat{A_0}$ of $A_0$, we say $A$ is a \defi{derived complete intersection} if there is a quasi-isomorphism~$P \xra{\simeq} A \otimes_{A_0} \widehat{A_0}$, where~$P$ is a minimal polynomial dg-$S$-algebra. When this occurs, it follows from our results (see Theorem~\ref{intro:minmodels}) that $P$ is unique up to isomorphism of dg-algebras. 
Our differential graded analogue of Gulliksen's Theorem is the following:

\begin{thm}
\label{thm:intromain}
A local dg-algebra $A$ with bounded homology is a derived complete intersection if and only if the Betti numbers of every dg-$A$-module~$M$ such that $H(M)$ is finitely generated over~$H_0(A)$ grow polynomially.
\end{thm}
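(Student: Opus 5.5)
We reduce both directions to the behaviour of the deviations of the residue field, following the classical route Gulliksen $\Leftarrow$ Halperin. First we replace $A$ by $A \otimes_{A_0} \widehat{A_0}$. Completion is faithfully flat and preserves minimal semifree resolutions, so Betti numbers over $A$ and over the completion agree, and we may assume $A_0$ is complete. Choose a minimal Cohen presentation $S \onto A_0$. Using the existence and uniqueness of minimal models (Theorem~\ref{intro:minmodels}), we factor $S \to A$ through a minimal model $S[X] \xra{\simeq} A$, where $S[X]$ is a minimal semifree extension with free graded variables $X$. The numbers $\varepsilon_i = \#\{\text{variables in } X \text{ of degree } i-1\}$, shifted by the embedding dimension of $S$, play the role of the deviations of $A$. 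By definition, $A$ is a derived complete intersection exactly when $X$ can be chosen concentrated in degrees $1$ and $2$, that is, when it is a minimal polynomial dg-$S$-algebra in degree-1 odd and degree-2 even variables. Since the definition allows arbitrary positive degrees, the real condition is that $X$ is finite.

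For the forward direction, suppose $P = S[x_1,\dots,x_m,e_1,\dots,e_n] \xra{\simeq} A$ is a minimal polynomial model. Let $M$ be a dg-$A$-module with $H(M)$ finitely generated over $H_0(A)$. We view $M$ as a dg-$P$-module and build its minimal semifree resolution from a resolution over $S$ by a Shamash/Eisenbud-type construction: $S$ is regular, so $\k$ and $M$ have finite free resolutions over $S$, and adjoining the finitely many variables $e_j$ and $x_i$ produces a semifree $P$-resolution. Its rank in degree $d$ is bounded by the number of monomials of degree $\le d$ in the even variables $x_i$, times constants. This is polynomial of degree $\le m-1$, in the manner of Pollitz's argument for Koszul complexes. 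Minimality only lowers the ranks, so the Betti numbers grow polynomially. Bounded homology of $A$ ensures that the relevant $\Tor^A(M,\k)$ is computed by this model.

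For the converse, it suffices to test $M = \k$. We form the acyclic closure $A\langle Y\rangle$ of $A \to \k$ (again from the earlier existence/uniqueness result). Its underlying graded algebra is a divided-power/exterior algebra on $Y$, and it is a minimal resolution, so its Poincaré series is $\prod_i (1+t^{2i-1})^{\varepsilon_{2i-1}}/(1-t^{2i})^{\varepsilon_{2i}}$ in terms of the counts of variables. Polynomial growth of $\beta_i(\k)$ forces all $\varepsilon_{2i}$ with $i$ large to vanish and forces finiteness of the whole variable set. The key input is the dg-version of Halperin's theorem stated in the abstract: if $\varepsilon_i = 0$ for some $i \ge 3$ (or for all large even $i$), then $\varepsilon_j = 0$ for all $j \ge 3$ (suitably shifted for $A$ not concentrated in degree $0$). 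Granting this, the minimal model of $S \to A$ has finitely many variables, and by Theorem~\ref{intro:minmodels} it is a minimal polynomial dg-$S$-algebra, so $A$ is a derived complete intersection. Bounded homology is used here to ensure the minimal model variables correspond to the deviations of the acyclic closure, as in the comparison $\varepsilon_{i+1}(A) = \#X_{i}$ for the classical case.

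The main obstacle is the dg-Halperin vanishing theorem: showing that a gap in the deviations forces all higher ones to vanish. I would try to adapt Halperin's rational homotopy argument (or Avramov's via homotopy Lie algebras). One builds the homotopy Lie algebra $\pi(A)$ from the acyclic closure and shows that a central element of even degree yields a regular-type element. One then reduces $A$ by that element and inducts on the number of degree-2 deviations, until reaching the regular case $\varepsilon_{\ge 2} = 0$. The remaining care is in checking that the reduction step stays within local dg-algebras with bounded homology.
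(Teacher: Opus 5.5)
There is a genuine gap in your converse direction, and the counting in your forward direction is wrong.

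\textbf{Converse.} The product formula for $P^A_{\k}(t)$ gives only $\epsilon^A_{2i}(\k)=0$ for $i\gg 0$ (Lemma~\ref{largeevenderivations}). It says nothing about odd deviations, because factors $(1+t^{2i-1})$ cannot force growth. This is a real obstruction: Example~\ref{ex:binary} has bounded Betti numbers but infinitely many nonzero odd deviations.

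To close the gap you invoke a ``dg-Halperin theorem'': one vanishing deviation forces all higher ones to vanish. That statement is false for dg-algebras with bounded homology. For $B=\k[x_0]/(x_0^m)$ with $|x_0|=d$ even, Example~\ref{deviationgapexample} gives $\epsilon^B_{d+1}(\k)=\epsilon^B_{md+2}(\k)=1$, while every deviation strictly between them is zero. The paper's actual analogue of Halperin's theorem (Theorem~\ref{deviationsboundthm}) only propagates vanishing one step past a run of $s+1$ zeros, and the paper does not use it for this theorem.

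What you actually need is weaker: eventually vanishing even deviations force eventually vanishing odd deviations. This needs nilpotence. The paper proves (Theorem~\ref{odddeviationliftstoeven}) that if $A$ is derived nilpotent and $\epsilon^A_q(\k)\neq 0$ for some odd $q>1$, then $\epsilon^A_i(\k)\neq 0$ for some even $i>q$. The argument is Gulliksen's. Lemma~\ref{nilpotenthomology} makes the class $\overline{v_1}$ of $v_1=\partial(y_1)$ nilpotent in $H(L)$, say $\overline{v_1}^p=0$. If every variable in degrees $q<i\le pq-p+2$ were odd, the exact sequences for adjoining them would make multiplication by $\overline{v_1}$ injective in that range, forcing $H_0(L)=0$.

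This is exactly where bounded homology enters: truncation makes $A$ derived nilpotent, and Example~\ref{ex:binary} shows the hypothesis cannot be dropped. It does not enter, as you claim, in matching minimal-model variables with deviations. That matching, Proposition~\ref{quasi-isofibers}, needs no boundedness. It does need a proof, though, rather than the analogy you give: the paper builds compatible surjective quasi-isomorphisms $U(i)\to \k\otimes_{V(i-1)}V$.

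\textbf{Forward direction.} Building a semifree $P$-resolution of $M$ from an $S$-resolution ``by adjoining the variables'' requires higher homotopies for a general minimal polynomial dg-$S$-algebra, and you do not construct them. More seriously, your count is backwards. Under $n^S_i(A)=\epsilon^A_{i+1}(\k)$, the \emph{odd} variables $e_j$ of $P$ correspond to even deviations. These are divided power variables contributing factors $(1-t^{|e_j|+1})^{-1}$. The even $x_i$ contribute only bounded factors $(1+t^{|x_i|+1})$.

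For instance, $\k\llbracket s,t\rrbracket/(s^2,t^2)$ has minimal model $S[e_1,e_2]$ with no even variables, yet $\beta_i(\k)=i+1$. That contradicts your bound by monomials in the $x_i$.

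The paper's route avoids all this. Finitely many minimal-model variables means finitely many deviations, so $P^A_{\k}(t)$ is a finite product with poles at roots of unity. Then Lemma~\ref{poincarebound}, $P^A_M(t)\le \ell(t)P^A_{\k}(t)$, transfers polynomial growth from $\k$ to every $M$.
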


See Theorem~\ref{derivedcithm} for a stronger statement. We also obtain the following differential graded version of the Auslander-Buchsbaum-Serre Theorem (see Theorem~\ref{auslanderbuchsbaumserre}):

\begin{thm}
\label{thm:introABS}
The residue field of a local dg-algebra $A$ is perfect (Definition~\ref{defn:globaldim}) if and only if there is a quasi-isomorphism $S[x_1, \dots, x_n] \xra{\simeq} A \otimes_{A_0} \widehat{A_0}$, where the $x_i$ have positive even degree.
\end{thm}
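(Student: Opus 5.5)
We prove the two implications separately, after reducing to the case where $A_0$ is complete. The reduction uses that $A \to A \otimes_{A_0} \widehat{A_0}$ is flat and local with trivial closed fiber. Consequently, perfectness of $\k$ as a dg-$A$-module is unchanged by passing to $A \otimes_{A_0}\widehat{A_0}$. Perfect means $\k$ is finitely built from $A$, equivalently its minimal semifree resolution has finitely many basis elements. So we may assume $A_0$ is complete and write $S \onto A_0$ for a minimal Cohen presentation.

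For the "if" direction, suppose $P = S[x_1,\dots,x_n] \xra{\simeq} A$ with the $x_i$ of positive even degree and trivial differential. Since quasi-isomorphisms induce equivalences of derived categories preserving $\k$, it suffices to show $\k$ is perfect over $P$. Choose a regular system of parameters $t_1,\dots,t_d$ of $S$. The Koszul dg-algebra $K$ on $t_1,\dots,t_d,x_1,\dots,x_n$ over $P$ is a finite semifree dg-$P$-module. These elements form a regular sequence in $P$, an ordinary (bigraded) polynomial ring over a regular local ring. Hence $K \simeq P/(t,x) = \k$, and $\k$ is perfect.

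For the "only if" direction, I would use the minimal model machinery of Theorem~\ref{intro:minmodels}. Take a minimal model $P \xra{\simeq} A$, where $P$ is a minimal polynomial dg-$S$-algebra with variables $x_i$ (even) and $e_j$ (odd); in general $P$ may need infinitely many variables, constructed degree by degree. The deviations of $A$ are the numbers of variables in each degree. I would then invoke the dg-extension of Halperin's theorem from the body of the paper: via the acyclic closure of $A \to \k$, the Poincaré series of $\k$ factors as a product $\prod (1+t^{2i-1})^{\varepsilon_{2i-1}}/\prod(1-t^{2i})^{\varepsilon_{2i}}$ built from these deviations. Perfectness makes $\beta_i(\k)$ eventually zero. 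For a product with nonnegative exponents this forces the even-degree contributions to the resolution to vanish, and it forces the odd-degree deviations beyond the degree-one part coming from $S$ to be zero. Concretely, I would argue that the minimal model has no odd variables. Minimality puts $\del(e_j)$ in $\m P + (x,e)^2$. If some odd variable exists, the acyclic closure of $\k$ over $P$ must adjoin a divided power variable of even degree to kill the class associated to it, and this produces infinitely many nonzero Betti numbers. With no odd variables, minimality and degree reasons force $\del = 0$ on the $x_i$, giving $P = S[x_1,\dots,x_n]$. The count of $x_i$ is finite because $\k$ has finite Betti numbers and each $x_i$ contributes a basis element in the resolution.

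The main obstacle is this last step: rigorously showing that an odd variable in the minimal model forces infinitely many Betti numbers of $\k$. I would first try to compare the minimal semifree resolution of $\k$ over $A$ with the acyclic closure $P\langle Y\rangle$. The goal is to show that the acyclic closure is minimal, so its ranks compute $\beta_i(\k)$; this is where the existence and uniqueness of acyclic closures established in the paper enters. Given minimality, a divided-power variable of positive even degree contributes basis elements $y^{(k)}$ in every degree $k|y|$, so $\beta_i(\k)\neq 0$ for infinitely many $i$, contradicting perfectness.
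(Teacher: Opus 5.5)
You correctly identify the crux of ``only if'' as showing that an odd variable in the minimal model $V$ of $A$ over $S$ forces infinitely many Betti numbers of $\k$. However, your plan for it only covers the easy half. Minimality of the acyclic closure is Gulliksen's theorem, quoted as Theorem~\ref{acyclicclosureofkminimal}. The hard half is that an odd variable of $V$ forces an even-degree divided power variable in the acyclic closure of $\k$, and you assert this (``to kill the class associated to it'') with no mechanism. An odd variable $e$ of $V$ is not a cycle, so there is no evident class to kill. Your identification of deviations with the numbers of variables of $V$ in each degree is also not right: the true relation is shifted by one degree and corrected in degree one. The paper supplies this step through Proposition~\ref{quasi-isofibers}. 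The acyclic closure $U$ of $\k$ over $K(\m_S,V)$ satisfies $U(i)\simeq \k\otimes_{V(i-1)}V$. In the latter, the degree-$i$ variables of $V$ become a basis of $H_i$, so for $i\ge 1$ the closure $U$ has exactly $n^S_i(A)$ variables of degree $i+1$. Proposition~\ref{deviationscompare} then passes from $K(\m_{A_0},A)$ back to $A$. This gives $n^S_d(A)=\epsilon^A_{d+1}(\k)$ for $d\ge 2$. That excludes odd variables of degree $d\ge 3$, and, via Theorem~\ref{derivedcithm}, gives the finiteness of the even variables, which you also take for granted. Your reduction to complete $A_0$ and your Koszul-complex argument for ``if'' are fine.

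In degree one the claim is false, and so is the statement as written. Let $m$ and $n$ be the embedding dimensions of $A_0$ and $H_0(A)$. Proposition~\ref{deviationscompare} gives $\epsilon^{K(\m_{A_0},A)}_2(\k)=\epsilon^A_2(\k)+m-n$, hence $n^S_1(A)=\epsilon^A_2(\k)+m-n$. The case $i=1$ of Proposition~\ref{quasi-isofibers} is printed with $n-m$ instead, and the paper's own proof of this theorem needs that incorrect sign when the odd variable has degree one. A counterexample is $A=K(x,\k\llbracket x\rrbracket)$. It is a local dg-algebra and $A\to \k$ is a quasi-isomorphism, so $\k$ is perfect. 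But its minimal model over $S=\k\llbracket x\rrbracket$ is $A$ itself, which has a variable of degree $1$. Moreover, no $S[x_1,\dots,x_n]$ with even $x_i$ is quasi-isomorphic to $A$, since its $H_0$ is $S\neq \k$. Once the dictionary above is in place, what your argument actually proves is the following. Perfectness gives $\epsilon^A_{2i}(\k)=0$ for all $i$, so $n^S_1(A)=m-n$. Hence the differentials of the degree-one variables $e_1,\dots,e_c$ of $V$ are part of a regular system of parameters of $S$. It follows that $A\otimes_{A_0}\widehat{A_0}$ is quasi-isomorphic, through $V$, to $T[x_1,\dots,x_n]$, where $T=S/(\partial e_1,\dots,\partial e_c)\cong \widehat{H_0(A)}$ is regular and the $x_i$ have positive even degree. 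The statement as written is correct only under the extra hypothesis $\partial(A_1)\subseteq \m_{A_0}^2$, i.e.\ $m=n$.
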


In particular, a local dg-algebra $A$ whose residue field is perfect is an ordinary ring. As explained in~\cite[pp. 401]{shaul}, the special case of Theorem~\ref{thm:introABS} where~$A$ has bounded homology was proven, in slightly different guises, by J\o rgensen~\cite[Theorem 0.2]{Jo10}, Lurie~\cite[Lemma 11.3.3.3]{lurie}, and Yekutieli~\cite[Theorem 0.7]{yekutieli}. To the authors' knowledge, Theorem~\ref{thm:introABS} is the first Auslander-Buchsbaum-Serre-type theorem for local dg-algebras without bounded homology.

As a key ingredient in the proof of Theorem~\ref{thm:intromain}, we establish the existence and uniqueness of minimal models of dg-algebras in a host of new cases. Given a morphism $p \co A \to B$ of local dg-algebras (Definition~\ref{defn:local}), a \defi{minimal model of $B$ over $A$} is a quasi-isomorphism~$P \xra{\simeq} B$, where~$P$ is, roughly speaking, a generalized minimal polynomial dg-algebra, with coefficients in~$A$ and potentially infinitely many variables. See Section~\ref{sec:models} for the precise definition. For instance, in the definition of a derived complete intersection above,~$P$ is a minimal model of~$A \otimes_{A_0} \widehat{A_0}$ over~$S$. Minimal models (and their divided power algebra counterparts, acyclic closures) are an adaptation from topology of Sullivan models, and they play a key role in Avramov-Halperin's ``looking glass" connecting commutative algebra with rational homotopy theory~\cite{lookingglass}. The existence and uniqueness of minimal models is well-known in the case where the target $B$ is an ordinary local ring~\cite{avramov}. See also~\cite[Theorem 14.12]{FHT}, where the existence and uniqueness of minimal models (which they call relative Sullivan models) is established for certain morphisms of dg-algebras whose $0^{\th}$ cohomology is a characteristic zero field; the uniqueness portion of the proof of~\cite[Theorem 14.12]{FHT} relies on the characteristic zero field assumption.\footnote{In more detail: the proof of the uniqueness statement in \cite[Theorem 14.12]{FHT} depends on~\cite[Proposition 14.6]{FHT}, whose proof uses \cite[Lemma 12.5]{FHT}. This latter result requires the characteristic zero field assumption.} Our key technical result en route to Theorem~\ref{thm:intromain} is the following~(see~Corollary~\ref{minmodelscor}, and see Theorem~\ref{thm:minmodelexist} for a stronger statement):

\begin{thm}\label{intro:minmodels}
Let $p \co A \to B$ be a morphism of local dg-algebras, and assume $p$ induces a surjection~$H_0(A) \onto H_0(B)$. The morphism $p$ admits both a minimal model and an acyclic closure, and they are unique up to isomorphism of dg-algebras.
\end{thm}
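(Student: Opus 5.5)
Existence will follow the classical Tate construction, run in the derived setting. Since $H_0(A)\onto H_0(B)$, I adjoin variables to $A$ degree by degree to build a semifree extension $A[X]\to B$ that is eventually a quasi-isomorphism. At stage $n$ I will have a dg-algebra $A[X_{\le n}]$, with $X_{\le n}$ concentrated in degrees $\le n$. The map $A[X_{\le n}]\to B$ should be surjective on $H_i$ for $i\le n$ and injective on $H_i$ for $i<n$. Choose a minimal set of cycles whose classes generate $\ker H_n(A[X_{\le n}]\to B)$, taken modulo the relevant maximal ideal, together with the cokernel of $H_{n+1}$. Then adjoin divided power variables (for the acyclic closure) or polynomial/exterior variables (for the minimal model) in degree $n+1$ that kill these cycles and hit these classes. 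Passing to the colimit gives a quasi-isomorphism. The local hypotheses on $A$ and $B$ are used here: homology is finitely generated in each degree over $H_0$, and Nakayama applies. I will check minimality, $\partial(X)\subseteq \m A[X]+(X)^2$ (respectively the divided-power decomposables), by showing that each new variable's boundary reduces to zero in the indecomposables modulo $\m$. This is what minimality of the chosen generating sets should guarantee. The surjectivity on $H_0$ lets me start with no degree-$0$ variables.

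For uniqueness I will follow Avramov's approach for ring targets and avoid the characteristic-zero lifting of FHT. First, acyclic closures. The key characterization is that a semifree divided power extension $A\langle X\rangle\to B$ is an acyclic closure exactly when the induced differential on $A\langle X\rangle\otimes_A \k$ is zero after passing to indecomposables. Equivalently, the number of variables in each degree equals $\dim_\k$ of a homotopy-invariant group, such as $\pi_n$, a deviation defined via $\Tor^A(B,\k)$. Given two acyclic closures, the lifting property of semifree extensions against surjective quasi-isomorphisms (after replacing one by a surjection via a mapping-path construction) yields a dg-algebra map between them over $A$ and under $B$. This map is a quasi-isomorphism, and I will show it induces an isomorphism on indecomposables $\k\otimes X$. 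Minimality means the indecomposables are computed by homotopy invariants, so a quasi-isomorphism between minimal objects induces a map on them that is a quasi-isomorphism between complexes with zero differential, hence an isomorphism. An isomorphism on indecomposables, together with filtration by degree and Nakayama (all degrees positive, so induction on degree works), makes the map a dg-algebra isomorphism. The same argument works for minimal models, using polynomial variables and the lifting property in that category.

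I expect the main obstacle to be the existence of the comparison map together with the Nakayama step, when $B$ is a dg-algebra rather than a ring. Lifting against $B$ requires a surjective model of $B$, and the degree-wise Nakayama argument needs finiteness, or at least completeness, of $A[X]$ in each degree. I would handle this by working degree by degree. Each graded piece of $A[X]$ is a finite free $A$-module in each fixed degree only if $X_0$ is empty and $A$ is nonnegatively graded, which the $H_0$-surjectivity hypothesis provides. For the divided-power version I would also check that the relevant lifting property holds for $\Gamma$-variables in positive characteristic, which is where FHT's argument breaks down. To get it, I would build lifts variable by variable, choosing preimages of boundaries of images.
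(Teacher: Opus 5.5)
Your existence construction follows the same Tate-style induction as the paper. The uniqueness argument, however, has a genuine gap at the comparison map, and behind it sits a divided-power obstruction that your proposed fix (``build lifts variable by variable, choosing preimages of boundaries'') cannot overcome.

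\textbf{The divided-power obstruction.} A dg-algebra map out of $A\langle y\rangle$ with $|y|$ even must assign compatible values to every $y^{(k)}$, so the image of $y$ needs a system of divided powers. In particular, if $\on{char}\k=\ell>0$ then $f(y)^\ell=\ell!\,f(y^{(\ell)})=0$. Nothing guarantees this when the target is a general $B$, or $U'=A\langle Y'\rangle$ with $A$ lacking divided powers. This is not merely a gap in your proof. Take $\on{char}\k=\ell>0$, $A=\k$, and $B=\k[x]$ with $|x|=2$ and zero differential. Every element $u$ of positive even degree in $\k\langle Y\rangle$ satisfies $u^\ell=0$. A quasi-isomorphism $q\colon \k\langle Y\rangle\to B$ would need a degree-$2$ cycle $u$ with $q(u)=x$, yet $x^\ell\neq 0$. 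So no acyclic closure exists. Your existence step (adjoining divided-power variables that map to prescribed $b_j$) therefore fails, as does the paper's step $p_{i+1}(x_j)=b_j$. The acyclic-closure half of the statement needs an extra hypothesis, such as residue characteristic $0$ or a target like $\k$.

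\textbf{The polynomial case.} The lifting route is also blocked in characteristic $\ell$. Making $q'$ surjective by adjoining contractible pairs $\partial w=v$ fails, because for $|w|$ even the algebra $\Lambda(v)\otimes\k[w]$ is not acyclic: $w^\ell$ is a cycle that is not a boundary. So you get no trivial fibration to lift against. Your final Nakayama step is sound only for comparison maps that respect divided powers; otherwise, for example, $y^{(\ell)}\mapsto 0$ is a legitimate algebra map.

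\textbf{How the paper proceeds.} The paper avoids comparison maps and homotopy invariance of indecomposables altogether. It inducts on $i$. The isomorphism $\psi_{i-1}\colon U(i-1)\cong U'(i-1)$ identifies $\cone(p_{i-1})$ with $\cone(p'_{i-1})$. By Lemma~\ref{lem:mingens}, the degree-$i$ variables of the two models give minimal generating sets $Z_i$ and $Z'_i$ of $H_i$ of these cones. An invertible matrix over $A_0$ relating them then defines $\psi_i$ on the new variables. (That matching holds only up to boundaries, so correction terms in $U'(i-1)$ must be carried along, but no lifting against $B$ is required.)

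\textbf{Minimality in your existence step.} The same lemma supplies the mechanism your existence sketch leaves as ``should guarantee.'' The model is minimal exactly when the new variables give minimal generators of the single $A_0$-module $H_{n+1}(\cone(p_n))$. This module is an extension of $\ker H_n(p_n)$ by $\on{coker}H_{n+1}(p_n)$. Choosing generators of the kernel and cokernel separately, as your wording suggests, can break minimality. For example, take $A=\k\llbracket t\rrbracket/(t^3)$ and $B=A[e\mid\partial e=t^2]$. Then $H_1(\cone(p))\cong A$ is cyclic. Separate choices, however, adjoin both $x_1$ with $\partial x_1=t^2$ and $x_2$ mapping to $te$. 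The cycle $tx_1-(\text{unit})\cdot x_2$ then has to be killed by a degree-$2$ variable whose boundary has a unit coefficient.
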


Theorem~\ref{thm:intromain} does not directly recover Gulliksen's Theorem (Theorem~\ref{gulliksen}) in the special case where $A$ is an ordinary local ring. Indeed, recovering Gulliksen's Theorem from our work requires a more detailed analysis of the degrees of the variables appearing in the minimal model of a given local dg-algebra. We carry out this study in Section~\ref{sec:deviatons}. To set the stage, we must introduce a bit more notation. Given a local dg-algebra $A$ and a minimal Cohen presentation~$S \onto \widehat{A_0}$, we let~$n_i^S(A)$ denote the number of degree $i$ variables in the minimal model of $A \otimes_{A_0} \widehat{A_0}$ over $S$. We prove in~Proposition~\ref{quasi-isofibers} that these numbers are closely related to the \defi{deviations}~$\epsilon_i^A(\k)$ of $A$ (Definition~\ref{defn:deviation}), generalizing a well-known result in the setting of local rings~\cite[Theorem 7.2.6]{avramov}. Our main result concerning the calculation of the values $n_i^S(A)$ is the following (see Theorem~\ref{deviationsboundthm}):

\begin{thm}
\label{thm:introdev}
Suppose there is a surjective ring homomorphism~$S \onto A_0$, where $S$ is a regular local ring. Assume $A$ has bounded homology, and let~$s \ce \max\{i \delimit H_i(A) \ne 0\}$. 
\begin{enumerate}
\item If $n^S_{t+1}(A)=\cdots=n^S_{t+s+1}(A)=0$ for some $t > s$, where $t$ is even, then $n^S_{t}(A)=0$.
\item If $n^S_{t+1}(A)=\cdots=n^S_{t+s+1}(A)=0$ for some $t > s + 1$, where $t$ is odd, then $n^S_{t-1}(A)=0$. 
\end{enumerate}
\end{thm}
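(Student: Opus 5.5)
We would argue via the minimal model $P \xra{\simeq} A \otimes_{A_0}\widehat{A_0}$ over $S$ supplied by Theorem~\ref{intro:minmodels}, taking $P = S[X]$ with $n_i \ce n_i^S(A)$ variables in degree $i$. The idea is to mimic Halperin's argument for the vanishing of deviations of local rings, with the bound $s$ on homology replacing the degree-$0$ concentration used in the classical case. First, by Proposition~\ref{quasi-isofibers}, we translate the numbers $n_i$ into deviations of $A$. The relevant object is then the minimal model $F \ce P\otimes_S \k = \k[X]$ (with induced differential, decomposable by minimality). Its homology is $\Tor^S(A,\k)$ up to completion, which vanishes above degree $s+\dim S$ by boundedness of $H(A)$ and regularity of $S$. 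Equivalently, and more convenient for a Halperin-type argument, we work with the homotopy Lie algebra $\pi(A)$, whose degree-$i$ piece has dimension governed by $\epsilon_i^A(\k)$.

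The main step is a rigidity statement: a gap of length $s+1$ in the variables above degree $t$ forces $n_t=0$. For $t$ even we proceed as follows. Suppose $x$ is a variable of degree $t$. Since $\del$ of any variable of degree $\le t+s+1$ beyond $t$ vanishes (there are none), and $F$ has no variables in degrees $t+1,\dots,t+s+1$, we consider the powers $x^k$. We would like to show that $x^k$ is a cycle not a boundary for all $k$, contradicting bounded homology of $F$ (or of $A\otimes^{\LL}_S \k$, which is bounded by $s+\dim S$). Actually a cleaner route is to split off $x$. Let $P' = S[X_{<t}\cup X_t]$ be the subalgebra on variables of degree $\le t$. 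This is closed under $\del$, and $P = P'[X_{> t+s+1}]$. Then $H_i(P') \cong H_i(P)=H_i(A)$ for $i \le t+s+1$, because the added variables only affect homology in degrees $\ge t+s+1$ (more precisely, $\ge t+s+2$ on cycles, with boundary effect in degree $t+s+1$). With $t>s$, this gives $H_i(P')=0$ for $s< i\le t+s$. We then pass to $P'' = S[X_{<t}]$, so that $P' = P''[X_t]$ is a semifree extension by even variables $x_j$ whose boundaries $\del x_j$ are cycles in $P''$ of degree $t-1$. Filtering by polynomial degree in the $x_j$ gives a spectral sequence, and we analyze it in low degrees, which are exactly the degrees controlled by the gap. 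The classes $[\del x_j]\in H_{t-1}(P'')$ must be nonzero and independent mod decomposables, by minimality. We would then show that the element $x_j\cdot$(a cycle representing a nonzero class in the top degree $\le s$ of $H(P'')$, e.g.\ using $H_0$ after reduction mod $\m$) yields nonzero homology of $P'$ in a degree in $(s, t+s]$. That contradicts the vanishing just established, so $n_t=0$.

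For $t$ odd, the variables of degree $t-1$ are even and the variables of degree $t$ are odd. Here we run the analogous argument with the exterior variables. If $e$ has odd degree $t$ with $\del e$ not decomposable, we use Halperin's trick: when no variables exist in degrees $t+1,\dots,t+s+1$, the odd variables of degree $t$ cannot kill the classes $y^2$ for even $y$ of degree $t-1$ (degree $2t-2 > t+s+1$ since $t>s+1$... we need to check this degree bookkeeping), forcing $n_{t-1}=0$. The hypothesis $t>s+1$ is exactly what guarantees that the relevant products land in degrees where homology is known to vanish.

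\textbf{Main obstacle.} The hard part is the nonvanishing of the test class in the middle step: showing that products with the new variables survive in homology. Halperin's original proof uses a comparison with the homotopy Lie algebra and a derivation argument, namely that $\pi^{\ge 2}$ generates by free-ness in high degrees. We would first try to adapt that via the Lie algebra structure on $\pi(A)$ from Proposition~\ref{quasi-isofibers}: show that a nonzero element of $\pi^t$ has nonzero bracket or restricted square landing in $\pi^{t+j}$ for some $1\le j\le s+1$. This is where the boundedness by $s$ must enter, through a Félix--Halperin--Thomas-type depth argument.
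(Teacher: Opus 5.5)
Your proposal has a genuine gap, and it sits exactly where you put the ``main obstacle'': nothing in the sketch produces a homology class that is forced to survive when $n^S_t(A)\neq 0$, and the candidates you suggest do not work.

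\begin{itemize}
\item In $\k[X]=P\otimes_S\k$, a degree-$t$ variable $x$ need not be a cycle: its differential is decomposable but need not vanish. So $x^k$ need not be a cycle either.
\item Likewise $x_jc$ need not be a cycle in $P'$, since its boundary is $\del(x_j)c$. You give no mechanism that would make a corrected cycle nonzero anywhere in the window $(s,t+s]$.
\item The vanishing of $H(P')$ in that window is essentially a restatement of the gap hypothesis. It is not, by itself, a source of contradiction.
\item For $t$ odd, the bookkeeping you flag does fail: $2t-2>t+s+1$ means $t>s+3$. So the cases $t=s+2$ and $t=s+3$, which the statement covers, are out of reach of that idea.
\item The homotopy Lie algebra route presupposes a Lie structure on the deviations of a local dg-algebra, and a depth theorem for it. Proposition~\ref{quasi-isofibers} gives only numerical information, and neither ingredient is available here. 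As written, this is a plan rather than an argument.
\end{itemize}

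The missing idea is a derivation argument on a model with divided powers. For $t$ even, the paper's argument runs as follows.

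\begin{itemize}
\item Take the minimal model $V$ of $A$ over $S$ with switching degree $t$, so the degree-$t$ variables $y_1,\dots,y_m$ are divided power variables. By Proposition~\ref{switchingdegree}, $n_i(V)=n^S_i(A)$ for $i<2t$ and $n_{2t}(V)\le n^S_{2t}(A)$. Since $t>s$, the gap in degrees $t+1,\dots,t+s+1$ persists in $V$.
\item On $V(t)$, define the degree $-t$ derivation $\theta$ that vanishes on $V(t-1)\langle y_1,\dots,y_{m-1}\rangle$ and satisfies $\theta(y_m^{(r)})=y_m^{(r-1)}$.
\item By Lemma~\ref{derivationlift}, $\theta$ extends over a variable $y$ of degree $i$ once the cycle $\theta(\del y)$, of degree $i-t-1$, is a boundary. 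For $t<i\le t+s+1$ this degree lies in $[0,s]$, where $H(A)$ may be nonzero; these are exactly the variables the gap excludes. For $i\ge t+s+2$, one has $H_{i-t-1}(V(i-1))\cong H_{i-t-1}(A)=0$.
\item So $\theta$ extends to all of $V$ and descends to $\bar\theta$ on the fiber $\k\otimes_{V(t-1)}V$. The fiber is taken over $V(t-1)$, not over $S$, and this is what turns $y_m$ into a cycle.
\item In the fiber, every $1\otimes y_m^{(j)}$ is a cycle with $\bar\theta^{\,j}(1\otimes y_m^{(j)})=1\otimes 1$, so none of them is a boundary. This contradicts the boundedness of $H(\k\otimes_{V(t-1)}V)$, which the paper extracts from Proposition~\ref{quasi-isofibers} together with Lemmas~\ref{koszulnilpotent} and~\ref{nilpotenthomology}.
\end{itemize}

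So the contradiction comes from infinitely many surviving classes in the fiber, not from one class in a finite window. The divided powers matter: for a polynomial variable, $\theta(x^r)=rx^{r-1}$ gives $\theta^j(x^j)=j!$, which is $0$ in characteristic $p$ once $j\ge p$.

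For $t$ odd, the paper uses switching degree $t-1$ and a derivation of degree $-(t-1)$ with values in $M=K(\m_S,V)/(e_1,\dots,e_c)^2$.

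\begin{itemize}
\item The obstructions coming from the degree-$t$ variables are elements of $\m_S$. This follows from minimality and $H_{t-1}(A)=0$, which needs $t>s+1$. These elements are boundaries in $M$.
\item $H(M)$ vanishes above degree $s+1$, so once more the gap removes exactly the obstructed variables.
\end{itemize}
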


Theorem~\ref{thm:introdev} nearly immediately implies Gulliksen's Theorem: see Corollary~\ref{cor:gulliksen}. In fact, it implies a stronger result, due to Halperin~\cite[Theorem B]{halperin87}, that says a local ring $R$ is a complete intersection if and only if any one of its deviations vanishes. We therefore view Theorem~\ref{thm:introdev} as an extension to homologically bounded local dg-algebras of this theorem of Halperin. 

\medskip
We now give a brief outline of the paper. In Section \ref{sec:prelims}, we provide background on semifree extensions of dg-algebras and semifree resolutions of dg-modules. Section~\ref{sec:models} is devoted to proving~Theorem \ref{intro:minmodels}. We establish in Section~\ref{sec:nilpotent} some technical results concerning derived nilpotent dg-algebras (Definition~\ref{defn:nilpotent}) that we need for the proofs of Theorems~\ref{thm:intromain} and~\ref{thm:introdev}.  In Section~\ref{sec:polynomial}, we prove Theorems~\ref{thm:intromain} and \ref{thm:introABS}. Finally, in Section \ref{sec:deviatons}, we explore applications of the results in Sections~\ref{sec:models}--\ref{sec:polynomial} to the vanishing of the deviations of $A$, culminating in the proof of Theorem~\ref{thm:introdev}.

\subsection*{Acknowledgments} We thank Josh Pollitz and Mark Walker for helpful discussions. We are also grateful to Benjamin Briggs for noticing an error in a previous version of this paper and bringing to our attention Example~\ref{ex:binary}.

\section{Preliminaries}\label{sec:prelims}

\begin{notation}
\label{notation}
Throughout, $R$ denotes a commutative ring. All complexes are indexed homologically. If $C$ is a complex of $R$-modules with differential $\del_C$, then its homological shift $C[i]$ is the complex with $C[i]_j = C_{i + j}$ and differential $(-1)^i\del_C$. We denote the homological degree of an element $c \in C$ by $|c|$. Given a morphism $f \co C \to D$ of complexes, we express $\cone(f)$ as the sum~$C[-1] \oplus D$ with differential $\begin{pmatrix} -\del_C& 0 \\ -f & \del_D\end{pmatrix}$. All modules are left modules, unless indicated otherwise. But since all of the results in this paper concern  graded commutative algebras, this is a minor point: see Remarks~\ref{remarks}(2). We abbreviate ``dg-$\Z$-algebra" to simply ``dg-algebra" throughout.  
\end{notation}

Let $A$ be a dg-$R$-algebra with differential $\del_A$. 
\subsection{Semifree extensions}
We fix notation for adjoining a symmetric or exterior variable to $A$:

\begin{construction}[\cite{avramov} Constructions 2.1.7, 2.1.8] \label{const:poly}
Let~$z \in A_d$ be a cycle. We let $A[x]$ denote the following dg-algebra: 
\begin{enumerate}
\item If $d$ is odd, then $A[x] \ce A \otimes_R R[x]$, where $|x| = d + 1$, and
$$
\del_{R[x]}(a \otimes x^i)  = \del_A(a) \otimes x^i + (-1)^{|a|} iaz \otimes x^{i-1}.
$$ 
\item If $d$ is even, then $A[x] \ce A \otimes_R \bigwedge_R(x)$, where $|x|=d + 1$, $\bigwedge_R(x)$ denotes the exterior algebra over $R$ on the variable $x$, and 
$$
\del_{A[x]}(a_0 \otimes 1 + a_1 \otimes x) = (\del_A(a_0) + (-1)^{|a_1|}a_1z)\otimes 1 + \del_A(a_1) \otimes x.
$$
\end{enumerate}
When we wish to emphasize the dependence on the cycle $z$, we write $A[x]$ as $A[x \delimit \del_{A[x]}(x) = z]$.
\end{construction}

\begin{defn}
\label{defn:semifree}
A \defi{semifree polynomial extension of $A$} is a dg-algebra obtained by iteratively adjoining a (possibly infinite) collection of variables to $A$ as in Construction~\ref{const:poly}. If $B$ is a semifree polynomial extension of~$A$, and $X$ is the set of all variables adjoined in construction of~$B$, we write~$B = A[X]$. Let $X_i \ce \{x \in X \text{ : } |x| = i\}$. We set $X_{\le i} \ce \bigcup_{j \le i} X_j$, and similarly for~$X_{\ge i}$.
\end{defn}

\begin{remark}
A polynomial dg-algebra, as defined in the introduction, is a semifree polynomial extension $R[X]$ such that $|X|< \infty$ and $X = X_{\ge 1}$. 
\end{remark}

\begin{remark}
Semifree polynomial extensions satisfy a lifting property exhibiting them as the differential graded analogue of free algebras: see \cite[Proposition 2.1.9]{avramov}.
\end{remark}

\begin{example}
\label{ex:poly1}
If $A = A_0$ is a commutative ring, and $z \in A$, then the dg-algebra $A[x]$ obtained from $z$ is the Koszul complex on~$z$.
\end{example}

\begin{example}
\label{ex:poly2}
Suppose $S = k\llbracket t_1, \dots, t_n \rrbracket$, and $R = S/(f)$, where $f$ is a nonzero element in the square of the maximal ideal $\m = (t_1, \dots, t_n)$. Choose $g_1, \dots, g_n \in \m$ such that $f = t_1g_1 + \cdots + t_ng_n$. Let~$A$ be the Koszul complex on $t_1, \dots, t_n$ over $R$, and let $z \in A_1$ be the cycle $(g_1, \dots, g_n)$. If~$\on{char}(\k) = 0$, then the dg-algebra $A[x]$ obtained from $z$ is the minimal $R$-free resolution of $\k$. When $\on{char}(\k) \ne 0$, the minimal free resolution of $\k$ is a semifree extension of $A$ involving a divided power variable: see Example~\ref{ex:shamash}. 
\end{example}

We now fix notation for adjoining a divided power variable to $A$. Let $R \langle y \rangle$ denote the divided power algebra over $R$ on a variable $y$ of positive even degree.

\begin{construction}[\cite{avramov} Construction 6.1.1]
\label{const:divided}
Let $z \in A_d$ be a cycle, where $d \ge 0$. We let $A\langle y \rangle$ denote the following dg-algebra:
\begin{enumerate}
\item If $d$ is odd, then $A \langle y \rangle \ce A \otimes_R R\langle y \rangle$, where $|y| = d + 1$, and
$$
\del_{A\langle y\rangle}(a \otimes y^{(i)}) = \del_A(a) \otimes y^{(i)} + (-1)^{|a|} az \otimes y^{(i-1)}.
$$ 
\item If $d$ is even, then $A\langle y \rangle \ce A[y]$ (see Construction~\ref{const:poly}). 
\end{enumerate}
As in Construction~\ref{const:poly}, we also sometimes write $A\langle y \rangle$ as $A\langle y \delimit \del_{A\langle y \rangle}(y) = z\rangle$.
\end{construction}

\begin{remark}
Semifree $\Gamma$-extensions satisfy a lifting property exhibiting them as the free objects in the category of dg-algebras with divided powers: see \cite[Lemma 1.7.8]{GL69}.
\end{remark}

\begin{defn}
A \defi{semifree $\Gamma$-extension of $A$} is a dg-algebra obtained by iteratively adjoining divided power variables to $A$ as in Construction~\ref{const:divided}. If $B$ is a semifree $\Gamma$-extension of $A$, and $Y$ is the set of all variables adjoined in the process of constructing $B$, we write $B = A\langle Y \rangle$. The sets~$Y_i$,~$Y_{\ge i}$, and $Y_{\le i}$ are defined as in Definition~\ref{defn:semifree}.
\end{defn}

\begin{example}
\label{ex:shamash}
Let $A$ and $z$ be as in Example~\ref{ex:poly2}, but allow $\on{char}(\k)$ to be arbitrary. The dg-algebra~$A\langle y \rangle$ obtained from $z$ is the minimal $R$-free resolution of $\k$. This resolution is an instance of the \defi{Shamash construction}: see e.g. \cite[Section 4.1]{EP16} for background. 
\end{example}

\begin{defn}
A \defi{semifree extension of $A$} is a dg-algebra $A[X]\langle Y \rangle$ obtained by iteratively applying to $A$ either Construction~\ref{const:poly} or Construction~\ref{const:divided}, or both. 
\end{defn}

\subsection{Semifree resolutions}
\label{subsec:semifree}
\begin{defn}
\label{defn:local}
We say $A$ is \defi{graded commutative} if, given homogeneous elements $a, a' \in A$, we have~$aa' = (-1)^{|a||a'|}a'a$, and $a^2 = 0$ when $|a|$ is odd. The dg-$R$-algebra~$A$ is called \defi{local} if $A$ is graded commutative and nonnegatively graded, $A_0$ is a Noetherian local ring,~$H_0(A) \ne 0$, and each~$H_i(A)$ is finitely generated over $H_0(A)$. Given local dg-$R$-algebras $A$ and~$B$ with homogeneous maximal ideals $\m_A$ and $\m_B$, a \defi{morphism of local dg-algebras $f \co A \to B$} is a map of dg-$R$-algebras satisfying~$f(\m_A) \subseteq \m_B$. We refer to a local dg-$\Z$-algebra $B$ as simply a \defi{local dg-algebra}. In this case,  $\del_B$ is $B_0$-linear, and so $B$ is in fact a dg-$B_0$-algebra. 
\end{defn}

\begin{remarks} 
\label{remarks}
We record the following observations:
\begin{enumerate} 
\item If $A$ is a local dg-algebra, then a semifree extension $A[X]\langle Y \rangle$ is a local dg-algebra if and only if $X = X_{\ge 1}$, $Y=Y_{\ge 1}$ and $|X_i|,|Y_i| < \infty$ for all $i$.
\item If $M$ is a left dg-module over a graded commutative ring $A$, then $M$ is also a right dg-$A$-module with action $m \cdot a = (-1)^{|m||a|}a \cdot m$.  
\end{enumerate}
\end{remarks}

\begin{example}[\cite{avramov} Proposition 2.1.4]
If $R$ is a Noetherian local ring, $I$ is a proper ideal in~$R$, and $A$ is an $R$-free resolution of $R/I$ such that $A_i = 0$ for $i > 3$, then $A$ may be equipped with the structure of a local dg-$R$-algebra. 
\end{example}

Let $A$ be a local dg-$R$-algebra with maximal ideal $\m_A$.

\begin{defn}
\label{def:minimal}
A dg-$A$-module $F$ is \defi{semifree} if $F$ is free as an~$A$-module, and $F_i = 0$ for $i \ll 0$. If $M$ is a dg-$A$-module, a \defi{semifree resolution of $M$} is a quasi-isomorphism $F \xra{\simeq} M$, where $F$ is semifree. A semifree resolution $F \xra{\simeq} M$ is called \defi{minimal} if $\del_F(F) \subseteq \m_AF$. 
\end{defn}

\begin{prop}[\cite{AINS19} Proposition B.7, Corollary B.8]
\label{prop:sfres}
Let $M$ be a dg-$A$-module such that there exists $m \in \Z$ satisfying $H_i(M) = 0$ for $i < m$, and such that $H_i(M)$ is a finitely generated $A_0$-module for all $i \in~\Z$. The dg-module $M$ admits a minimal semifree resolution $F$, and this resolution is unique up to isomorphism of dg-modules. Writing $F = \bigoplus_{i \in \Z} A[-i]^{\beta_i(M)}$, we have~$\beta_i(M) < \infty$ for all $i$, and~$\beta_i(M) = 0$ for~$i < m$. 
\end{prop}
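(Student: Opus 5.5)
We build the minimal semifree resolution by the standard inductive procedure, adding free generators one homological degree at a time, starting in degree $m$. Minimality is enforced at each stage by reducing modulo $\m_A$ and using Nakayama's lemma over the Noetherian local ring $A_0$. Uniqueness will come from showing that any quasi-isomorphism between minimal semifree dg-modules is an isomorphism.

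\emph{Existence.} First, by a truncation or shift argument we may assume $m=0$ and that $M$ is replaced by a quasi-isomorphic dg-module. We then construct semifree dg-modules $F^{(0)}\subseteq F^{(1)}\subseteq\cdots$ together with maps $\phi_n\colon F^{(n)}\to M$. Each $F^{(n)}$ is generated in degrees $\le n$, and $\phi_n$ induces an isomorphism on $H_i$ for $i<n$ and a surjection on $H_n$.

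To pass from stage $n$ to stage $n+1$, note that because $A$ is nonnegatively graded and each $H_i(A)$ is finitely generated over $H_0(A)$, the $H_i$ of a finitely generated semifree module are finitely generated over $A_0$. The homology of $\cone(\phi_n)$ is therefore finitely generated in each degree, and it vanishes below degree $n+1$. We then adjoin a minimal set of basis elements in degree $n+1$. Their boundaries are chosen to kill the kernel of $H_n(F^{(n)})\to H_n(M)$, and their images are chosen to hit generators of $H_{n+1}(M)$ that are not yet hit.

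We choose these generators minimally, meaning their images form a basis of the relevant $\k$-vector space obtained after applying $-\otimes_{A_0}\k$. Some care is needed to arrange $\del(F)\subseteq \m_A F$: a new generator whose boundary has a unit coefficient on an old generator would be a redundant cancelling pair. By Nakayama's lemma, selecting a minimal generating set of the cokernel/kernel module at each step avoids this. Taking $F=\bigcup_n F^{(n)}$ gives a resolution with $\beta_i<\infty$ and $\beta_i=0$ for $i<m$.

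The main obstacle is this minimality bookkeeping. Killing the kernel in degree $n$ can itself create new homology, so the inductive invariant has to be phrased in terms of $\cone(\phi_n)$. The right criterion to use is that $F$ is minimal if and only if $F\otimes_A \k$ has zero differential, and that the number of generators needed in degree $n+1$ equals $\dim_\k H_{n+1}(\cone(\phi_n)\otimes^{\LL}_A\k)$ in the lowest nonvanishing degree. With this criterion, minimal choices in the lowest degree never introduce unit coefficients.

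\emph{Uniqueness.} Let $F,G$ be two minimal semifree resolutions. Semifreeness gives lifting, so there are comparison maps $f\colon F\to G$ and $g\colon G\to F$ that are quasi-isomorphisms and are mutually inverse up to homotopy. Apply $-\otimes_A\k$. By minimality both complexes have zero differential, and homotopic maps agree modulo $\m_A$ because the homotopy contributes only terms of the form $\del h+h\del$, which land in $\m_A$. Hence $f\otimes\k$ is an isomorphism in each degree. Since each $F_{\le i}$ is finitely generated and $F$ is bounded below, a degree-by-degree Nakayama argument shows that $f$ is an isomorphism of graded $A$-modules, and hence of dg-modules. This also shows that $\beta_i(M)=\dim_\k H_i(M\otimes^{\LL}_A\k)$ is well defined.
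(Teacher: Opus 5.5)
Your proposal is correct and is the standard argument. The paper does not reprove this statement (it cites \cite{AINS19}), but its own proofs of Lemma~\ref{lem:mingens} and Theorem~\ref{thm:minmodelexist} carry out exactly your construction in the algebra setting, adjoining basis elements indexed by a minimal generating set of $H_{n+1}(\cone(\phi_n))$, and your uniqueness argument is the usual one: a comparison quasi-isomorphism, reduction modulo $\m_A$, then Nakayama applied degree by degree to the finitely many basis elements in each degree (this is needed since $A_i$ need not be finite over $A_0$).

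The one step you only assert, that minimal choices never create unit coefficients, is closed as in Lemma~\ref{lem:mingens}(3). Suppose a new degree-$(n+1)$ generator $u$ had $\del u=\sum_j r_jv_j+w$, where the $v_j$ are the degree-$n$ basis elements, $w\in F^{(n-1)}$, and some $r_j\notin\m_{A_0}$. Then $\del_{\cone(\phi_{n-1})}(w,\phi(u))=\sum_j r_j(\del v_j,\phi(v_j))$, which contradicts the minimality of the generators chosen at stage $n$. This is also why you must choose minimal generators of the extension $H_{n+1}(\cone(\phi_n))$ itself, rather than of the kernel and cokernel separately as your first paragraph might suggest.
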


\begin{defn}
If $M$ is a dg-$A$-module as in Proposition~\ref{prop:sfres}, then the values $\beta_i(M)$ are called the \defi{Betti numbers of $M$}. We let $P_M(t)\ce \sum_{i \in \Z} \beta_i(M)t^i$ denote the \defi{Poincar\'{e} series of $M$ over $A$}.
\end{defn}

\begin{remark}
\label{rem:truncation}
We record here a technical fact that will be useful later on: if $M$ is as in Proposition~\ref{prop:sfres}, and $H_j(M) = 0$ for $j \gg 0$, then there is a dg-$A$-module $N$ such that each $N_i$ is finitely generated over $A_0$, $N_i = 0$ for $|i| \gg 0$, and there is a quasi-isomorphism $M \xra{\simeq} N$. Indeed, one may take $N$ to be an appropriate smart truncation of the minimal semifree resolution of $M$.
\end{remark}

\section{Minimal models and acyclic closures}
\label{sec:models}

We will use the following setup throughout the rest of the paper:

\begin{setup}
\label{setup}
Let $A$ be a local dg-algebra, $\m_A$ and $\m_{A_0}$ the maximal ideals of $A$ and $A_0$, and~$\k$ the residue field of $A$ (and $A_0$). 
\end{setup}

The following definition is an extension to morphisms of dg-algebras of a notion introduced by Halperin in~\cite[pp. 648]{halperin87}.

\begin{defn}
\label{defn:switch}
 Let $p:A \to B$ be a map of local dg-algebras, and let $s \in \Z_{\ge 0} \cup \{\infty\}$. A \defi{model for~$p$ with switching degree $s$} is a local quasi-isomorphism $q \co U \xra{\simeq} B$ such that $q|_A=p$, where~$U \ce A[X]\langle Y \rangle$ is a local semifree extension of $A$ such that $X_{\ge s}=Y_{<s}=\emptyset$ (if $s = \infty$, then~$X_{\ge s} \ce \emptyset$, and $Y_{< s} \ce Y$). We often suppress the maps $p$ and $q$ and refer to $U$ as a \defi{model  of~$B$ over $A$ with switching degree $s$}.
\end{defn}

Given a semifree extension $A[X]\langle Y \rangle$, we let $X^2$ (resp. $XY$, $Y^{(2)})$ denote the ideal of $A$ generated by $\{xx' \text{ : } x, x' \in X\}$ (resp. $\{xy \text{ : } x \in X, y \in Y\}$, $\{y_1^{(i_1)}y_2^{(i_2)}\cdots y_n^{(i_n)} \text{ : } y_i \in Y \text{, } i_1+i_2+\cdots +i_n \ge 2\}$).

\begin{defn}
A semifree extension $U\ce A[X]\langle Y \rangle$ of $A$ with differential $\del_U$ is called \defi{minimal} if we have $\partial_U(U) \subseteq \m_A \cdot U+X^2 \cdot U+XY\cdot U+Y^{(2)} \cdot U$. 
\end{defn}

Let $p \co A \to B$ be as in Definition~\ref{defn:switch}, and suppose $U \ce A[X]\langle Y\rangle$ is a model for $p \co A \to B$ with switching degree $s$. Assume $U$ is minimal. When $s=\infty$, we call $U$ a \defi{minimal model for $B$ over $A$}. At the opposite extreme, if $s=0$, then we call $U$ an \defi{acyclic closure} of $B$ over $A$. 

\begin{notation}
\label{nota:Ui}
Given a semifree extension $U\ce A[X]\langle Y \rangle$ of $A$, we write $U(i)\ce A[X_{\le i}]\langle Y_{\le i}\rangle$.
\end{notation}

The following is a useful characterization of minimality of semifree extensions:

\begin{prop}
\label{prop:semimin}
Let $U\ce A[X]\langle Y \rangle$ be a semifree extension. The following are equivalent: 
\begin{enumerate}
\item $U$ is minimal.
\item For any $i \in \Z$ and $u \in X_{i+1} \cup Y_{i+1}$ with $\partial_U(u)=e+\sum^n_{j=1}r_jv_j$, where $e \in U(i-1)$ and~$v_1, \dots, v_n$ are distinct variables in $X_i \cup Y_i$, we have $r_1,\dots,r_n \in \m_{A_0}$.
\end{enumerate}
\end{prop}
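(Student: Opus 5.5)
Both implications come from one observation about degrees. Set $V \ce X \cup Y$ and let $I \ce X^2 U + XYU + Y^{(2)}U$ be the ideal of decomposables, so that minimality reads $\partial_U(U) \subseteq \m_A U + I$. For a variable $u \in V_{i+1}$, write $\partial_U(u)$ in the basis of monomials over $A$. Since $A$ is nonnegatively graded and every variable has positive degree, each monomial in $\partial_U(u)$ has total variable-degree at most $i$. If such a monomial is not decomposable, it contains exactly one variable (to the first power, or as $y^{(1)}$), and that variable has degree at most $i$. I would therefore use the decomposition $\partial_U(u) = e + \sum_j r_j v_j$ from (2) as follows: the $v_j$ run over the distinct variables of degree exactly $i$ appearing linearly, the coefficients $r_j$ lie in $A_0$ for degree reasons, and $e$ collects everything else. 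That remainder is a combination of monomials in variables of degree at most $i-1$, together with decomposable terms and terms $a v$ with $v \in V_i$ and $|a| > 0$. These last terms are absorbed using $A_{>0} \subseteq \m_A$.

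For (1) $\Rightarrow$ (2), I would reduce modulo $\m_A U + I$. Because $U$ is free over $A$ on the monomial basis, the quotient $U/(\m_A U + I)$ is a free $\k$-module with basis $1$ together with the images of the individual variables, these being the indecomposable monomials. Minimality forces the image of $\partial_U(u)$ to vanish. The element $e$ of $U(i-1)$ contributes only to the coordinates of $1$ and of variables of degree below $i$. Hence the coordinate of each $v_j$ is $\bar r_j$, and this must be $0$, giving $r_j \in \m_{A_0}$. The one subtlety is that $e$ might contain degree-$i$ variables of $U(i-1)$. This cannot happen, because $U(i-1)$ involves only variables of degree at most $i-1$. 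I will nevertheless record that the decomposition in (2) is essentially unique modulo $\m_A U + I$, so that the conclusion does not depend on the chosen decomposition.

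For (2) $\Rightarrow$ (1), the map $\partial_U$ is a derivation, with the divided-power Leibniz rule on $Y$. By the Leibniz rule, $\partial_U(U)$ is contained in the $A$-submodule generated by $\partial_A(A)\cdot(\text{monomials})$ and by terms $(\text{monomial})\cdot\partial_U(u)$ for variables $u$. Since $\m_A U + I$ is an ideal, it suffices to prove two things: $\partial_A(A) \subseteq \m_A$, and $\partial_U(u) \in \m_A U + I$ for every variable $u$. The first holds because $\partial_A(A_1) \subseteq \m_{A_0}$: $A$ is local with $H_0(A) \ne 0$, so the image of $\partial_A$ in $A_0$ is a proper ideal. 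The positive-degree part of $\partial_A(A)$ lies in $A_{>0} \subseteq \m_A$.

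The main obstacle is the second claim, that $\partial_U(u) \in \m_A U + I$ for each variable $u$, since (2) controls only the coefficients of variables of degree exactly $i$. I would prove it by induction on $|u|$, but applied to $e$ and not to $\partial_U(u)$. Write $e \in U(i-1)$ in the monomial basis of $U(i-1)$. Its constant term lies in $A_i$; for $i \ge 1$ this is in $\m_A$. For $i = 0$, the constant term is $\partial_U(u) \in A_0$ itself, which must lie in $\m_{A_0}$ because $U$ is local (equivalently, $H_0$ must not collapse). Its decomposable monomials lie in $I$. The remaining linear terms $a v$ with $v \in V_{\le i-1}$ have $|a| = i - |v| \ge 1$, so $a \in \m_A$. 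Thus every variable of degree below $i$ enters $e$ only with a coefficient in $\m_A$. Together with $r_j \in \m_{A_0}$, this gives $\partial_U(u) \in \m_A U + I$.
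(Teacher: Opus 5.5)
Your proposal is correct and is essentially the paper's argument: for $i\ge 1$, degree forces $e\in A_{\ge 1}U(i-1)$ plus decomposables, so minimality comes down to the linear coefficients $r_j$, which you read off in the free $\k$-module $U/(\m_AU+I)$ where the paper invokes Nakayama. Your extra steps fill in details the paper's short proof leaves implicit: you reduce (2)$\Rightarrow$(1) to variables via the Leibniz rule and $\partial_A(A)\subseteq\m_A$, and you use $H_0(U)\neq 0$ for degree-one variables, where (2) is vacuous and the paper's degree claim does not apply (the implication genuinely fails for $U=\k[x \delimit \partial(x)=1]$ over $A=\k$).
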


\begin{proof}
Since $e \in U(i-1)$ has degree $i$, we must have: 
$$
e \in A_{\ge 1}U(i-1)+X^2\cdot U(i-1)+XY\cdot U(i-1)+Y^{(2)} \cdot U(i-1). 
$$
Thus, $U$ is minimal if and only if $\sum^n_{j=1} r_jv_j \in \m_AU$, which holds if and only if $\sum^n_{j=1} r_jv_j \in \m_{A_0}U$. Since $v_1, \dots, v_n$ are distinct, this occurs exactly when $r_1,\dots,r_n\in \m_{A_0}$, by Nakayama's Lemma.
\end{proof}

\begin{lemma}
\label{lem:mingens}
Let $p \co A \to B$ be a morphism of local dg-algebras such that $H_0(p)$ is surjective. Let~$q \co U\ce A[X]\langle Y \rangle \xra{\simeq} B$ be a model for $B$ over $A$ with switching degree $s$. For each $i \ge 1$, let $p_i$ denote the composition~$U(i) \into U \xra{q} B$, and set $p_0 \ce p$. 
\begin{enumerate}
\item For all $1 \le i<s$, the cycles $Z_i \ce \left\{ \begin{pmatrix} \del_{U}(x) \\ p_i(x)\end{pmatrix} \delimit x \in X_i\right\}$ in~$\cone(p_{i-1})$ descend to a generating set of the~$A_0$-module~$H_i(\cone(p_{i-1}))$.
\item For all $i \ge s$, the cycles $Z_i \ce \left\{ \begin{pmatrix} \del_{U}(y) \\ p_i(y)\end{pmatrix} \delimit y \in Y_i\right\}$ in~$\cone(p_{i-1})$ descend to a generating set of the~$A_0$-module~$H_i(\cone(p_{i-1}))$.
\item The semifree extension $U$ is minimal if and only for all $i \ge 1$, the~$A_0$-module $H_i(\cone(p_{i-1}))$ is finitely generated with minimal generators given by the cycles in~$Z_i$.
\end{enumerate}
\end{lemma}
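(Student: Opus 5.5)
The plan is to compare $\cone(p_{i-1})$ and $\cone(p_i)$ through the inclusion $U(i-1)\into U(i)$, and to use two facts about $q$: it is a quasi-isomorphism, and $U$ is built from $U(i-1)$ by adjoining only variables of degree at least $i$.

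First, a degree observation. Every element of $U(i)$ outside $U(i-1)$ involves a variable of degree at least $i$. Hence in degrees $\le i$, $U(i)$ equals $U(i-1)$ plus the free $A_0$-span of the degree-$i$ variables. Moreover, $U$ and $U(i)$ agree in degrees $\le i$. From this, together with the fact that $q$ is a quasi-isomorphism, we get $H_j(\cone(p_i))=0$ for $j\le i$:
\begin{itemize}
\item the map $H_j(U(i))\to H_j(U)$ is an isomorphism for $j<i$ and a surjection for $j=i$;
\item composing with $H(q)$ and using the long exact sequence of the cone gives the vanishing.
\end{itemize}
The case $j=0$ uses the hypothesis that $H_0(p)$ is surjective, and that $H_0(A)\to H_0(U)$ is onto since all variables have positive degree.

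For (1) and (2), take $i$ with $1\le i<s$ (so the new variables are the set $X_i$), resp.\ $i\ge s$ (so they are $Y_i$). Write $W_i$ for the new degree-$i$ variables. There is a map of cones $\cone(p_{i-1})\to\cone(p_i)$, and in degrees $\le i$ its cokernel is the free $A_0$-module $\bigoplus_{w\in W_i}A_0\, w$ placed in the shifted degree $i+1$. Rather than use the resulting long exact sequence, I would argue directly. Let $(a,b)$ be a degree-$i$ cycle in $\cone(p_{i-1})$; thus $a\in U(i-1)_{i-1}$ and $b\in B_i$. Its image in $\cone(p_i)$ is a boundary, because $H_i(\cone(p_i))=0$. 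So there is $(u,c)$ of degree $i+1$ in $\cone(p_i)$, with $u\in U(i)_i$ and $c\in B_{i+1}$, whose differential is $(a,b)$ up to the sign conventions of the cone. Write
\[u=u'+\sum_{w\in W_i} r_w w,\qquad u'\in U(i-1)_i,\ r_w\in A_0.\]
Then $(a,b)-\sum_w r_w\,(\del_U w,\ p_i(w))$ equals $\del(u',c)$ in $\cone(p_{i-1})$, up to the signs. Hence the classes of $Z_i$ generate $H_i(\cone(p_{i-1}))$. The thing to verify is that the degree-$i$ part of $U(i)$ is exactly as written: divided powers and products of new variables have degree at least $2i$, and $2i>i$ when $i\ge1$.

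For (3), first note finiteness. The set $W_i$ is finite because $U$ is local (Remarks~\ref{remarks}(1)), so $H_i(\cone(p_{i-1}))$ is finitely generated. By Nakayama, $Z_i$ is a minimal generating set if and only if no nontrivial combination $\sum r_w[z_w]$ with some $r_w$ a unit is zero. I would then show that a relation $\sum r_w (\del w, p_i w)=\del(v,c)$, with $v\in U(i-1)_i$, is equivalent to the element $\sum r_w w - v$ in $U(i)$ being killed up to a boundary in the cone structure. The point is to match such relations with the failure of the condition in Proposition~\ref{prop:semimin}(2).

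Concretely, suppose a degree-$(i+1)$ variable $u$ has $\del u=e+\sum r_j v_j$ with some $r_j$ a unit. Then $(\del u, p_{i+1}(u))$ produces a relation among the $Z_i$ in $H_i(\cone(p_{i-1}))$ with a unit coefficient; here $e$ and $p_{i+1}(u)$ provide the boundary term. Conversely, a relation with a unit coefficient gives a cycle that is a boundary in $\cone(p_i)$ of a degree-$(i+1)$ element. By (1) and (2) applied at $i+1$, that element is expressed through the degree-$(i+1)$ variables. This forces some such variable to have a unit coefficient on $v_j$ in its differential.

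This converse direction is the main obstacle. It requires tracking the $A_0$-linear parts of the differentials of the variables modulo $\m_{A_0}$ and modulo decomposables, using the characterization in Proposition~\ref{prop:semimin}.
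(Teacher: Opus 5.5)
Your parts (1) and (2) are correct and follow the paper's argument up to cosmetics: you lift in $\cone(p_i)$ after checking $H_i(\cone(p_i))=0$, while the paper lifts directly in the acyclic complex $\cone(q)$. Your implication ``$Z_i$ minimally generates for all $i$ $\Rightarrow$ $U$ minimal'' is also the paper's, via $\del_{\cone(p_{i-1})}\bigl(e,p_{i+1}(u)\bigr)=\sum_j r_j\bigl(\del_U v_j,\,p_i(v_j)\bigr)$.

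The genuine gap is the other implication, which you leave as ``the main obstacle.'' A relation $\sum_w r_w z_w=\del(v,c)$ with $v\in U(i-1)_i$ gives the degree-$(i+1)$ cycle $\bigl(v+\sum_w r_w w,\,c\bigr)$ of $\cone(p_i)$. It is not, in general, a boundary there. Expressing its class through $Z_{i+1}$ gives
\[
v+\sum_w r_w w=\sum_u s_u\,\del_U u-\del_U t' \quad\text{with } t'\in U(i)_{i+1}.
\]
Nothing in your sketch controls the coefficients of the degree-$i$ variables in $\del_U t'$, so you cannot conclude that some single $\del_U u$ has a unit coefficient.

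The fix is to bypass $Z_{i+1}$ entirely. The cycle is a boundary in the acyclic complex $\cone(q)$, so
\[
v+\sum_w r_w w\in\del_U(U)\subseteq \m_AU+X^2U+XYU+Y^{(2)}U.
\]
Now count degrees:
\begin{itemize}
\item A product of a degree-$i$ variable with anything of positive degree, or a divided power $w^{(k)}$ with $k\ge 2$, has degree $>i$. So the degree-$i$ part of $X^2U+XYU+Y^{(2)}U$ lies in $U(i-1)$.
\item $(\m_AU)_i=\m_{A_0}U_i+A_{\ge 1}U_{<i}$.
\end{itemize}
Hence every degree-$i$ element of the right-hand side has its coefficients on the degree-$i$ variables in $\m_{A_0}$, so all $r_w\in\m_{A_0}$. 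This is exactly how the paper argues.

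There are two smaller issues. First, your finite-generation step (``$W_i$ is finite because $U$ is local'') rests on the ``only if'' half of Remarks~\ref{remarks}(1). The paper does not use that here, and it is not true as stated. For example, $\k[x_1,x_2,\dots]\langle y_1,y_2,\dots\mid \del y_j=x_j\rangle$ with $|x_j|=1$ and $\del x_j=0$ is local and quasi-isomorphic to $\k$, yet it has infinitely many variables in degrees $1$ and $2$. The paper instead proves by induction on $i$ that $H_i(\cone(p_{i-1}))$ is finitely generated. It uses the exact sequence $H_i(B)\to H_i(\cone(p_{i-1}))\to H_{i-1}(U(i-1))$ together with the finiteness of $X_j,Y_j$ for $j<i$, obtained from minimal generation at earlier stages. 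Finiteness of $X_i,Y_i$ is then an output (Corollary~\ref{cor:finite}), not an input.

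Second, degree-one variables are not covered by your relation argument, since there are no $v_j$. You must check separately that $\del_U u\in\m_{A_0}$ for $|u|=1$. This holds because otherwise a unit of $B_0$ would be a boundary, forcing $H_0(B)=0$.
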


We remind the reader that our sign convention for mapping cones is established in Notation~\ref{notation}.

\begin{proof}
 Fix $i \ge 1$, and let $z$ be a degree $i$ cycle in~$\cone(p_{i-1})$. The cycle $z$ is a boundary in~$\cone(q)$, so we may choose $t \in U_i$ and $b \in B_{i+1}$ such that $z = \begin{pmatrix} -\del_{U}(t) \\ \del_B(b) - p_i(t) \end{pmatrix}$ in~$\cone(q)$. Write $t = t' + t''$, where $t' \in U(i-1)$, and $t''$ is an~$A_0$-linear combination of the variables in~$X_i$ when $i<s$ and $Y_i$ when $i \ge s$. Since $\begin{pmatrix} -\del_{U}(t') \\ \del_B(b) - p_i(t') \end{pmatrix}$ is a boundary in $\cone(p_{i-1})$, we conclude that $z = \begin{pmatrix} \del_{U}(-t'') \\  p_i(-t'') \end{pmatrix}$ in~$H_i(\cone(p_{i-1}))$. Thus, the cycles in $Z_i$ generate $H_i(\cone(p_{i-1}))$. This proves (1) and (2).
 
 Assume now that $q$ is a minimal model of $B$ over $A$ with switching degree $s$, and fix $i \ge 1$. We first show that, if~$H_i(\cone(p_{i-1}))$ is finitely generated, then the cycles in $Z_i$ minimally generate $H_i(\cone(p_{i-1}))$. Suppose we have~$z \ce r_1z_1 + \cdots + r_mz_m = 0$ in~$H_i(\cone(p_{i-1}))$, where $z_j \in Z_i$ and~$r_j \in A_0$ for all~$j$. It suffices to show $r_j \in \m_{A_0}$ for all $j$. Write $z_j = \begin{pmatrix} \del_{U}(v_j) \\  p_i(v_j) \end{pmatrix}$ for all $j$, where $v_1, \dots, v_m \in X_i$ (resp. $Y_i$) if $i < s$ (resp. $i \ge s$). Since $z$ is a boundary in $\cone(p_{i-1})$, we may choose a degree $i$ element $w \in U(i-1)$ and some $c \in B_{i+1}$ such that~$z = \begin{pmatrix} -\del_{U}(w) \\ \del_B(c) - p_{i-1}(w) \end{pmatrix}$. Thus, $\begin{pmatrix} w + \sum_{j = 1}^m r_jv_j \\ c \end{pmatrix}$ is a cycle in $\cone(q)$, and hence also a boundary. In particular, the cycle $w + \sum_{j = 1}^m r_jv_j \in U$ is a boundary. Since $U$ is minimal, this implies $r_j \in \m_{A_0}$ for~all~$j$ (Proposition~\ref{prop:semimin}).

 To prove the ``only if" direction of (3), we argue by induction on $i$ that $H_i(\cone(p_{i-1}))$ is finitely generated over $A_0$ for all $i \ge 1$. Since $H_0(p)$ is surjective, $H_0(B)$ is a cyclic $H_0(A)$-module. It follows that $H_j(A)$ and $H_j(B)$ are finitely generated over $A_0$ for all $j$, and so $H_1(\cone(p_0))$ is finitely generated over $A_0$. Suppose $i > 1$. By induction, the cycles in $Z_j$ minimally generate~$H_j(\cone(p_{j-1}))$ for all $1 \le j < i$. Thus, $|X_j|,|Y_j| < \infty$ for all $1 \le j < i$, and so $H_{\ell}(U(i-1))$ is finitely generated over $A_0$ for all $\ell$. The exact sequence $H_i(B) \to H_i(\cone(p_{i-1})) \to H_{i-1}(U(i-1))$ thus implies that~$H_i(\cone(p_{i-1}))$ is finitely generated over $A_0$.

Conversely, suppose that, for all $i \ge 1$, $H_i(\cone(p_{i-1}))$ is finitely generated over $A_0$ with minimal generating set given by the cycles in $Z_i$. If $v \in X_1 \cup Y_1$, we have~$p(\del_{U}(v)) = \del_B(p(v))$. Thus, if $\del_{U}(v)$ is a unit in $A_0$, then the ring $B_0$ contains a unit that is a boundary, which is impossible since $H_0(B) \ne 0$. It follows that $\del_{U}(X_1 \cup Y_1) \subseteq \m_{A_0}$. Let $v \in X_n \cup Y_n$, where $n \ge 2$. Write $\del_{U}(v)$ as~$e + \sum_{j = 1}^m r_jv_j$, where $v_j \in X_{n-1} \cup Y_{n-1}$, $r_j \in A_0$ for all $j$, and $e \in U(n-2)$. It suffices to show that $r_j \in \m_{A_0}$ for all $j$ (Proposition~\ref{prop:semimin}). We compute:
$$
\del_{\cone(p_{n-1})} \begin{pmatrix} e \\ p_n(v) \end{pmatrix} = \begin{pmatrix} -\del_{U}(e) \\  \del_Bp_n(v)-p_{n-1}(e)  \end{pmatrix} = \sum_{j = 1}^m r_j\begin{pmatrix} \del_{U}(v_j) \\ p_{n-1}(v_j) \end{pmatrix}.
$$
Since the cycles $\begin{pmatrix} \del_{U}(v_j) \\ p_{n-1}(v_j) \end{pmatrix}$ are minimal generators of $H_n(\cone(p_{n-1}))$, this implies $r_j \in \m_{A_0}$ for all~$j$, which proves (3). 
\end{proof}
\begin{cor}
\label{cor:finite}
If $p \co A \to B$ is a morphism of local dg-algebras such that $H_0(p)$ is surjective, and~$q \co A[X]\langle Y \rangle \xra{\simeq} B$ is a minimal model of $B$ over $A$ with switching degree $s$, then $X_i$ and $Y_i$ are finite sets for all $i \ge 1$.
\end{cor}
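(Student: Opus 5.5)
This is essentially a direct consequence of Lemma~\ref{lem:mingens}(3), together with the finite-generation argument in its proof. The plan is to show that for each $i \ge 1$ the cycles in $Z_i$ form a minimal generating set of a finitely generated $A_0$-module. Then we check that distinct variables give distinct minimal generators, so the number of variables in degree $i$ is the minimal number of generators $\mu_{A_0}(H_i(\cone(p_{i-1})))$, which is finite.

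Concretely, fix the minimal model $q \co U = A[X]\langle Y\rangle \xra{\simeq} B$ with switching degree $s$. Since $U$ is minimal, the ``only if'' direction of Lemma~\ref{lem:mingens}(3) says that $H_i(\cone(p_{i-1}))$ is finitely generated over $A_0$ for every $i \ge 1$, with minimal generators the images of the cycles in $Z_i$. That argument is an induction on $i$ using $H_0(p)$ surjective, the finiteness of $H_j(A)$ and $H_j(B)$, and the exact sequence $H_i(B) \to H_i(\cone(p_{i-1})) \to H_{i-1}(U(i-1))$.

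For $i<s$ only $X_i$ contributes to $Z_i$, and $Y_i=\emptyset$ by definition of the switching degree. For $i \ge s$ only $Y_i$ contributes, and $X_i=\emptyset$. So it suffices to see that the assignment $v \mapsto z_v \ce \begin{pmatrix}\del_U(v)\\ p_i(v)\end{pmatrix}$ from the relevant set of degree-$i$ variables to $H_i(\cone(p_{i-1}))$ is injective and that its image is a minimal generating set. Here I would reuse the argument from the proof of Lemma~\ref{lem:mingens}: if $\sum_j r_j z_{v_j}=0$ in homology for distinct variables $v_1,\dots,v_m$, then $w+\sum_j r_jv_j$ is a boundary in $U$ for some $w\in U(i-1)$. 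Minimality of $U$ (Proposition~\ref{prop:semimin}) then forces every $r_j\in\m_{A_0}$.

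Taking $m=2$ with $r_1=1$, $r_2=-1$ shows that distinct variables have distinct classes. Indeed, the classes are even linearly independent modulo $\m_{A_0}H_i(\cone(p_{i-1}))$. Hence, by Nakayama's Lemma, their images in $H_i(\cone(p_{i-1}))\otimes_{A_0}\k$ are $\k$-linearly independent. Since that vector space has finite dimension $\mu_{A_0}(H_i(\cone(p_{i-1})))$, the number of variables is bounded by this dimension, so $|X_i|,|Y_i|<\infty$.

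The only delicate point is that Lemma~\ref{lem:mingens}(3) speaks of the set $Z_i$ rather than of the variables. The step of interest is therefore the independence modulo $\m_{A_0}$, which rules out infinitely many variables collapsing to finitely many classes. That step is exactly the minimality argument above, so I expect no real obstacle.
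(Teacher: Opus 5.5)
Your proposal is correct and follows the paper's route: the paper just declares the corollary immediate from Lemma~\ref{lem:mingens}(3), and your relation argument makes explicit the counting step it leaves implicit. That step is that distinct degree-$i$ variables give classes that are $\k$-linearly independent in the finite-dimensional space $H_i(\cone(p_{i-1}))\otimes_{A_0}\k$. One small refinement: upgrading ``every relation has coefficients in $\m_{A_0}$'' to independence modulo $\m_{A_0}H_i(\cone(p_{i-1}))$ also uses that the classes of $Z_i$ generate (Lemma~\ref{lem:mingens}(1),(2)), and Nakayama's Lemma is needed only to identify that quotient's dimension with the minimal number of generators.
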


\begin{proof}
Immediate from Lemma~\ref{lem:mingens}. 
\end{proof}

\begin{theorem}\label{thm:minmodelexist}
If $p:A \to B$ is a morphism of local dg-algebras such that $H_0(p)$ is surjective, then~$B$ admits a minimal model $q \co U\ce A[X]\langle Y \rangle \xra{\simeq} B$ with switching degree $s$ over $A$. Given another minimal model $q' \co U'\ce A[X']\langle Y'\rangle \xra{\simeq} B$  with switching degree $s$, there is an isomorphism of dg-$A$-algebras $\psi \co U \xra{\cong} U'$  such that $q = q'\psi$.
\end{theorem}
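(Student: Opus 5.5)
Existence will come from the standard Tate-style inductive construction, with Lemma~\ref{lem:mingens}(3) as the criterion for minimality. Set $U(0) \ce A$ and $p_0 \ce p$. Given $p_{i-1} \co U(i-1) \to B$ with $U(i-1)$ a local semifree extension of $A$, we will have $H_i(\cone(p_{i-1}))$ finitely generated over $A_0$. For $i=1$ this holds because $H_0(p)$ is surjective, so $H(B)$ is degreewise finite over $A_0$. For $i>1$ it follows from the exact sequence $H_i(B) \to H_i(\cone(p_{i-1})) \to H_{i-1}(U(i-1))$, exactly as in the proof of Lemma~\ref{lem:mingens}.

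Choose cycles $\begin{pmatrix} z_j \\ b_j \end{pmatrix}$, $j=1,\dots,n$, in $\cone(p_{i-1})$ whose classes minimally generate $H_i(\cone(p_{i-1}))$. Each such cycle gives $z_j \in U(i-1)_{i-1}$ a cycle with $\del_B(b_j) = p_{i-1}(z_j)$ (up to the sign convention). Adjoin variables $v_j$ with $\del(v_j) = z_j$: exterior or polynomial variables (Construction~\ref{const:poly}) if $i<s$, divided power variables (Construction~\ref{const:divided}) if $i\ge s$. Extend to $p_i$ by $v_j \mapsto b_j$. To do this in the divided power case we need $B$ to receive divided powers, which is the delicate point: we define $p_i(v^{(k)})$ via a chosen compatible system. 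I would handle this by building $U(i)$ in stages, first adjoining only up to the needed degree, since in each homological degree only finitely many monomials matter. If this is problematic for general $B$, the fallback is to replace $B$ by a semifree model of itself.

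One then checks that $H_{\le i-1}(\cone(p_i))=0$ and that $H_i(\cone(p_i))=0$, the latter by construction. Setting $U \ce \bigcup_i U(i)$ yields a quasi-isomorphism, since homology commutes with the colimit. Minimality follows from the converse direction of Lemma~\ref{lem:mingens}(3), since the $Z_i$ were chosen minimal. Locality follows from Remarks~\ref{remarks}(1) together with finiteness (Corollary~\ref{cor:finite}).

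For uniqueness, we use the lifting property of semifree extensions (\cite[Proposition 2.1.9]{avramov} and \cite[Lemma 1.7.8]{GL69}) to construct, inductively on $i$, a morphism of dg-$A$-algebras $\psi \co U \to U'$ with $q'\psi = q$ strictly. Since $q'$ is a surjective-on-homology quasi-isomorphism but not necessarily surjective, strict commutativity needs care. I would construct $\psi$ on variables of degree $i$ by solving $\del'\psi(v) = \psi(\del v)$ together with a correction of $q(v) - q'\psi(v)$ by boundaries, working in $\cone(q')$, which is acyclic. Concretely, for a variable $v$ of degree $i$, the element $\begin{pmatrix}\psi(\del v)\\ q(v)\end{pmatrix}$ is a cycle in $\cone(q')$, hence a boundary; this gives $u \in U'_i$ and $c\in B_{i+1}$ with $\del' u = \psi(\del v)$ and $q'(u) = q(v) + \del_B c$. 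Strictness would require modifying $q$, so instead I would prove uniqueness up to homotopy and then argue that a homotopy-compatible $\psi$ can be adjusted. To avoid this, I will define a minimal model to be compatible with a fixed $q$ only up to the cone data, and derive strict equality by choosing $U$'s generators from the cycles $Z_i'$ of $U'$ via Lemma~\ref{lem:mingens}.

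Finally, $\psi$ is a quasi-isomorphism between minimal semifree extensions, and we show it is an isomorphism. Compare the induced maps on the $A_0$-modules $H_i(\cone(p_{i-1}))$: $\psi$ sends the minimal generating set $Z_i$ to a minimal generating set $Z_i'$ modulo $\m_{A_0}$ and decomposables. Hence on indecomposables $\psi$ is an isomorphism of $\k$-vector spaces in each degree, using minimality via Proposition~\ref{prop:semimin}. A Nakayama-type induction on degree then shows $\psi$ restricts to isomorphisms $U(i) \to U'(i)$ for each $i$, since the variables are finite in number in each degree by Corollary~\ref{cor:finite}. I expect the main obstacle to be obtaining the strict equality $q=q'\psi$ while respecting divided powers, rather than the isomorphism step.
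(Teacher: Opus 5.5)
\textbf{Existence.} Your existence argument is the paper's: kill a minimal generating set of $H_i(\cone(p_{i-1}))$ with new variables, and read off minimality from Lemma~\ref{lem:mingens}(3). This is complete whenever the new variables are polynomial or exterior.

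The divided-power step you flag is left open, and neither of your remedies closes it. Staging the construction does not help, because what you need are elements $p_{i+1}(x_j^{(k)})\in B$ compatible with the differential and with $x^{(k)}x^{(l)}=\binom{k+l}{k}x^{(k+l)}$. Replacing $B$ by a model no longer produces a map to $B$.

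In fact this step can be impossible. Let $\on{char}\k=2$ and $B=\k[b]$ with $|b|=2$ and zero differential. Any acyclic closure $q\co\k\langle Y\rangle\xra{\simeq}B$ must send some $u\in\k\langle Y\rangle_2$ to $b$. But $u^2=2u^{(2)}=0$, while $b^2\neq 0$. The paper's proof simply sets $p_{i+1}(x_j)=b_j$ without comment. So for finite $s$ the existence claim needs an extra hypothesis, not a cleverer construction; it is harmless when, e.g., $B=\k$ or $\mathbb{Q}\subseteq A_0$.

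\textbf{Uniqueness.} Here your route differs from the paper's, which never lifts. Given $\psi_{i-1}\co U(i-1)\xra{\cong}U'(i-1)$ with $p'_{i-1}\psi_{i-1}=p_{i-1}$, the paper:
\begin{enumerate}
\item gets an isomorphism $\cone(p_{i-1})\cong\cone(p'_{i-1})$;
\item expresses the images of the minimal generators $Z_i$ through $Z'_i$ by a matrix $(u_{j\ell})$ over $A_0$, invertible by Nakayama;
\item sets $\psi_i(x_j)=\sum_\ell u_{j\ell}x'_\ell$.
\end{enumerate}
This yields isomorphisms directly and respects divided powers. So it needs neither your lifting lemma nor your separate ``quasi-isomorphism of minimal extensions is an isomorphism'' step. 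For $\Gamma$-variables, \cite[Lemma 1.7.8]{GL69} requires a divided-power target, which $U'=A[X']\langle Y'\rangle$ need not be. Your comparison step also presupposes $q'\psi=q$ in order to compare cones.

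As written, your argument does not reach $q=q'\psi$. Once $q'\psi(v)=q(v)+\del_B(c)$ with $\del_B(c)\neq 0$, take a later variable $w$ whose boundary involves $v$. The pair $(\psi(\del w),q(w))$ is no longer a cycle in $\cone(q')$, so the induction stalls. Redefining minimal models to be compatible ``up to cone data'' changes the theorem.

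Your suspicion is well founded, however. The paper's identity $\varphi(z_j)=\sum_\ell u_{j\ell}z'_\ell$ holds only in $H_i(\cone(p'_{i-1}))$, not for the chosen cycles, and strict compatibility can fail outright. Take $B=\k\oplus(\k b\oplus\k d)\oplus\k c$ with $|b|=|d|=1$, $|c|=2$, $\del c=d$, and all products of positive-degree elements zero. Then $q,q'\co\k[x]\to B$ with $|x|=1$, $q(x)=b$ and $q'(x)=b+d$ are minimal models of $B$ over $\k$ for every $s$. Yet every automorphism $\psi$ of $\k[x]$ has $\psi(x)=\lambda x$ for a unit $\lambda\in\k$, so $q'\psi(x)=\lambda(b+d)\neq b$.

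Thus the most one can aim for is $U\cong U'$ over $A$, with $q'\psi$ and $q$ agreeing only up to homotopy. That is all Corollary~\ref{minmodelscor} and Notation~\ref{nota:dev} need. Proving it requires carrying a homotopy through the induction, which neither your sketch nor the paper's write-up does.
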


\begin{proof}
We construct $U$ inductively. More specifically, for all $i \ge 0$, we show there exists a local morphism $p_i:U(i) \to B$ such that:
\begin{enumerate}
\item[$(1)$] $H_{i+1}(\cone(p_{i}))$ is finitely generated over $A_0$.
\item[$(2)$] The composition $U(j) \hookrightarrow U(i) \xrightarrow{p_i} B$ is equal to $p_j$ for all $j<i$.
\item[$(3)$] $H_j(p_i) \co H_j(U(i)) \to H_j(B)$ is an isomorphism for all $j<i$.
\item[$(4)$] $H_i(p_i) \co H_i(U(i)) \to H_i(B)$ is surjective.

\end{enumerate}
We proceed by induction on $i$, taking $p_0 \ce p$ as the base case. Let $i > 0$. Suppose $p_j \co U(j) \to B$ have been constructed for $j \le i$. Let $z_1,\dots,z_n$ be cycles in $\cone(p_i)$ that descend to a minimal generating set for $H_{i+1}(\cone(p_{i}))$ over $A_0$. Write each $z_j$ as $\begin{pmatrix} a_j \\ b_j \end{pmatrix}$, where $a_j$ is a cycle of degree~$i$ in $U(i)$, and $b_j \in B_{i+1}$ satisfies $\partial_B(b_j)=p_i(a_j)$. We set 
\[
U(i+1) \ce \begin{cases} U(i)[x_1,\dots,x_n \delimit \partial(x_j)=a_j], & 0 \le i<s-1; \\ U(i)\langle x_1,\dots,x_n \delimit \partial(x_j)=a_j\rangle, &i \ge s-1. \end{cases}
\]
Since $U(i)$ is local, so is $U(i+1)$ (Remarks~\ref{remarks}(1)). We extend $p_i$ to a morphism $p_{i+1}$ of local dg-algebras by defining $p_{i+1}(x_j)=b_j$. Condition (1) follows from the exact sequence
$$
H_{i+2}(B) \to H_{i+2}(\cone(p_{i+1})) \to H_{i+1}(U(i+1)),
$$
and (2) is clear. Let us now check conditions (3) and (4). The inclusion $\cone(p_i) \into \cone(p_{i+1})$ of complexes is an equality in degrees at most $i+1$. Thus, $H_j(\cone(p_{i+1})) = H_j(\cone(p_{i})) = 0$ for $j \le i$, and there is a surjection $H_{i+1}(\cone(p_i)) \onto H_{i+1}(\cone(p_{i+1}))$. It follows that ~$z_1, \dots, z_n$ generate $H_{i+1}(\cone(p_{i+1}))$ as an $A_0$-module. We have $\partial_{\cone(p_{i+1})}\begin{pmatrix} -x_j \\ 0 \end{pmatrix}=z_j$ for all $1 \le j \le n$; since $\cone(p_{i+1})$ is $A_0$-linear, we conclude that $H_{i+1}(\cone(p_{i+1})) = 0$, which gives (4). The long exact sequence in homology associated to the triangle $U(i+1) \to B \to \cone(p_{i+1})$ now implies (3).

Let $q \co U \to B$ be the colimit of the maps $p_i \co U(i) \to B$. The map $q$ is a quasi-isomorphism, and $q|_A = p$. By the construction of $U$, the cycles $Z_i \ce \left\{ \begin{pmatrix} \del_{U}(x) \\ p_i(x)\end{pmatrix} \delimit x \in X_i\right\}$ in~$\cone(p_{i-1})$ descend to a minimal generating set of the~$A_0$-module~$H_i(\cone(p_{i-1}))$ when $1 \le i<s$, while $Z_i \ce \left\{ \begin{pmatrix} \del_{U}(x) \\ p_i(x)\end{pmatrix} \delimit x \in Y_i\right\}$ descend to a minimal generating set for $H_i(\cone(p_{i-1}))$ when $i \ge s$. Lemma~\ref{lem:mingens} thus implies that $q \co U \xra{\simeq} B$ is a minimal model of $B$ over $A$ with switching degree $s$. 

Suppose $q' \co U' \xra{\simeq} B$ is another such minimal model. Let $p_i'$ be the restriction of $q'$ to $U'(i)$. We now prove that, for all $i \ge 0$, there is an isomorphism $\psi_i \co U(i) \cong U'(i)$ of dg-$A$-algebras such that $p_i' \psi_i = p_i$, and the restriction of $\psi_i$ to $U(j)$ is equal to $\psi_j$ for all~$0 \le j < i$. We once again argue by induction on $i$. For the base case, we take $\psi_0 \ce \id_A$. Let~$i > 0$, and suppose the maps~$\psi_j$ are constructed for $0 \le j < i$. There is an isomorphism
$$
\varphi_{i-1} \co \cone(p_{i-1}) \xra{\cong} \cone(p'_{i-1}) \quad \text{given by} 
\begin{pmatrix} a \\ b \end{pmatrix} \mapsto \begin{pmatrix} \psi_{i-1}(a) \\ b \end{pmatrix};
$$
in particular, there is an  isomorphism $ H_i(\cone(p_{i-1})) \cong H_i(\cone(p'_{i-1}))$. Let~$Z_i$ and $Z_i'$ be the minimal generating sets of~$H_i(\cone(p_i))$ and $H_i(\cone(p_i'))$ as $A_0$-modules arising from Lemma~\ref{lem:mingens}(2) (so~$Z_i$ is as in the construction of $U$ above). Write $Z_i = \{z_1, \dots, z_n\}$ and $Z_i' =~\{z_1', \dots, z_n'\}$. For all $1 \le j \le n$, there exist $u_{j, \ell} \in A_0$ such that we have $\varphi(z_j) = \sum_{\ell = 1}^n u_{j, \ell} z_\ell'$. Write

\[U(i)=\begin{cases} U(i-1)[x_1, \dots, x_n], &  i<s; \\ U(i-1)\langle x_1, \dots, x_n \rangle, &  i \ge s; \end{cases} \quad \quad \text{ and } \quad \quad U'(i)=\begin{cases} U'(i-1)[x'_1, \dots, x'_n], & i<s; \\ U'(i-1)\langle x'_1, \dots, x'_n \rangle, &  i \ge s. \end{cases}\]
Define $\psi_i \co U(i) \to U'(i)$ to be the lift of $\psi_{i-1}$ that sends $x_j$ to the sum~$\sum_{\ell = 1}^n u_{j, \ell}x_\ell'$. The map $\psi_i$ is an isomorphism of dg-$A$-algebras, and $p_i' \psi_i = p_i$. Finally, let $\psi$ be the colimit of the isomorphisms~$\psi_i \co U(i) \xra{\cong} U'(i)$. 
\end{proof}

\begin{cor}\label{minmodelscor}
If $p:A \to B$ is a morphism of local dg-algebras such that $H_0(p)$ is surjective, then~$p$ admits both a minimal model and an acyclic closure, both of which are unique up to isomorphism of dg-$A$-algebras.
\end{cor}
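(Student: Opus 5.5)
This follows from Theorem~\ref{thm:minmodelexist} by choosing the switching degree appropriately. By the definitions following Definition~\ref{defn:switch}, a minimal model of $B$ over $A$ is exactly a minimal model for $p$ with switching degree $s=\infty$. An acyclic closure is exactly a minimal model for $p$ with switching degree $s=0$. The hypothesis that $H_0(p)$ is surjective is precisely the hypothesis of Theorem~\ref{thm:minmodelexist}, so we apply that theorem twice, once with $s=\infty$ and once with $s=0$.

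The first step is to confirm that the inductive construction in the proof of Theorem~\ref{thm:minmodelexist} makes sense at the two extreme values.
\begin{itemize}
\item For $s=\infty$, the condition $0\le i< s-1$ holds for every $i$, so only polynomial variables are adjoined. Hence $Y=\emptyset$, consistent with the convention $X_{\ge \infty}=\emptyset$ and $Y_{<\infty}=Y$.
\item For $s=0$, the condition $i\ge s-1=-1$ holds for every $i\ge 0$, so only divided power variables are adjoined. Hence $X=\emptyset$.
\end{itemize}
Lemma~\ref{lem:mingens}, which the theorem uses to verify minimality, is likewise stated for arbitrary $s\in \Z_{\ge 0}\cup\{\infty\}$. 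In the case $s=0$, part (1) is vacuous and part (2) covers all $i\ge 1$. In the case $s=\infty$, part (2) is vacuous and part (1) covers all $i \ge 1$. So existence holds in both cases.

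For uniqueness, the second half of Theorem~\ref{thm:minmodelexist} gives the following. Two minimal models $q,q'$ with the same switching degree are related by an isomorphism of dg-$A$-algebras $\psi$ with $q=q'\psi$. Taking $s=\infty$ yields uniqueness of minimal models, and taking $s=0$ yields uniqueness of acyclic closures.

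I expect no real obstacle. The only point deserving care is the bookkeeping at $s=0$ and $s=\infty$ in the case distinctions of the proof. At $s=0$ there is no degree-$0$ variable issue, since all variables sit in degrees $\ge 1$. At $s=\infty$ the case ``$i\ge s-1$'' never occurs. Checking these conventions against the uniqueness step's case split ($i<s$ versus $i\ge s$) completes the argument.
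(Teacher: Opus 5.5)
Your proposal is correct and matches the paper, which states this corollary without separate proof as the specialization of Theorem~\ref{thm:minmodelexist} to switching degree $s=\infty$ (minimal model) and $s=0$ (acyclic closure). Uniqueness up to isomorphism of dg-$A$-algebras comes from the second half of that theorem, just as you say. Your bookkeeping at these two extreme values of $s$ is consistent with the conventions of Definition~\ref{defn:switch}.
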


\begin{notation}
\label{nota:dev}
If $U\ce A[X]\langle Y \rangle$ is a semifree extension of $A$, we set $n_i(U)\ce |X_i|$ and $\epsilon_i(U) \ce|Y_i|$. If $U$ is a minimal model for $p:A \to B$, we suppress the map $p$ and simply write $n^A_i(B)$ for $n_i(U)$. Similarly, if $U$ is an acyclic closure for $p$, we write $\epsilon^A_i(B)$ for $\epsilon_i(U)$. By Corollaries~\ref{cor:finite} and~\ref{minmodelscor}, the values $n^A_i(B)$ and $\epsilon^A_i(B)$ are finite and independent of the choice of $U$ for all $i \ge 1$. 
\end{notation}

\begin{defn}[cf. \cite{avramov} Section 7]
\label{defn:deviation}
The number~$\epsilon^A_i(\k)$ is called the \defi{$i^{\th}$ deviation of $A$}. 
\end{defn}

\begin{example}
In Example~\ref{ex:poly1}, if $z \in \m$, then $A[x]$ is the minimal model of $A/(z)$ over $A$. In~Example~\ref{ex:poly2}, $A[x]$ is the minimal model of $\k$ over $R$. The semifree extension $A\langle y \rangle$ in Example~\ref{ex:shamash} is the acyclic closure of $\k$ over $R$. 
\end{example}

The following is a famous result of Gulliksen and will be needed later:

\begin{theorem}[\cite{avramov} Theorem 6.3.4]\label{acyclicclosureofkminimal}
The acyclic closure of $\k$ over $A$ is isomorphic, as a dg-$A$-module, to the minimal semifree resolution of $\k$ over $A$.
\end{theorem}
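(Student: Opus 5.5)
The plan is to follow Gulliksen's original argument, as presented in \cite[Theorem 6.3.4]{avramov}, adapted to a local dg-algebra $A$. Let $U \ce A\langle Y \rangle \xra{\simeq} \k$ be the acyclic closure of $\k$ over $A$, which exists and is unique by Corollary~\ref{minmodelscor}. Since $A\langle Y\rangle$ is free as a graded $A$-module, with basis given by the monomials $y_1^{(i_1)}\cdots y_n^{(i_n)}$ in the divided power variables, and since $Y = Y_{\ge 1}$ with each $Y_i$ finite (Corollary~\ref{cor:finite}), $U$ is a semifree dg-$A$-module resolving $\k$. By the uniqueness in Proposition~\ref{prop:sfres}, it therefore suffices to show that $U$ is a \emph{minimal} semifree resolution, i.e.\ that $\del_U(U) \subseteq \m_A U$. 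Minimality of $U$ as a semifree \emph{extension} only gives $\del_U(U) \subseteq \m_A U + Y^{(2)}U$. The whole content is therefore to remove the $Y^{(2)}$ term, that is, to show that $\del_U$ of every variable lands in $\m_A U$.

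Step 1 is to show that $\k \otimes_A U = U/\m_A U$ has zero differential, which is equivalent to minimality. Following Gulliksen, I would use $\Gamma$-derivations. For each variable $y \in Y$, one constructs a $\Gamma$-derivation $\theta_y$ of $U$ over $A$. It should be $A$-linear, satisfy the Leibniz rule with signs, respect divided powers via $\theta(w^{(k)}) = \theta(w)w^{(k-1)}$, and commute with $\del_U$ up to sign. One also wants $\theta_y(y') \equiv \delta_{y,y'}$ modulo terms lying in $\m_A U$ plus variables of lower degree. These derivations are built degree by degree on $U(i)$. The key input is the lifting property of semifree $\Gamma$-extensions \cite[Lemma 1.7.8]{GL69}, applied together with the acyclicity of $U$ in positive degrees: we need $H_{>0}(U)=0$ and $H_0(U) = \k$, so that obstruction classes lying in $H_{>0}$ vanish.

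Step 2 is to pass to $\ov U \ce U/\m_A U = \k\langle Y\rangle$, which has induced differential $\ov\del$ and induced derivations $\ov\theta_y$. I would argue by induction on degree that $\ov\del(y) = 0$ for all $y$. Suppose $\ov\del$ vanishes on $Y_{<i}$ and let $y \in Y_i$. By minimality, $\ov\del(y)$ lies in the span of decomposable divided power monomials in $Y_{<i}$. Applying each $\ov\theta_{y'}$ and using $[\ov\theta_{y'},\ov\del]=0$ together with the induction hypothesis, we get $\ov\theta_{y'}(\ov\del(y)) = \pm\ov\del(\ov\theta_{y'}(y))$. The right-hand side is $\ov\del$ applied to an element of lower degree, which by induction lies in $\k\langle Y_{<i}\rangle$ where $\ov\del = 0$, so it vanishes. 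An element of positive degree in $\k\langle Y_{<i}\rangle$ killed by all the dual derivations $\ov\theta_{y'}$ is zero: this is the divided power analogue of the fact that only constants have zero partial derivatives. Hence $\ov\del(y)=0$.

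The main obstacle is Step 1. In the local ring case this construction uses that $H(U)=\k$, and here that still holds. However, $A$ now has nonzero differential and higher-degree components, so the construction must track $A$-linearity carefully, and the obstruction to extending $\theta$ over a new variable must be shown to be a boundary. I would first try to deduce the existence of $\theta_y$ from the isomorphism of Lemma~\ref{lem:mingens}, identifying $\k\otimes$ indecomposables in degree $i$ with $\k \otimes_{A_0} H_i(\cone(p_{i-1}))$. If that fails, I would instead run the argument of \cite[Theorem 6.3.4]{avramov} verbatim, noting that it only uses that $A$ is a local dg-algebra.
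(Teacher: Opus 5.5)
Your proposal is correct and is essentially the argument the paper relies on. The paper gives no proof of its own: it cites Gulliksen's theorem \cite[Theorem 6.3.4]{avramov}, whose proof is precisely this $\Gamma$-derivation argument, and that argument carries over to a local dg-algebra $A$.

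Two details need to be made explicit. First, in Step 1, when you extend $\theta_y$ over a variable $v$ of degree $|y|+1$, the obstruction $\theta_y(\del v)$ lies in $U_0 = A_0$ and equals the coefficient of $y$ in $\del v$. It is a boundary, i.e.\ lies in $\m_{A_0} = \del(U_1)$, because of the minimality of $U$ (Proposition~\ref{prop:semimin}), not because of acyclicity; this step is where minimality is actually used. Second, in Step 2 you should test $\ov\del(y)$ against $\ov\theta_{y'}$ for $y'$ of maximal degree among the variables occurring in $\ov\del(y)$. Only on $\k\langle Y_{\le |y'|}\rangle$ does $\ov\theta_{y'}$ act as the honest divided-power partial derivative.
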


\section{Derived nilpotent dg-algebras}
\label{sec:nilpotent}
This section is devoted to establishing several technical results we need for Sections~\ref{sec:polynomial} and~\ref{sec:deviatons}. We will use the notation of Setup~\ref{setup}. 

\begin{defn}
\label{defn:nilpotent}
The dg-algebra $A$ is \defi{nilpotent} if any $a \in A_{\ge 1}$ satisfies $a^n=0$ for some $n \ge 1$. We say~$A$ is \defi{derived nilpotent} if there is a local quasi-isomorphism between $A$ and a nilpotent dg-algebra. 
\end{defn}

Of course, if $A_i = 0$ for $i \gg 0$, then $A$ is nilpotent. Thus, if $A$ has bounded homology, then it is derived nilpotent. The converse is false; as a simple example, if $A$ is a semifree extension of~$A_0$ with infinitely many variables of odd positive degree and trivial differential, then $A$ is nilpotent but has unbounded homology.

\begin{defn}
\label{defn:koszulcomplex}
Given $x_1, \dots, x_n \in A_0$, the \defi{Koszul complex on $x_0, \dots, x_n$} is defined to be
$$
K(x_1, \dots, x_n, A) \ce A[e_1, \dots, e_n \delimit \del_{A[e]}(e_i) = x_i].
$$If $I$ is the ideal of $A_0$ generated by $x_1, \dots, x_n$, we will sometimes abuse notation and write this Koszul complex as $K(I, A)$.
\end{defn}

\begin{lemma}\label{koszulnilpotent}
Let $z_1, \dots, z_n \in \m_{A_0}$. If $A$ is a nilpotent (resp. derived nilpotent), then the Koszul complex $K(z_1, \dots, z_n,A)$ is nilpotent (resp. derived nilpotent).
\end{lemma}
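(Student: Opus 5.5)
Two things have to be checked: that Koszul complexes preserve nilpotence, and that they preserve local quasi-isomorphisms, which then handles the derived case.

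\emph{The nilpotent case.} Write $K \ce K(z_1,\dots,z_n,A)$. As a graded algebra, $K = A \otimes_{\Z} \bigwedge(e_1,\dots,e_n)$, with each $e_i$ of degree $1$. Any element of $K$ is a finite sum
$$
w=\sum_{I} a_I e_I,
$$
where $I$ runs over subsets of $\{1,\dots,n\}$ and $e_I$ is the corresponding exterior monomial. Since $K$ is graded commutative, it suffices to treat homogeneous $w$: a sum of finitely many commuting nilpotent elements is nilpotent, and the homogeneous components of an element of $K_{\ge 1}$ commute with each other up to sign. So take $w \in K_d$ with $d \ge 1$. Each summand $a_Ie_I$ with $I \ne \emptyset$ squares to zero, because $e_I^2=0$. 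Each summand $a_\emptyset$ lies in $A_d$ with $d \ge 1$, so it is nilpotent by hypothesis. All these summands commute up to sign; odd-degree ones square to zero. The binomial expansion of $w^N$ for $N$ large therefore vanishes, and $K$ is nilpotent.

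\emph{The derived case.} By definition there is a local quasi-isomorphism relating $A$ to a nilpotent local dg-algebra $A'$. The natural reading is a direct map $f\co A \to A'$ or $f \co A' \to A$, possibly a zigzag; for a zigzag, the argument below applies step by step. Suppose first $f \co A \to A'$. Since $f$ is local, $f_0(z_i) \in \m_{A'_0}$, and $f$ extends to a morphism of dg-algebras
$$
K(z_1,\dots,z_n,A) \to K(f(z_1),\dots,f(z_n),A')
$$
by sending $e_i \mapsto e_i$. This map is local. It is a quasi-isomorphism: filter both sides by the number of $e_i$'s. The associated graded pieces are direct sums of shifts of $A$ and $A'$, so $f$ induces quasi-isomorphisms on them. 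Since the filtrations are finite, the map is a quasi-isomorphism. Equivalently, $K(z,A)$ is obtained from $A$ by iterated mapping cones of multiplication by $z_i$, and $f$ induces a map of these iterated cones that is a quasi-isomorphism by the five lemma. The target is nilpotent by the first part, so $K(z,A)$ is derived nilpotent.

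\emph{Main obstacle.} The hard direction is $f\co A'\to A$, since the $z_i$ then need not lie in the image of $A'_0$. To handle it I would use that $H_0(A')\to H_0(A)$ is an isomorphism and that $A'_0 \to A'$ surjects onto $H_0$. This lets me choose $z_i' \in A'_0$ with $f(z_i') = z_i + \partial(b_i)$ for some $b_i \in A_1$. The $z_i'$ lie in $\m_{A'_0}$ because $f$ is local and $H_0$ is local. I then define
$$
K(z',A')\to K(z,A), \qquad e_i \mapsto e_i + b_i.
$$
This is compatible with the differentials, since $\partial(e_i+b_i) = z_i + \partial(b_i) = f(z_i')$. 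It is a morphism of graded-commutative algebras because $(e_i+b_i)^2=0$, both $e_i$ and $b_i$ having odd degree. The map is local, and the same filtration argument shows it is a quasi-isomorphism. Alternatively, if $f_0$ is surjective one can lift the $z_i$ directly, and I would first check whether the paper's notion allows reducing to that case.
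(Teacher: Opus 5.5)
Your proof is correct and follows the paper's own argument. The paper likewise reduces to the nilpotent case via the induced local quasi-isomorphism $K(\underline{z},A)\to K(f(\underline{z}),B)$, then notes that $K_{\ge 1}=A_{\ge 1}K+(e_i)K$ consists of sums of nilpotent elements. The paper only treats a quasi-isomorphism going out of $A$ into the nilpotent algebra, so your twisted map $e_i\mapsto e_i+b_i$ for the reverse direction (and zigzags) is extra, correct coverage of an ambiguity in the definition rather than a different route.
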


\begin{proof}
Write $\underline{z} \ce z_1, \dots, z_n$. Given a local quasi-isomorphism $f \co A \xra{\simeq} B$, one obtains an induced local~quasi-isomorphism $K(\underline{z},A) \xra{\simeq} K(f(\underline{z}),B)$, so we may assume $A$ is nilpotent. Write the Koszul complex $K(\underline{z},A)$ as $A[ x_1, \dots, x_n \delimit \partial(x_i)=z_i]$. We have
$
K(\underline{z},A)_{\ge 1}=A_{\ge 1}K(\underline{z},A)+(x_i)K(\underline{z},A).
$
Since $A$ is nilpotent, and $x_i^2=0$ for all $i$, every element in $K(\underline{z},A)_{\ge 1}$ is a sum of nilpotent elements and hence nilpotent.
\end{proof}

\begin{lemma}\label{nilpotenthomology}
Assume $A$ is nilpotent, and suppose $f \co A \to F$ is a map of local dg-algebras, where~$H_0(F)=\k$. Let~$z \in F$ be a cycle of positive degree such that $z \in \m_A \cdot F$.
\begin{enumerate}
\item The induced homology class~$\overline{z} \in H(F)$ is nilpotent.
\item Let $G=F\langle x \delimit \partial(x)=z \rangle$. If $H_i(G) = 0$ for $i \gg 0$, then $H_i(F) = 0$ for $i \gg 0$. 
\end{enumerate}
\end{lemma}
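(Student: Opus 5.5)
For part (1), the idea is to write $z$ explicitly in terms of elements of $\m_A$ and then show that a high power of $z$ becomes a boundary. Since $z \in \m_A\cdot F$, we may write $z = \sum_{j=1}^m f(a_j) w_j$ with $a_j \in \m_A$ homogeneous and $w_j \in F$.

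Split the $a_j$ into two groups. Those of positive degree are nilpotent, because $A$ is nilpotent. Those of degree $0$ lie in $\m_{A_0}$.

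Since $H_0(F) = \k$, the image of $\m_{A_0}$ in $F_0$ lies in $\m_{F_0}$. Moreover, $H_0(F)=\k$ means $\m_{F_0} = \del_F(F_1)$, the degree-zero boundaries. So each $f(a_j)$ with $|a_j|=0$ is a boundary $\del_F(c_j)$.

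Now expand $z^N$ for $N$ large. Graded commutativity lets us reorder the factors up to sign. Each monomial then either contains a power $f(a_j)^{n_j}=0$ coming from a positive-degree $a_j$, or contains at least one degree-zero factor $\del(c_j)$.

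It is cleaner to argue homologically. Set $I = \m_{A_0}F + A_{\ge1}F$, so that $z\in I$. The terms involving degree-zero coefficients are of the form $\del(c_j)w_j$. This is not obviously a boundary unless $w_j$ is a cycle, so here I instead use the fact that $H(F)$ is a graded $H_0(F)=\k$-algebra, on which $\m_{A_0}$ acts by zero.

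Choose $N$ larger than the sum of the nilpotency indices of the positive-degree $a_j$. Then in $z^N$, every surviving monomial contains at least one factor $f(a)$ with $a\in\m_{A_0}$. So $z^N \in \m_{A_0}F$, and we have $z^N = \sum_k r_k u_k$ with $r_k\in\m_{A_0}$.

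The obstacle is that the $u_k$ need not be cycles, so we cannot directly conclude that $z^N$ is a boundary. To address this, I would replace $A$ by an intermediate: pass to $F' = F\otimes$ something, or better, use that $\m_{A_0}$ annihilates $H(F)$ via a filtration argument.

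Concretely, filter $F$ by powers of the ideal $J=\m_{A_0}F$ (a dg-ideal, since $\del$ is $A_0$-linear), giving $J^kF$. Since $A_0$ is Noetherian local and $H(F)$ is killed by $\m$, the trick is to choose $N$ so large that $z^N\in \m_{A_0}^k$-multiples for huge $k$, and show that $\m_{A_0}^k F$ contributes boundaries in degree $|z^N|$.

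I expect this step — converting ``$z^N \in \m_{A_0}F$'' into ``$\overline z^N=0$'' — to be the main difficulty. I would first try the device of writing $r = \del(c)$ for $r\in\m_{A_0}$, with $c\in F_1$. Then in $z^{2N}$ one can group factors so that the expression becomes $\del(c\cdot(\text{cycle}))$, using that $z$ itself is a cycle: $z^{2N}=z^N\cdot z^N$, and one rewrites this Leibniz-style.

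For part (2), use the standard long exact sequence for adjoining a variable. If $|z|$ is even, so that $x$ is exterior, there is an exact sequence
$$\cdots \to H_i(F)\xra{\cdot \overline z} H_{i+|z|}(F)\to H_{i+|z|}(G)\to H_{i-1}(F)\to\cdots.$$
If $|z|$ is odd, so that $x$ is a divided power variable, the filtration by divided-power degree gives a similar sequence; equivalently, $G$ is the colimit of the exterior-type pieces, and the standard Gulliksen argument shows that multiplication by $\overline z$ induces isomorphisms $H_{i}(F)\to H_{i+|z|}(F)$ for $i\gg0$ whenever $H(G)$ is bounded.

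In either case, for large $i$ the map $\overline z\cdot$ is surjective onto $H_{i+|z|}(F)$. Iterating, every class of large degree is a multiple of $\overline z^N$, which is zero by (1). Hence $H_i(F)=0$ for $i\gg0$.
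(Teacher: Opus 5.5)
Your handling of part (2) when $|z|$ is even matches the paper. The sequence $0\to F\to G\to F[-|z|-1]\to 0$ has connecting map $\pm\overline z$, so $H_s(F)=\overline z\cdot H_{s-|z|}(F)$ for $s\gg 0$, and nilpotence of $\overline z$ finishes the argument.

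\emph{Part (1) has a genuine gap.} You correctly locate the obstacle: the terms $\del_F(c_j)w_j$ with $c_j\in F_1$ are not boundaries, because $w_j$ need not be a cycle. But you then offer only unexecuted ideas, and none of them closes the step. Your remark that $\m_{A_0}$ kills $H(F)$ does not help, for the same reason: the $w_j$ are not cycles. The $\m_{A_0}$-adic filtration argument would need some Artin--Rees-type control that you neither state nor prove. The ``Leibniz-style'' rewriting aims at the wrong target, since there is no reason for $z^{2N}$ to be a boundary of the form $\del(c\cdot(\text{cycle}))$.

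The missing observation is that you do not need to kill these terms, only to move the differential across and then use that $c_j$ has odd degree. Leibniz gives $\del_F(c_j)w_j=\del_F(c_jw_j)+c_j\del_F(w_j)$. Hence $z$ is homologous to the cycle
\[
w\ce \sum_{|a_j|>0} f(a_j)w_j+\sum_{|a_j|=0} c_j\del_F(w_j).
\]
Each summand of $w$ is homogeneous of degree $|z|$ and nilpotent. The first kind is nilpotent because $A$ is nilpotent. The second kind is nilpotent because $|c_j|=1$ forces $c_j^2=0$, so $(c_j\del_F w_j)^2=\pm c_j^2(\del_F w_j)^2=0$.

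Such a sum is nilpotent. If $|z|$ is odd, then $w^2=0$ outright. If $|z|$ is even, the summands commute, and a sum of commuting nilpotents is nilpotent. Therefore $\overline z=\overline w$ is nilpotent. This is exactly the paper's proof, and it needs no passage to powers $z^N$ and no filtration. Your intermediate fact $z^N\in\m_{A_0}F$ could be finished the same way, but only via this same observation that $c_j^2=0$.

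\emph{A secondary issue concerns part (2) when $|z|$ is odd.} You assert, rather than prove, that multiplication by $\overline z$ is eventually an isomorphism. The remarks about the divided-power filtration and the ``colimit of exterior-type pieces'' do not establish this. For instance, the first filtration step $0\to F\to F\oplus Fx\to F[-|z|-1]\to 0$ relates $H(F)$ to $H(F\oplus Fx)$, not to $H(G)$.

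The clean route is the paper's short exact sequence $0\to F\to G\to G[-|z|-1]\to 0$, where the second map is $\sum\alpha_ix^{(i)}\mapsto\sum\alpha_ix^{(i-1)}$. Its long exact sequence squeezes $H_{i-1}(F)$ between $H_{i-|z|-1}(G)$ and $H_{i-1}(G)$. This gives $H_i(F)=0$ for $i\gg0$ directly, without using part (1) at all.
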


\begin{proof}
Write $z=\sum_{i=1}^n a_ix_i+\sum^m_{j=1} r_jy_j$, where $a_i \in A_{\ge 1}$, $r_j \in f(\m_{A_0})$, and $x_i,y_i \in F$. Our assumption~$H_0(F)=\k$ implies that each $r_j$ is a boundary. Choose $b_1,\dots,b_m \in F$ such that~$\partial_F(b_j)=r_j$ for all $j$, and set $w \ce \sum_{i=1}^n a_ix_i+\sum^m_{j=1} b_j\partial_F(y_j)$. We have: 
\[z-w=\sum^m_{j=1} r_jy_j-\sum^m_{j=1} b_j\partial_F(y_j)=\partial_F\left(\sum^n_{j=1}b_jy_j\right).\]
In particular, $\overline{z}=\overline{w}$ in $H(F)$. Since $|b_j|=1$ for all $j$, each $b_j\partial_F(y_j)$ is nilpotent. Thus, $w$ is the sum of nilpotent elements and is hence nilpotent as well. This proves (1).

We now prove (2). We first assume $d \ce |z|$ is odd, so that $|x| = d+1$ is even. In this case, there is a short exact sequence of dg-$F$-modules
\begin{equation}
\label{eqn:ses1}
0 \rightarrow F \into G \to G[-d-1] \rightarrow 0
\end{equation}
such that the first map is the inclusion, and the second map sends $\sum_{i = 0}^\ell \alpha_i x^{(i)}$ to $\sum_{i = 1}^\ell \alpha_i x^{(i-1)}$, where~$\alpha_0, \dots, \alpha_\ell \in F$~\cite[pp. 18]{GL69}. The statement in this case now follows immediately from the long exact sequence in homology associated to~\eqref{eqn:ses1}.

Suppose now that $d$ is even. In this case, there is a short exact sequence
\begin{equation}
\label{eqn:ses2}
0 \rightarrow F \into G \to F[-d-1] \rightarrow 0,
\end{equation}
where the first map is the inclusion, and the second sends $\alpha + \beta x$ to $\beta$, where $\alpha, \beta \in~F$~\cite[pp. 18--19]{GL69}. The connecting map $\delta \co H_{i-d-1}(F) \to H_{i - 1}(F)$ in the long exact sequence associated to~\eqref{eqn:ses2} is given by multiplication by $\overline{z}$, up to a sign~\cite[Lemma 1.3.3]{GL69}. Thus, choosing $N$ such that $H_s(G) = 0$ for all $s > N$, we have $H_{s}(F) = \overline{z} \cdot H_{s- d}(F)$ for all $s > N$. By (1), $\overline{z}$ is nilpotent, and so we conclude that $H_i(F) = 0$ for $i \gg 0$. 
\end{proof}

The following application of Lemma~\ref{nilpotenthomology} is an analogue of \cite[Proposition 1.5]{gulliksen2} for dg-algebras, and it plays a crucial role in the proof of Theorem~\ref{derivedcithm}. The proof is nearly identical to Gulliksen's, but we include it for completeness.

\begin{theorem}\label{odddeviationliftstoeven}
Suppose $A$ is derived nilpotent, and assume $\epsilon^A_q(\kk) \ne 0$ for some odd integer $q > 1$. There is an even integer $i>q$ such that $\epsilon^A_i(\kk) \ne 0$. 
\end{theorem}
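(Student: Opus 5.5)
Following Gulliksen's argument in \cite[Proposition 1.5]{gulliksen2}, I will argue by contradiction, using Lemma~\ref{nilpotenthomology}(2) to peel off variables one degree at a time.

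\textbf{Reductions.} First I replace $A$ by a nilpotent dg-algebra. A local quasi-isomorphism $A \xra{\simeq} A'$ should induce an isomorphism of acyclic closures of $\k$ after base change, since $A'\otimes_A A\langle Y\rangle \to \k$ is again a quasi-isomorphism and the differential stays minimal. So the deviations are invariant, and I may assume $A$ is nilpotent. Next, let $F \ce A\langle Y\rangle$ be the acyclic closure of $\k$ over $A$ (Corollary~\ref{minmodelscor}). Suppose, for contradiction, that $Y_i=\emptyset$ for every even $i>q$. Each $Y_i$ is finite (Corollary~\ref{cor:finite}). Put $F' \ce A\langle Y_{\le q}\rangle$, so that $F = F'\langle Y_{>q}\rangle$ and all adjoined variables of degree $>q$ are odd (exterior).

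\textbf{Key step: $H(F')$ is bounded.} Since $F\simeq \k$, $H(F)$ is bounded. The variables of $Y_{>q}$ are adjoined in order of degree, and there are possibly infinitely many of them (only finitely many in each degree). I therefore intend to run Lemma~\ref{nilpotenthomology}(2) backwards, one variable at a time, which requires two hypotheses at each stage. First, the intermediate algebras must satisfy $H_0=\k$; this holds because $q>1$, so adjoining variables of degree $\ge 2$ does not change $H_0$, and $H_0(A\langle Y_1\rangle)=\k$ already. Second, each adjoined cycle must lie in $\m_A\cdot(\text{intermediate algebra})$; this follows from minimality of $F$, since the differential of a variable lands in $\m_A F + Y^{(2)}F$. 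I then need to handle the decomposable part, either by checking that Lemma~\ref{nilpotenthomology}(1) still applies (products of positive-degree cycles are nilpotent up to boundaries because $A$ is nilpotent), or by directly citing Lemma~\ref{nilpotenthomology} in the form its proof gives. Since there may be infinitely many variables, I will instead use a degree argument. In a fixed homological degree $n$, only the variables of degree $\le n+1$ affect $H_n$. This lets me approximate $F'$ by a finite extension $F'\langle y_1,\dots,y_k\rangle$ that agrees with $F$ through degree $N$ for large $N$. Along a finite chain of odd-degree exterior extensions, the short exact sequence~\eqref{eqn:ses2} and the nilpotency of $\overline{z}$ propagate vanishing in high degrees, but only for a \emph{finite} number of steps. \textbf{This is the main obstacle}: making the backwards induction work with infinitely many odd variables. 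I would try to show that $H_s(F')=\overline{z_1}\cdots\overline{z_k}\cdot H_{s-\sum d_j}(F')$ for the first $k$ variables, for all $s$ large relative to $N$, and then use nilpotency together with the uniform bound given by Gulliksen's original argument.

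\textbf{Contradiction.} Granting that $H(F')$ is bounded, I examine $y\in Y_q$ with $q$ odd, so $y$ is a divided power variable of even degree $q+1$... wait, that is $Y_{q+1}$. Instead, $y\in Y_q$ has odd degree $q$ and $\partial y=z$ of even degree $q-1$. Then the divided powers of the even-degree variables in $Y_{\le q}$ (for example, degree $q-1$ or lower) produce unbounded homology. To see this, take the top even degree $e\le q-1$ with $Y_e\neq\emptyset$, which exists because $Y_q\ne\emptyset$ forces a nonzero degree $q-1$ cycle in $A\langle Y_{<q}\rangle$, hence, by minimality, either even variables or $A$-homology in that degree. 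The sequence~\eqref{eqn:ses1}, applied to $G=F''\langle y\rangle$ with $|y|$ even, then gives $H_{s}(G)\to H_{s-|y|}(G)$ surjective onto nonzero classes for large $s$. Specifically, the class of $y^{(m)}$ plus corrections survives, contradicting boundedness. This mirrors Gulliksen's use of the Tate--Gulliksen fact that an acyclic closure with a nonzero odd deviation must continue with even variables.
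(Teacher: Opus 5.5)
Your proposal has a genuine gap at the step you flag yourself, and the concluding contradiction does not work either. For the key step, Lemma~\ref{nilpotenthomology}(2) only transfers boundedness \emph{backwards}, starting from an extension whose homology is already known to be bounded. No finite stage $F'\langle Y_{q+1},\dots,Y_N\rangle$ has bounded homology: it agrees with $F\simeq\k$ only in degrees below about $N$, and above that it carries cycles that are killed only by later variables. So the backwards induction has nowhere to start, and with infinitely many odd variables the ``$i\gg 0$'' thresholds are not uniform. Nothing in your sketch fixes this.

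Even granting that $H(F')$ is bounded, your final step is unsupported. An even divided power variable in some degree $e\le q-1$ does not force unbounded homology: the acyclic closure of $\k$ over a hypersurface (Example~\ref{ex:shamash}) has one and is quasi-isomorphic to $\k$. The sequence \eqref{eqn:ses1} only gives eventual periodicity of $H(G)$, not non-vanishing, and the claim that ``the class of $y^{(m)}$ plus corrections survives'' is not justified. Most importantly, your contradiction never uses the nilpotence of $\overline{v_1}$, where $v_1=\del(y_1)$ for $y_1\in Y_q$. That is where the nilpotency hypothesis on $A$ must enter.

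The missing idea is to aim not for boundedness but for vanishing in a \emph{finite window fixed in advance}, and to exploit parity. Write $V(q)=V(q-1)\langle y_1,\dots,y_m\rangle$ with $\del(y_i)=v_i$, and set $L\ce V(q-1)\langle y_2,\dots,y_m\rangle$. Use Lemma~\ref{nilpotenthomology}(1) to choose $p$ with $\overline{v_1}^{\,p}=0$ in $H(L)$, and put $Q\ce pq-p+2$. Suppose there are no even-degree variables in degrees $q<i\le Q$. Then $V(Q)=L\langle w_1,\dots,w_s\rangle$, with $w_1=y_1$, is a \emph{finite} chain $K^0\ce L\subseteq K^1\subseteq\cdots\subseteq K^s=V(Q)$ of exterior extensions. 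Their defining cycles $u_j$ all have even degree, and $H_i(V(Q))=0$ for $0<i<Q$.

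The connecting maps of \eqref{eqn:ses2} are multiplication by $\overline{u_j}$, which preserves parity. Descending induction on $j$ therefore gives $H_i(K^j)=0$ for all odd $i<Q$ and all $j$. Hence multiplication by $\overline{u_j}$ is injective from $H_{i-|u_j|}(K^{j-1})$ to $H_i(K^{j-1})$ for even $i<Q-1$. For $j=1$, $\overline{v_1}^{\,p}$ then acts injectively from $H_0(L)=\k$ (nonzero since $q\ge 3$) into $H_{p(q-1)}(L)$, contradicting $\overline{v_1}^{\,p}=0$. This avoids the infinitely many variables entirely and even bounds the degree of the resulting even deviation by $pq-p+2$.
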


\begin{proof}
We may assume $A$ is nilpotent.
Let $V$ be the acyclic closure of $\k$. We write
$$
V(q) = V(q-1)~\langle y_1, \dots, y_m \delimit \del(y_i) = v_i \rangle, \quad \text{and} \quad L \ce V(q-1)\langle y_2, \dots, y_m \rangle.
$$
Applying~Lemma~\ref{nilpotenthomology}(1), choose $p \gg 0$ such that the homology class~$\overline{v_1} \in H(L)$ induced by $v_1$ satisfies~$\overline{v_1}^p =~0$. We show that $\epsilon_i^A(\k) \ne 0$ for some even integer~$i$ such that~$q < i \le Q \ce pq - p + 2$. Assume toward a contradiction that this is not the case. Write~$V(Q) = L^1\langle w_1, \ldots, w_s;\, dw_j = u_j \rangle$, and assume $w_1 = y_1$, so that $u_1 = v_1$. By assumption, each $w_i$ has odd degree. Set $K^0 \ce L$, and let 
$
K^j \ce K^{j-1}\langle w_j \rangle
$
for $j = 1, \dots, s.$
We prove by descending induction on $j$ that~$H_i(K^j) = 0$ for all odd $i < Q$. 
The statement is true for~$j = s$ since~$H_i(V(Q)) = 0$ for all $0 < i < Q$.
Assume the statement holds for $j \le i \le s$. Let us write~$\mu_j \ce |u_j|$. Just as in the proof of Lemma~\ref{nilpotenthomology}(2) (see~\eqref{eqn:ses2}), we have a short exact sequence
\[
0 \to K^{j-1} \to K^j \to K^{j-1}[-\mu - 1] \to 0.
\]
We therefore have the following exact sequence for all $i$:
\begin{equation}\label{eq:8}
H_{i+1}(K^j) \to H_{i-\mu_j}(K^{j-1}) \xrightarrow{\delta} H_i(K^{j-1}) \to H_i(K^j),
\end{equation}
where $\delta$ is the connecting homomorphism. Since $\mu_j$ is even, $i - \mu_j$ is odd if and
only if $i$ is odd. By induction, we obtain from \eqref{eq:8} a surjection
$
H_{i-\mu_j}(K^{j-1}) \twoheadrightarrow H_i(K^{j-1})
$
for all odd $i < Q$. Thus, for all $a \ge 1$ and all odd $i < Q$ we
have a surjection
$
H_{i - a\mu_j}(K^{j-1}) \twoheadrightarrow H_i(K^{j-1}).
$
By choosing~$a$ sufficiently large so that $Q < a\mu_j$, we conclude that $H_i(K^{j-1}) = 0$ for all odd $i < Q$. 

Thus, for $1 \le j \le s$, the exact sequence \eqref{eq:8} yields an injection 
\begin{equation}\label{eq:9}
\delta\colon H_{i-\mu_j}(K^{j-1}) \hookrightarrow H_i(K^{j-1})
\end{equation}
for all even $i < Q - 1$. Recall from the proof of Lemma~\ref{nilpotenthomology} that $\delta$ is given by multiplication by~$\overline{u_j}$, up to a sign~\cite[Lemma 1.3.3]{GL69}. Now, take $j = 1$, and recall that $u_1 = v_1$. Since we have $p \mu_1 = p(q-1) < Q-1$, there is an injection
$
\delta^p \co H_0(L) \into H_{p\mu_1}(L).
$
But $\overline{v_1}^p = 0$, so this implies~$H_0(L) = 0$, a contradiction. 
\end{proof}

\section{Polynomial Growth of Betti Numbers}\label{sec:polynomial}

We adopt the notation of Setup~\ref{setup}.

\begin{lemma}\label{koszuloverboundary}
If $H_0(A) = \k$, then for any $x \in \m_{A_0}$ and $i \in \Z$, we have:
\[\epsilon_i^{K(x,A)}(\k)=\begin{cases} 0, & i \le 1; \\ \epsilon^A_i(\k)+1, & i=2; \\ \epsilon_i^A(\k), & i>2. \end{cases}\] 
\end{lemma}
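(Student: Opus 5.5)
The plan is to write down an explicit minimal semifree $\Gamma$-extension of $K \ce K(x,A)$ that resolves $\k$. I would then read off its variable counts and invoke the uniqueness statement of Corollary~\ref{minmodelscor} (Theorem~\ref{thm:minmodelexist} with $s=0$).

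\emph{The case $i\le 1$.} Since $H_0(A)=\k$, the augmentation $p_0\colon A\to\k$ induces an isomorphism on $H_0$. Hence $H_1(\cone(p_0))=0$, and by Lemma~\ref{lem:mingens} the acyclic closure $V\ce A\langle Y\rangle$ of $\k$ over $A$ has $Y_1=\emptyset$. So $\epsilon_1^A(\k)=0$. The same argument applied to $K$ gives $\epsilon_1^K(\k)=0$, because $H_0(K)=H_0(A)/(x)=\k$. This settles $i\le 1$.

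\emph{Rewriting $K$.} Because $H_0(A)=\k$ and $x\in\m_{A_0}$, the element $x$ is a boundary. Choose $a\in A_1$ with $\del_A(a)=x$. In $K=A[e \delimit \del(e)=x]$ set $e'\ce e-a$. Then $e'$ is an odd-degree cycle with $(e')^2=0$, and $K=A\otimes_{\Z}\bigwedge(e')$ with $\del(e')=0$. Now adjoin a divided power variable $y$ of degree $2$ with $\del(y)=e'$, and then adjoin the variables $Y$ with the same differentials they have in $V$. These differentials make sense because $V\subseteq W$. The result is
$$
W\ce K\langle y\rangle\langle Y\rangle \cong V\otimes_{\Z}\bigl(\textstyle\bigwedge(e')\langle y \delimit \del(y)=e'\rangle\bigr).
$$
This is an isomorphism of dg-algebras, since $\del(Y)\subseteq V$ involves neither $e$ nor $y$. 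The second tensor factor is the standard acyclic complex $\Z\langle y\rangle\otimes\bigwedge(e')$ with $\del(y^{(i)})=e'y^{(i-1)}$; it is $\Z$-free with homology $\Z$ in degree $0$. Hence $W\simeq V\simeq \k$ via the map extending $V\to\k$ by sending $e', y\mapsto 0$, and this map is local.

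\emph{Minimality and conclusion.} We have $\del(y)=e-a$, and $e-a\in K_{\ge1}\subseteq\m_K$. Also $\del(Y)\subseteq \m_A V+Y^{(2)}V\subseteq \m_K W+Y^{(2)}W$ by minimality of $V$. So $W$ is a minimal local semifree $\Gamma$-extension of $K$, with all variables of positive degree and finitely many in each degree, quasi-isomorphic to $\k$. Thus $W$ is an acyclic closure of $\k$ over $K$. Uniqueness (Corollary~\ref{minmodelscor}) then gives $\epsilon_i^K(\k)=|Y_i|$ for $i\ne2$ and $\epsilon_2^K(\k)=|Y_2|+1$, which is the claimed formula.

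The step needing the most care is the dg-algebra isomorphism in the display together with the acyclicity of the $(e',y)$ factor. In particular one must check that the Leibniz rule and divided power structure are compatible with the change of variables $e\mapsto e-a$. That compatibility holds because $a$ and $e$ are both odd, so $(e-a)^2=0$ by graded commutativity.
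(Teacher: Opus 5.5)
Your proposal is correct and is essentially the paper's proof. You use the same extension $K(x,A)\langle Y\rangle\langle y \delimit \partial(y)=e-a\rangle$, the same minimality check ($\partial(y)=e-a\in\m_{K(x,A)}$ and $\partial(Y)=\partial_V(Y)$), and the same appeal to uniqueness of acyclic closures. The only difference is cosmetic: you verify that this extension is quasi-isomorphic to $\k$ by rewriting it, via $e'=e-a$, as $V\otimes_{\mathbb{Z}}\bigl(\bigwedge(e')\langle y\rangle\bigr)$ with an acyclic second factor, whereas the paper passes through $A\langle Y\rangle\otimes_A K(x,A)\simeq\k[e]$ and then kills $e$.
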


\begin{proof}
Let $U \ce A\langle Y \rangle$ be the acyclic closure of $\k$ over $A$. Choose $a \in A_1$ such that $\del_A(a) =~x$. Write $K(x, A)$ as $A[e \delimit \del(e) = x]$. The composition
$$
p \co K(x,A) \langle Y \rangle = A\langle Y \rangle \otimes_A K(x,A) \xra{\simeq} \k \otimes_A K(x,A) \xra{\cong} \k[e]
$$
is a quasi-isomorphism.
The element $e- a \in K(x,A) \langle Y \rangle$ is a degree 1 cycle, and~$p(e-a) = e$. The map~$p$ thus induces a quasi-isomorphism $K(x,A) \langle Y \rangle \langle W \delimit \partial(W)=e-a\rangle \xra{\simeq} \k[e]\langle W \delimit \partial(W)=e \rangle$, and so $K(x,A) \langle Y \rangle \langle W \delimit \partial(W)=e-a\rangle$, which we henceforth abbreviate to $K(x,A) \langle Y, W \rangle$, is quasi-isomorphic to $\k$. We have $\partial_{K(x,A)\langle Y,W \rangle}(Y_i)=\partial_{U}(Y_i) \in \m_A \subseteq \m_{K(x,A)\langle Y ,W \rangle}$ for any $Y_i$, and  $\partial_{K(x,A)\langle Y ,W \rangle}(W)=e-a \in \m_{K(x,A)\langle Y , W \rangle}$. Thus, $K(x,A)\langle Y ,W \rangle$ is a minimal dg-$K(x,A)$-module. We conclude that $K(x,A)\langle Y , W \rangle$ is the acyclic closure of $\k$ over $K(x,A)$, and the result follows. 
\end{proof}

\begin{prop}\label{deviationscompare}
 Let $m$ and $n$ denote the embedding dimensions of $A_0$ and $H_0(A)$, respectively. For all~$i \in \Z$, we have:
\[\epsilon^{K(\m_{A_0},A)}_i(\k)=\begin{cases} 0, &  i \le 1; \\
\epsilon^A_i(\k)+m-n, &  i=2; \\ \epsilon_i^A(\k), & i>2. \end{cases}\]
\end{prop}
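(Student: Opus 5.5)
The plan is to reduce to Lemma~\ref{koszuloverboundary} by a careful choice of generators of $\m_{A_0}$, after first treating the case where the Koszul variables kill a minimal generating set of the maximal ideal of $H_0(A)$. Since the Koszul complex $K(\m_{A_0},A)$ depends, up to isomorphism of dg-$A$-algebras, only on the ideal $\m_{A_0}$ and not on the chosen minimal generating set, we are free to choose the generators.

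\textbf{Choice of generators.} Let $\m_H$ denote the maximal ideal of $H_0(A)=A_0/\del_A(A_1)$. The surjection $\m_{A_0}/\m_{A_0}^2 \onto \m_H/\m_H^2$ has kernel of dimension $m-n$, and this kernel is spanned by the images of elements of $\del_A(A_1)\cap \m_{A_0}$. Choose $x_{n+1},\dots,x_m \in \del_A(A_1)$ whose images form a basis of this kernel. Then extend by $x_1,\dots,x_n\in\m_{A_0}$ so that $x_1,\dots,x_m$ is a minimal generating set of $\m_{A_0}$. The images of $x_1,\dots,x_n$ then minimally generate $\m_H$.

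\textbf{First step: $K'\ce K(x_1,\dots,x_n,A)$.} By Lemma~\ref{lem:mingens}, applied to the acyclic closure $U=A\langle Y\rangle$ of $\k$ (switching degree $0$), the degree-one variables correspond to minimal generators of $H_1(\cone(A\to\k))\cong \m_H$. By the uniqueness in Theorem~\ref{thm:minmodelexist}, we may therefore take $Y_1=\{y_1,\dots,y_n\}$ with $\del(y_j)=x_j$. Since degree-one variables are exterior, $A\langle Y_1\rangle = K'$, and so $U = K'\langle Y_{\ge 2}\rangle$ with the same differentials.

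This is a semifree $\Gamma$-extension of $K'$ that is quasi-isomorphic to $\k$. It is also minimal over $K'$: indeed $\m_{K'}\supseteq \m_A + (y_1,\dots,y_n)$, so the minimality condition for $U$ over $A$ implies the one over $K'$. (Terms involving $y_j$ that were counted in $Y^{(2)}$ or $XY$ now lie in $\m_{K'}U$.) By uniqueness of acyclic closures (Corollary~\ref{minmodelscor}), we get $\epsilon^{K'}_i(\k)=0$ for $i\le 1$ and $\epsilon^{K'}_i(\k)=\epsilon^A_i(\k)$ for $i\ge 2$. Moreover $H_0(K')=H_0(A)/(x_1,\dots,x_n)=\k$.

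\textbf{Second step: the boundaries $x_{n+1},\dots,x_m$.} We have $K(\m_{A_0},A)\cong K(x_m, K(x_{m-1},\dots K(x_{n+1},K')\cdots))$. Each $x_j$ with $j>n$ is a boundary in $A$, hence in every intermediate Koszul complex, and each intermediate complex has $H_0=\k$. Applying Lemma~\ref{koszuloverboundary} $m-n$ times, each application adds $1$ to $\epsilon_2$, leaves $\epsilon_i$ unchanged for $i>2$, and keeps $\epsilon_i=0$ for $i\le 1$. This yields the stated formula.

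The main obstacle is the first step: justifying that the acyclic closure over $A$ can be regarded as a minimal acyclic closure over $K'$. This requires checking the minimality condition of Proposition~\ref{prop:semimin} relative to $K'$, and in particular that the coefficients of degree-$2$ variables in the differentials of degree-$3$ variables still lie in $\m_{K'_0}=\m_{A_0}$. It also requires the small linear-algebra point that $\ker(\m_{A_0}/\m_{A_0}^2\to\m_H/\m_H^2)$ is spanned by boundaries.
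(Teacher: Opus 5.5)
Your proposal is correct and follows essentially the same route as the paper: you identify $K(x_1,\dots,x_n,A)$ with $U(1)$ for the acyclic closure $U$ of $\k$ over $A$, read off the deviations over this Koszul complex, and then apply Lemma~\ref{koszuloverboundary} once for each of the remaining $m-n$ generators. The differences are cosmetic: you spell out the minimality check over $K'$, which the paper leaves implicit. Your choice of $x_{n+1},\dots,x_m\in\del_A(A_1)$ is harmless but unnecessary, since $H_0(K')=\k$ already makes every element of $\m_{A_0}$ a boundary in $K'$.
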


\begin{proof}
Let $U$ be the acyclic closure of $\k$ over $A$, and let $(x_1,\dots,x_n)$ be a minimal generating set of $\m_{A_0}/\im(\partial_A(A_1))$. The construction of $U$ in Theorem \ref{thm:minmodelexist} implies $K(x_1, \dots, x_n,A)=U(1)$. Thus,  
\begin{equation}
\label{eqn:epsilon}
\epsilon^{K(x_1, \dots, x_n,A)}_i(\k)=\begin{cases} 0, &  i \le 1; \\ \epsilon^{A}_i(\k), &  i>1. \end{cases}
\end{equation}
Extending $x_1,\dots,x_n$ to a minimal generating set $x_1,\dots,x_n,y_{n+1},\dots,y_m$ for $\m_A$ and repeatedly applying Lemma \ref{koszuloverboundary} gives:
\[\epsilon^{K(\m_{A_0},A)}_i(\k)=\begin{cases} 0, &  i \le 1; \\ \epsilon^{K(x_1, \dots, x_n,A)}_i(\k)+m-n, & i=2; \\ \epsilon^{K(x_1, \dots, x_n,A)}_i(\k), &  i>2. \end{cases}\]
Combining this calculation with~\eqref{eqn:epsilon} completes the proof. 
\end{proof}

\begin{prop}\label{quasi-isofibers}
Suppose there is a surjective ring homomorphism $S \onto A_0$, where $S$ is a regular local ring with maximal ideal $\m_S$. Let $V$ be the minimal model of $A$ over $S$, and let $U \ce V \langle Y \rangle$ be the acyclic closure of the Koszul complex $K(\m_S,V)$. There is a commutative diagram
\[\begin{tikzcd}[cramped]
	V & {U(1)=K(\m_S,V)} & {U(2)} & \cdots & {U(i)} & \cdots \\
	V & {\k \otimes_S V} & {\k \otimes_{V(1)} V } & \cdots & {\k \otimes_{V(i-1)} V} & \cdots
	\arrow[from=1-1, to=1-2]
	\arrow[equals, from=1-1, to=2-1]
	\arrow[from=1-2, to=1-3]
	\arrow[from=1-2, to=2-2]
	\arrow[from=1-3, to=1-4]
	\arrow[from=1-3, to=2-3]
	\arrow[from=1-4, to=1-5]
	\arrow[from=1-5, to=1-6]
	\arrow[from=1-5, to=2-5]
	\arrow[from=2-1, to=2-2]
	\arrow[from=2-2, to=2-3]
	\arrow[from=2-3, to=2-4]
	\arrow[from=2-4, to=2-5]
	\arrow[from=2-5, to=2-6]
\end{tikzcd}\]
where the top horizontal maps are the canonical inclusions, the bottom horizontal maps are the canonical surjections, and the vertical maps are surjective quasi-isomorphisms (see Notation~\ref{nota:Ui} for the meaning of $U(i)$ and $V(i)$ for $i \ge 0$). In particular, letting $m$ and $n$ denote the embedding dimensions of $A_0$ and~$H_0(A)$, respectively, we have: 
$$
n^S_i(A) 
 = 
\begin{cases} 0, &  i \le 0; \\ \epsilon^A_{i+1}(\k)+n-m, &  i=1; \\ \epsilon^A_{i+1}(\k), &  i>1. \end{cases}
$$
\end{prop}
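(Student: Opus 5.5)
The plan is to build the vertical maps $U(i) \to \k \otimes_{V(i-1)} V$ by induction on $i$. I will check at each stage that the new variables $Y_{i+1}$ of the acyclic closure correspond bijectively to $X_i$ for $i \ge 2$. The numerical formula then follows from $n_i^S(A) = |X_i|$ and $\epsilon_{i+1} = |Y_{i+1}|$, together with a separate count in low degree.

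\textbf{Base of the induction.} Write $V = S[X]$; switching degree $\infty$ means only polynomial and exterior variables appear. Since $S$ is regular, $K(\m_S,S) \to \k$ is a quasi-isomorphism. Since $V$ is semifree over $S$, tensoring with $V$ gives a surjective quasi-isomorphism $U(1) = K(\m_S,V) = K(\m_S,S)\otimes_S V \to \k\otimes_S V$.

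\textbf{Structure of the bottom row.} As a graded algebra, $\k \otimes_{V(i-1)} V$ is $\k[X_{\ge i}]$ with the induced differential. Minimality of $V$ gives $\partial(X_i) \subseteq \m_S V + X^2 V$. After killing $\m_S$ and $X_{<i}$, the image lands in $(X_{\ge i})^2$, which vanishes in degree $i-1$. Hence every $x \in X_i$ is a cycle. The same degree count shows that the degree-$i$ part is $\k X_i$ and that there are no boundaries in degree $i$, so
$$H_i(\k\otimes_{V(i-1)}V) \cong \k X_i,$$
with basis $X_i$, and $\k\otimes_{V(i)}V = (\k\otimes_{V(i-1)}V)/(X_i)$.

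\textbf{Inductive step.} Assume $U(i) \to \k\otimes_{V(i-1)}V$ is a surjective quasi-isomorphism. Then $H_i(U(i)) \cong \k X_i$, and the acyclic closure construction of Theorem~\ref{thm:minmodelexist} adjoins exactly one variable $y_x \in Y_{i+1}$ per $x \in X_i$. Here we use that $H_{i+1}(\cone(U(i)\to\k)) \cong H_i(U(i))$ for $i \ge 1$, and that $H_i(U(i))$ is a $\k$-vector space. Because the map is a surjective quasi-isomorphism, each $x$ lifts to a cycle $c_x \in U(i)$, and we may take $\partial y_x = c_x$. Extend the map to $U(i+1)$ by composing with the canonical surjection to $\k\otimes_{V(i)}V$ and sending $y_x \mapsto 0$; this is a chain map because $x$ dies in the quotient. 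This gives the commutative diagram.

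\textbf{Main obstacle: the quasi-isomorphism check.} The hard step is showing $U(i+1) \to \k\otimes_{V(i)}V$ is a quasi-isomorphism. I would factor it through $(\k\otimes_{V(i-1)}V)\langle y_x : \partial y_x = x\rangle$. The first map is a quasi-isomorphism by base change along the surjective quasi-isomorphism $U(i) \to \k\otimes_{V(i-1)}V$: adjoining the same variables to both sides is semifree, so a filtration by divided-power degree gives a spectral sequence comparison. For the second map, $\k\otimes_{V(i-1)}V$ is semifree over the subalgebra $\k[X_i]$ (zero differential, by the computation above). Moreover $\k[X_i]\langle Y_{i+1}\rangle \simeq \k$ is a Tate resolution: for $i$ odd it is an exterior algebra with divided powers, for $i$ even a polynomial algebra with exterior variables, i.e.\ a Koszul complex. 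Tensoring over $\k[X_i]$ then gives $(\k\otimes_{V(i-1)}V)\otimes_{\k[X_i]}\k = \k\otimes_{V(i)}V$. The semifreeness over $\k[X_i]$ needs care because $\partial(X_{>i})$ may involve $X_i$; I would handle it by filtering by the variables $X_{>i}$ in order of degree.

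\textbf{The count.} For $i \ge 2$, the steps above give $n_i^S(A) = |X_i| = |Y_{i+1}| = \epsilon_{i+1}^{K(\m_S,V)}(\k)$. This equals $\epsilon_{i+1}^A(\k)$ by Proposition~\ref{deviationscompare}, applied to $V$ in place of $A$. That application is justified because deviations are invariant under the local quasi-isomorphism $V \simeq A$, by base-changing acyclic closures. For $i = 1$, the same bijection $Y_2 \leftrightarrow X_1$ holds, and the degree-$2$ case of Proposition~\ref{deviationscompare} introduces the embedding-dimension correction. Values $i \le 0$ are $0$ because $X = X_{\ge 1}$. The $i=1$ constant is where I am least certain. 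A naive reading of Proposition~\ref{deviationscompare} for $V$ gives $\epsilon_2^A(\k) + \operatorname{edim}(S) - n$, which differs from the stated $\epsilon_2^A(\k) + n - m$ unless $S$ is a minimal presentation. I would reconcile the two by tracking precisely which embedding dimensions enter.
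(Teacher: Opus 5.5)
Your construction of the vertical maps is the paper's argument. It has the same base case and the same lift of the basis $X_i$ of $H_i(\k\otimes_{V(i-1)}V)$ to cycles in $U(i)$. It also uses the same factorization through $(\k\otimes_{V(i-1)}V)\langle y_x \delimit \partial y_x = x\rangle$, followed by the Koszul/Tate resolution $\k[X_i]\langle Y_{i+1}\rangle\xra{\simeq}\k$. The paper phrases the last step as tensoring over $V(i)$, via $\k\otimes_{V(i-1)}V\cong\k[X_i]\otimes_{V(i)}V$ and the semifreeness of $V$ over $V(i)$. That identification also settles the semifreeness worry you raise. It exhibits $\k\otimes_{V(i-1)}V$ as a base change of the semifree extension $V(i)\subseteq V$, so it is free over $\k[X_i]$ and bounded below.

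For the count you take a slightly different route. You apply Proposition~\ref{deviationscompare} to $V$ itself, whose degree-$0$ part is $S$, and use invariance of deviations under $V\simeq A$. The paper instead invokes $K(\m_S,V)\simeq K(\m_{A_0},A)$ and applies the proposition to $A$. That quasi-isomorphism tacitly needs $\operatorname{edim}(S)=m$, so your route is the more general one, and your value $n_1^S(A)=\epsilon_2^A(\k)+\operatorname{edim}(S)-n$ is correct.

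Your closing diagnosis, however, is wrong. For a minimal presentation you get $\epsilon_2^A(\k)+m-n$, which still disagrees with the printed $\epsilon_2^A(\k)+n-m$ unless $m=n$. There is nothing to reconcile: the printed constant has its sign reversed, and the paper's own argument (through Proposition~\ref{deviationscompare}) produces $m-n$. For instance, take $S=A_0=\k\llbracket t\rrbracket$ and $A=K(t,A_0)$. Then $V=A$, so $n_1^S(A)=1$. Also $A\simeq\k$ gives $\epsilon_i^A(\k)=0$ for all $i$, with $m=1$ and $n=0$. The corrected formula gives $1$, while the printed one would give $-1$.
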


\begin{proof}
We construct the vertical maps $U(i) \to \k \otimes_{V(i-1)} V$ via induction on $i$. Since $S$ is regular, the canonical map $K(\m_S, S) \to \k$ is a surjective quasi-isomorphism. We therefore have a quasi-isomorphism~$U(1) = K(\m_S, V) \cong K(\m_S, S) \otimes_S V \xra{\simeq} \k \otimes_S V$, which gives the base case $i = 1$ (noting that $V(0) = S$). Suppose we have surjective quasi-isomorphisms $U(j) \xra{\simeq} \k \otimes_{V(j-1)} V$ making the diagram commute for $j \le i$. The $\k$-vector space $H_i(\k \otimes_{V(i-1)} V)$ has a basis given by the degree~$i$ variables $x_1, \dots, x_t$ in $V$. Let $z_1,\dots,z_t$ be cycles in $U(i)$ mapping to $x_1, \dots,x_t$, so that $z_1,\dots,z_t$ descend to a $\k$-basis of $H_i(U(i))$. We therefore have 
$
U(i+1)=U(i)\langle y_1,\dots,y_n \delimit \partial_U(y_i)=z_i \rangle.
$
By induction, there is a surjective quasi-isomorphism
$$
U(i+1) = U(i)\langle y_1,\dots, y_n \delimit \partial(y_i)=z_i \rangle \xra{\simeq} (\k \otimes_{V(i-1)} V)\langle y_1,\dots,y_n \delimit \partial(y_i)=x_i \rangle.
$$
We have an isomorphism $\k \otimes_{V(i-1)} V \cong \k[x_1, \dots, x_n] \otimes_{V(i)} V$ of dg-$\k$-algebras. Here, $\k[x_1, \dots, x_n]$ is considered as a semifree polynomial extension of $\k$ (Definition~\ref{defn:semifree}), so it is a polynomial ring when $i$ is even and an exterior algebra when $i$ is odd. Moreover,  the canonical map
$$
K \ce \k[x_1, \dots, x_n]\langle y_1,\dots,y_n \delimit \partial(y_i)=x_i \rangle \to \k
$$
is a quasi-isomorphism. Indeed, if $i$ is even, then $K$ is the Koszul complex on the regular sequence, $x_1, \dots, x_n$; and if $i$ is odd, then~$K$ is the minimal free resolution of $\k$ over the exterior algebra~$\k[x_1, \dots, x_n]$. We therefore arrive at our desired surjective quasi-isomorphism:
$$
U(i+1) \xra{\simeq} \k[x_1, \dots, x_n] \langle y_1,\dots,y_n \delimit \partial(y_i)=x_i \rangle \otimes_{V(i)} V \xra{\simeq} \k \otimes_{V(i)} V.
$$
Finally, the last statement follows from the quasi-isomorphism $K(\m_S, V) \xra{\simeq} K(\m_{A_0}, A)$ and Proposition~\ref{deviationscompare}. 
\end{proof}

\begin{lemma}\label{poincarebound}
Let $M$ be a dg-$A$-module such that $H(M)$ is finitely generated as an $H_0(A)$-module. There is a polynomial $\ell(t) \in \Z[t,t^{-1}]$ with nonnegative coefficients such that there is a term-wise inequality $P_M(t) \le \ell(t)P_{\k}(t)$.
\end{lemma}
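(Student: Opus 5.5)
We would prove this by dévissage on the finitely generated $H_0(A)$-module $H(M)$, reducing to the case where $H(M)$ is a single copy of $\k$ (possibly shifted), in which case the bound is $t^i P_{\k}(t)$. The basic tool is subadditivity of Poincaré series along exact triangles. Suppose $M' \to M \to M'' \to$ is an exact triangle of dg-$A$-modules satisfying the hypotheses of Proposition~\ref{prop:sfres}. Then $\beta_i(M) = \dim_\k H_i(\k \otimes_A F_M)$, using minimality of the resolution $F_M$. The triangle becomes an exact triangle after applying $\k \otimes^{\LL}_A -$. The long exact sequence in homology then gives $\beta_i(M) \le \beta_i(M') + \beta_i(M'')$ for every $i$, so $P_M(t) \le P_{M'}(t) + P_{M''}(t)$ term-wise. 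We also have $P_{M[j]}(t) = t^{-j}P_M(t)$. Both facts should be verified first, the second being immediate from the definition of shift.

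Next we reduce to modules with homology concentrated in one degree. Since $H(M)$ is finitely generated over $H_0(A)$, it is bounded. We induct on the number of nonzero $H_i(M)$ using truncations. Let $j$ be the smallest index with $H_j(M) \ne 0$. Replace $M$ by its minimal semifree resolution, or by the model $N$ of Remark~\ref{rem:truncation}. The smart truncation $\tau_{\ge j+1}$ and the quotient then give an exact triangle $\tau_{\ge j+1}M \to M \to Q \to$ with $H(Q) \cong H_j(M)[j]$ concentrated in degree $j$. Both outer terms again have finitely generated homology, so by induction it suffices to handle a dg-module $M$ with $H(M) = H_j(M)$ concentrated in degree $j$ and finitely generated over $H_0(A)$. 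One must check that the smart truncation of a dg-$A$-module is again a dg-$A$-submodule. This holds because $A$ is nonnegatively graded, so $A_{\ge 0}$ preserves $\tau_{\ge j+1}$: the degree $j+1$ part is the cycles, and multiplication by $A_0$ preserves cycles while $A_{\ge 1}$ raises degree.

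For homology concentrated in one degree $j$, we induct on the length of the finite-length filtration of $H_j(M)$ by copies of $\k$. The main obstacle is that $H_j(M)$ need not have finite length. The minimal primes and higher-dimensional parts of $H_0(A)$-modules obstruct a direct filtration by $\k$. My first attempt would be to handle a general finitely generated $H_0(A)$-module $L$ (regarded as a dg-$A$-module via $A \to H_0(A)$ after replacing $A$ by a suitable model) through a prime filtration of $L$, with subquotients $H_0(A)/\mathfrak p$. Each of these is linked to $\k$ by Koszul complexes: choose a system of parameters $x_1, \dots, x_d$ of $H_0(A)/\mathfrak p$. The triangles $N \xra{x} N \to \cone(x) \to$ give $P_{\cone(x)} \le (1+t)P_N$, but we need the inequality in the other direction. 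Instead, we use that multiplication by $x \in \m_{A_0}$ induces zero on $\k \otimes^{\LL}_A N$, as $x$ acts by zero on $\k$. The long exact sequence for the Koszul triangle then splits, giving $P_{\cone(x)}(t) = (1+t)P_N(t)$, hence $P_N(t) \le P_{\cone(x)}(t)$ term-wise. Iterating over the parameters bounds $P_{H_0(A)/\mathfrak p}$ by the Poincaré series of a module whose homology has finite length. That module's homology is bounded and finite length in each degree, so the earlier steps bound its Poincaré series by a Laurent polynomial with nonnegative coefficients times $P_{\k}(t)$. Summing these bounds over the triangles yields the required $\ell(t)$.
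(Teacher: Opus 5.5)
Your proof is correct, and it takes a genuinely different route from the paper's at both reduction steps.

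\textbf{The paper's route.} It replaces $M$ by a bounded, degreewise finitely generated model (Remark~\ref{rem:truncation}) and inducts on its length using the brutal truncation $0\to\tau_{<n}(M)\to M\to M_n\to 0$. For a single finitely generated $H_0(A)$-module, it then uses the Artin--Rees-type input \cite[Lemma 4.1.6]{avramov}: the subcomplexes $C^s$ of the Koszul complex are exact for $s\gg 0$. This yields injectivity of $\Tor^A(\k,M)\to\Tor^A(\k,M/\m_A^sM)$, which reduces to the finite-length case.

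\textbf{Your route.} You filter by homological degree using smart truncations. You then replace the Artin--Rees step by the observation that any $x\in\m_{A_0}$ (automatically a central cycle) acts by zero on $\k\otimes^{\LL}_A-$. Hence $P_{\cone(x)}(t)=(1+t)P_N(t)$, and so $P_N\le P_{\cone(x)}$ term-wise.

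\textbf{Comparison.} Your argument is more elementary. Your truncation step is also the more robust one. The paper's $\tau_{<n}(M)$ is a subcomplex but is not stable under $A_{\ge 1}$, since $A_1M_{n-1}\subseteq M_n$. Your $\tau_{\ge j+1}M$, by contrast, is a genuine dg-$A$-submodule, as you checked. What the paper's route buys is a comparison with an honest finite-length quotient $M/\m_A^sM$, rather than with a complex whose homology has finite length.

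\textbf{Two points for your write-up.} First, the prime filtration is unnecessary. Applying your cone trick directly to $L=H_j(M)$, with $x_1,\dots,x_e$ generating $\m_{A_0}$, already gives a complex whose homology is bounded and annihilated by $\m_{A_0}$. Second, no change of model for $A$ is needed, because $A\to H_0(A)$ is already a morphism of dg-algebras. A dg-module with homology concentrated in degree $j$ is identified with $H_j(M)$ placed in degree $j$ (that is, $H_j(M)[-j]$ in the paper's shift convention) through the zigzag of quasi-isomorphisms of dg-$A$-modules $M\hookleftarrow\tau_{\ge j}M\onto H_j(M)[-j]$. You should state this explicitly when you realize the filtration of $H_j(M)$ by exact triangles.
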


\begin{proof}
By Remark~\ref{rem:truncation}, we may assume that each $M_i$ is finitely generated over $A_0$, and $M_i = 0$ for~$|i| \gg 0$. 
We first address the case where $M$ is concentrated in degree $0$ and has finite length as an $A$-module (or equivalently as an~$H_0(A)$-module). We argue by induction on the length $l_A(M)$. This is clear if $l_A(M) = 1$. If $l_A(M)>1$, then there is a short exact sequence
$$
0 \to \k \to M \to N \to 0,
$$
where $l_A(N)=l_A(M)-1$.  It follows from the Horseshoe Lemma that there is a term-wise inequality~$P^A_M(t) \le P^A_{\k}(t)+P^A_N(t)$, and we are done by induction. 

Suppose now that $M$ is any dg-$A$-module that is concentrated in a single degree. Without loss of generality, we may suppose this degree is $0$, as $P^A_{M[-\ell]}(t)=t^{\ell}P^A_M(t)$. 
Set $K^M\ce K(\m_{A_0},A_0) \otimes_{A_0} M$, and let $e$ denote the minimal number of generators of $\m_{A_0}$. By \cite[Lemma 4.1.6]{avramov}, there exists~$s \ge 0$ such that the dg-$K(\m_{A_0},A_0)$-submodule 
\[C_{A_0}^s \ce \left[0 \to \m_{A_0}^{s-e}K^M_e \to \cdots \to \m_{A_0}^{s-1}K^M_1 \to \m_{A_0}^sK_0^M \to 0\right]\]
is exact. Since $\m_{A_0}M=\m_AM$, we have:
\[C_{A_0}^s \cong C_A^s\ce \left[ 0 \to \m_A^{s-e}K(\m_A,M)_e \to \cdots \to \m_A^{s-1}K(\m_A,M)_1 \to \m_A^sK(\m_A,M)_0 \to 0\right].\]
In particular, the natural map $p \co K(\m_A,M) \to K(\m_A,M)/C_A^s$ 
is a quasi-isomorphism. 

Let $U$ be the minimal semifree resolution of $\k$ over $K(\m_A,A)$, and let $q:M \to M/\m_A^s M$ be the natural projection. The map $\id_U \otimes_{K(\m_A,A)} p$ agrees with the composition
\[U \otimes_{K(\m_A,A)} (K(\m_A,A) \otimes_A M) \xrightarrow{\pi} U \otimes_{K(\m_A,A)} (K(\m_A,A) \otimes_A M/\m_A^s M) \to U \otimes_{K(\m_A,A)} K(\m_A,M)/C_A^s,\]
which is a quasi-isomorphism since $U$ is semifree over $K(\m_A,A)$. In particular, $H(\pi)$ is injective. It follows that $\id_U \otimes q \co U \otimes_A M \to U \otimes_A M/\m_A^sM$ induces an injection on homology. 
Since $U$ is also a semifree resolution of $\k$ over $A$, we conclude that the natural map $\Tor_*^A(\k, M) \to \Tor_*^A(\k, M/\m_A^sM)$ is injective. In other words, there is a term-wise inequality $P_M^A(t) \le P^A_{M/\m^s_A M}(t)$, and so we are done by the finite length $A$-module case.

We now prove the general case. As above, without loss of generality, we may shift $M$ so that
\[M= \left[0 \to M_n \to \cdots \to M_1 \to M_0 \to 0\right],\]
and we argue by induction on $n$. We proved the $n=0$ case above. If $n>0$, then letting~$\tau_{<n}(M)$ denote the brutal truncation $0 \to M_{n-1} \to \cdots \to M_0 \to 0$ of $M$, we have a short exact sequence of dg-$A$-modules~$0 \to \tau_{<n}(M) \to M \to M_n \to 0$. Applying the Horseshoe Lemma again gives the inequality~$P^A_M(t) \le P^A_{\tau_{<n}(M)}(t)+P^A_{M_n}(t)$, and we are done by induction. 
\end{proof}

\begin{lemma}\label{largeevenderivations}
If the Betti numbers of $\kk$ over $A$ have polynomial growth, then $\epsilon^A_{2i}(\kk)=0$ for $i \gg 0$. 
\end{lemma}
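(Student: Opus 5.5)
We prove the contrapositive by a growth comparison. By Theorem~\ref{acyclicclosureofkminimal}, the acyclic closure $V = A\langle Y\rangle$ of $\k$ over $A$ is, as a dg-$A$-module, the minimal semifree resolution of $\k$. Hence $P_\k(t)$ is the Hilbert series of $\k\otimes_A V$. Since $V$ is a semifree $\Gamma$-extension of $A$, we have an isomorphism of graded $\k$-vector spaces
\[
\k \otimes_A V \cong \bigotimes_{j \ge 1} \k\langle Y_j\rangle,
\]
where $\k\langle Y_j\rangle$ is an exterior algebra on $\epsilon^A_j(\k)$ generators when $j$ is odd, and a divided power algebra on $\epsilon^A_j(\k)$ generators when $j$ is even. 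This gives the standard product formula
\[
P_\k(t)=\prod_{i\ge1}\frac{(1+t^{2i-1})^{\epsilon^A_{2i-1}(\k)}}{(1-t^{2i})^{\epsilon^A_{2i}(\k)}}.
\]
The divided power algebra has the same Hilbert series as the polynomial ring, so characteristic does not matter here. The deviations are finite by Notation~\ref{nota:dev}, so this expression makes sense coefficientwise.

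Now suppose $\epsilon^A_{2i}(\k)\ne 0$ for infinitely many $i$, and choose degrees $2i_1<2i_2<\cdots$ with each $\epsilon^A_{2i_k}(\k)\ge1$. Every factor of the product has nonnegative coefficients and constant term $1$. Therefore, for each $r$, there is a coefficientwise inequality
\[
P_\k(t)\ \ge\ \prod_{k=1}^r \frac{1}{1-t^{2i_k}}.
\]
The coefficient of $t^n$ on the right counts the representations $n=\sum_k a_k\cdot 2i_k$ with $a_k\ge 0$. Restricting to $n$ divisible by $N\ce\prod_k 2i_k$, this count grows like $c\,n^{r-1}$ for some $c>0$.

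Polynomial growth means $\beta_n(\k)\le C n^{d}$ for some fixed $d$ and all large $n$. Taking $r=d+2$ then gives a contradiction along the multiples of $N$. Hence only finitely many even deviations are nonzero, which is the statement $\epsilon^A_{2i}(\k)=0$ for $i\gg0$.

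The main point to check carefully is the first step: that $\k\otimes_A V$ has the stated Hilbert series, i.e.\ that the graded $A$-module underlying $A\langle Y\rangle$ is free on the standard monomials $\prod y^{(a)}$ with exterior variables in odd degrees. This is immediate from Construction~\ref{const:divided} and the iterative definition of a semifree extension; the only care needed is with infinitely many variables, handled by working in each degree, where only the finitely many variables of degree at most $n$ contribute. The asymptotic lower bound for the restricted partition function is elementary: for example, the solutions with $a_k\le n/(2i_k r)$ arbitrary for $k<r$ and $a_r$ forced, when $n$ is divisible by $N$, already number on the order of $n^{r-1}$.
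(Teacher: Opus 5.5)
Your proof is correct and is essentially the paper's argument: the product formula for $P_{\k}(t)$ obtained from the acyclic closure being the minimal resolution, followed by a coefficientwise lower bound by finitely many denominator factors that contradicts polynomial growth. One small fix in your elementary count: for arbitrary $a_1,\dots,a_{r-1}$ the ``forced'' $a_r$ need not be an integer. Restrict instead to $a_k$ that are multiples of $N/(2i_k)$ for every $k$, which amounts to the paper's comparison $P_{\k}(t)\ge (1-t^N)^{-r}$; the coefficient of $t^{mN}$ on the right is $\binom{m+r-1}{r-1}$, which grows like $m^{r-1}$ as you need.
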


The case of Lemma~\ref{largeevenderivations} where $A$ is a local ring was proven by Gulliksen in \cite[Proof of Theorem 2.3]{gulliksen2}. The proof for dg-algebras is exactly the same, but we include it for convenience.

\begin{proof}
Choose a polynomial $f$ with integer coefficients such that $\beta^A_i(\kk) \le f(i)$ for all $i$. We let~$d \ce \deg(f)+1$. By Theorem \ref{acyclicclosureofkminimal}, the acyclic closure $V$ of $\kk$ over $A$ is the minimal semifree resolution of $\kk$ over $A$. Thus, we have $\Tor^A(\k,\k) \cong H(V \otimes_A \k) \cong V \otimes_A \k$, and it follows that the Poincar\'{e} series of $\kk$ has the form
\[P^A_{\kk}(t)=\prod\limits^{\infty}_{i=1}\dfrac{(1+t^{2i-1})^{\epsilon^A_{2i-1}(\k)}}{(1-t^{2i})^{\epsilon^A_{2i}(\k)}}.\]
It suffices to show that there are at most $d$ factors in the denominator of this expression. To see this, suppose to the contrary that there are at least $d+1$ factors $1-t^{2a_i}$ for $1 \le i \le d+1$, and let~$N$ be the least common multiple of $2a_1,\dots,2a_{d+1}$. There is a coefficientwise inequality 
\[P^A_{\kk}(t)>\dfrac{1}{(1-t^N)^{d+1}}=\sum_{i \ge 0} {i+d \choose d}t^{iN}.\]
Thus, $\beta_{iN}(\kk) \ge {i+d \choose d}$ for all $i \ge 0$. But ${i+d \choose d}$ is a polynomial in $i$ of degree $d$, while $\deg(f)=d-1$, a contradiction. 
\end{proof}

Let $\wA$ be the $\m_{A_0}$-adic completion of $A$, and let $S$ be the minimal presentation of $\wA_0$. We say~$A$ is a \defi{derived complete intersection} if the minimal model of $\wA$ over $S$ is a finitely generated $S$-algebra. That is, the minimal model of $A$ is a polynomial dg-$S$-algebra, as defined in the introduction. The following is the main theorem of this section, and it implies Theorem \ref{thm:intromain}:

\begin{theorem}\label{derivedcithm}
Consider the following conditions:
\begin{enumerate}
\item[$(1)$] $A$ is a derived complete intersection.
\item[$(2)$] The acyclic closure of $\k$ over $A$ is a finitely generated dg-$A$-algebra.
\item[$(3)$] The Betti numbers of $\k$ over $A$ grow polynomially.
\item[$(4)$] If $M$ is a dg-$A$-module such that $H(M)$ is finitely generated as an $H_0(A)$-module, then the Betti numbers of $M$ grow polynomially.
\end{enumerate}
We have  $(1) \iff (2) \implies (3) \iff (4)$. If $A$ is derived nilpotent, then (1) - (4) are equivalent.
\end{theorem}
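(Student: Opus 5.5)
The plan is to prove the implications one at a time, using Proposition~\ref{quasi-isofibers} to translate between the minimal model over $S$ and the acyclic closure of $\k$. A preliminary reduction replaces $A$ by $\wA$: completion is flat over $A_0$, it preserves the residue field, the acyclic closure of $\k$, the Betti numbers, and derived nilpotence. So I assume $A_0$ is complete and $S \onto A_0$ is a minimal Cohen presentation.

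\emph{Step 1: $(1)\iff(2)$.} By Proposition~\ref{quasi-isofibers}, $n^S_i(A)=\epsilon^A_{i+1}(\k)$ for $i>1$, and $n^S_1(A)=\epsilon^A_2(\k)+n-m$. The minimal model $V$ is a finitely generated $S$-algebra exactly when $n^S_i(A)=0$ for $i\gg0$. The acyclic closure of $\k$ over $A$ is finitely generated exactly when $\epsilon^A_i(\k)=0$ for $i \gg 0$. Each count is finite in every degree by Corollary~\ref{cor:finite}, so the two vanishing conditions coincide.

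\emph{Step 2: $(2)\implies(3)$ and $(3)\iff(4)$.} By Theorem~\ref{acyclicclosureofkminimal}, the Poincar\'e series of $\k$ equals the product in the proof of Lemma~\ref{largeevenderivations}. If only finitely many $\epsilon_i$ are nonzero, this is a rational function with finitely many factors $(1-t^{2j})$ in the denominator, so its coefficients grow polynomially. Clearly $(4)\implies(3)$, since $H(\k)=\k$. For $(3)\implies(4)$, Lemma~\ref{poincarebound} gives $P_M(t)\le \ell(t)P_\k(t)$ with $\ell$ a Laurent polynomial. A finite sum of shifts of a polynomially bounded sequence is still polynomially bounded.

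\emph{Step 3: $(3)\implies(2)$ when $A$ is derived nilpotent.} Lemma~\ref{largeevenderivations} gives $\epsilon^A_{2i}(\k)=0$ for $i\gg0$, say for all even degrees $> N$. Suppose some odd $q>N$ has $\epsilon^A_q(\k)\neq0$. Then Theorem~\ref{odddeviationliftstoeven} produces an even $i>q>N$ with $\epsilon^A_i(\k)\ne0$, a contradiction. Hence all deviations vanish in degrees $>N+1$. Since each $\epsilon_i$ is finite, the acyclic closure is finitely generated.

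I expect the completion reduction to be the main obstacle, specifically checking that passing to $\wA$ preserves deviations and derived nilpotence. The cleanest route is to note that $A\to\wA$ is flat over $A_0$, so the acyclic closure of $\k$ over $A$ base changes to one over $\wA$, and a nilpotent model also base changes. The remaining steps are bookkeeping with the earlier results.
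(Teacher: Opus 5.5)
Your proposal is correct and follows the paper's proof step for step: Proposition~\ref{quasi-isofibers} gives $(1)\iff(2)$, the product formula for $P^A_{\k}(t)$ gives $(2)\implies(3)$, Lemma~\ref{poincarebound} gives $(3)\implies(4)$, and Lemma~\ref{largeevenderivations} combined with Theorem~\ref{odddeviationliftstoeven} gives $(3)\implies(2)$. The one difference is that you reduce to $A_0$ complete for the whole theorem and justify it by flatness of $A_0\to\widehat{A_0}$, whereas the paper makes this reduction only for $(1)\iff(2)$; since your Steps~2 and~3 never use completeness, your concern about completion preserving derived nilpotence and Betti numbers is unnecessary.
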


\begin{proof}
To prove the equivalence of (1) and (2), we may assume $A_0$ is complete. The equivalence of~(1) and (2) then follows from Proposition \ref{quasi-isofibers}, as $n_i^S(A)=0$ for $i \gg 0$ if and only if $\epsilon^A_i(\k)=0$ for $i \gg 0$. For $(2) \implies (3)$, we have, as in the proof of Lemma \ref{largeevenderivations}, that 
the Poincar\'{e} series of $\k$ is:
\[P^A_{\k}(t)=\prod\limits^{\infty}_{i=1}\dfrac{(1+t^{2i-1})^{\epsilon^A_{2i-1}(\k)}}{(1-t^{2i})^{\epsilon^A_{2i}(\k)}}.\]
Since $\epsilon^A_i(\kk)=0$ for $i \gg 0$,  the product in the formula above is finite. As the poles of this rational function are roots of unity, it follows that the Betti numbers of $\k$ have polynomial growth.
It is clear that (4) implies (3); let us show (3) implies (4). Let $M$ be as in (4). By Lemma~\ref{poincarebound}, there is a polynomial~$\ell(t) \in \Z[t,t^{-1}]$ with nonnegative coefficients for which $P^A_M(t) \le \ell(t)P_\k^A(t)$. It follows that if the Betti numbers of $\k$ grow polynomially, then so do the Betti numbers of $M$. Finally, we show that (3) implies (2) under the assumption that $A$ is derived nilpotent. From Lemma~\ref{largeevenderivations}, we have $\epsilon^A_{2i}(\kk)=0$ for $i \gg 0$. Thus, Theorem \ref{odddeviationliftstoeven} implies that $\epsilon^A_{2i+1}(\kk)=0$ for $i \gg 0$, and so the acyclic closure of $\kk$ over $A$ is finitely generated.
\end{proof}

\begin{example}
\label{ex:binary}
Let $\{a_i\}_{i \ge 0}$ be a sequence of positive even integers such that $a_{n+1} > \left(\sum_{i = 0}^n a_i\right) + n $ for all~$n \ge~0$. For instance, one may take $a_i = 2\cdot 3^i$. Let $A$ denote the local dg-algebra $\k[x_1, x_2, \dots]$, where $|x_i| = a_i$.  The dg-algebra $A$ is not a derived complete intersection, by the uniqueness of minimal models (Theorem~\ref{thm:minmodelexist}). However, we claim that $\beta_i(\k) \le 1$ for all $i$; in particular, the Betti numbers of $\k$ grow polynomially. This illustrates that the assumption that~$A$ is derived nilpotent in the implication (3)$\implies$(2) of Theorem~\ref{derivedcithm}  cannot be removed. To prove the claim, observe that the $\k$-algebra $\Tor^A_*(\k, \k)$ is isomorphic to $E \ce \bigwedge_\k(e_0, e_1, \dots)$, where $|e_i| = a_i + 1$. Since~$|e_{n+1}| > \sum_{i = 0}^n|e_i|$ for all $n \ge 0$, no two distinct exterior monomials in $E$ have the same degree, which means $\beta_i(\k) \le 1$ for all $i$. 
\end{example}

As an application of Proposition \ref{quasi-isofibers}, we obtain a differential graded analogue of the Auslander-Buchsbaum-Serre theorem. 

\begin{defn}
\label{defn:globaldim}
A dg-$A$-module $M$ is \defi{perfect} if it admits a semifree resolution that has finite rank over~$A$. 
\end{defn}

\begin{theorem}\label{auslanderbuchsbaumserre}
The residue field $\k$ of a local dg-algebra $A$ is perfect if and only if the minimal model of $\widehat{A}$ over $S$ has the form $S[x_1,\dots,x_n]$, where $|x_i|>0$ is even for all $i$. In particular, the differential of the minimal model of $\widehat{A}$ over $S$ vanishes.
\end{theorem}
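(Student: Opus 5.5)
Reduce to $A_0$ complete (perfection of $\k$ and the minimal model are insensitive to completion) and work with the acyclic closure $V=A\langle Y\rangle$ of $\k$ over $A$. By Theorem~\ref{acyclicclosureofkminimal}, $V$ is the minimal semifree resolution of $\k$. By the uniqueness in Proposition~\ref{prop:sfres}, $\k$ is perfect if and only if $V$ has finite rank over $A$.

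Since $V\otimes_A\k$ is the divided-power algebra on $Y$, finite rank holds if and only if $Y$ is finite and contains only odd-degree variables. In terms of deviations: $\epsilon^A_i(\k)<\infty$ for all $i$, $\epsilon^A_i(\k)=0$ for $i$ even, and $\epsilon^A_i(\k)=0$ for $i\gg0$. Equivalently, the Poincaré series $P^A_\k(t)$ computed in the proof of Lemma~\ref{largeevenderivations} is a polynomial.

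Now transfer this to the minimal model $V'$ of $A$ over $S$ using Proposition~\ref{quasi-isofibers}. For $i\ge 1$ it gives $n^S_i(A)=\epsilon^A_{i+1}(\k)$, up to the correction $n-m$ at $i=1$. Since $S\onto A_0$ is a minimal Cohen presentation, $\m_S\to\m_{A_0}/\m_{A_0}^2$ is an isomorphism; I will check that this forces $m=\on{edim}A_0$ and $n=\on{edim} H_0(A)$ to satisfy $n^S_1(A)=\epsilon^A_2(\k)+n-m$ consistently, i.e.\ the formula holds with no further correction. The outcome is that the even deviations vanish exactly when $V'$ has no odd-degree variables, and finiteness of $Y$ matches finiteness of the variable set of $V'$.

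The forward direction follows. If $\k$ is perfect, the variables of $V'$ lie in even positive degrees $i$ with $\epsilon^A_{i+1}(\k)\neq 0$, and there are finitely many of them, so $V'=S[x_1,\dots,x_n]$ as an algebra. Here there is a subtle point about degree $1$ variables: I must ensure $n^S_1(A)=0$. This should follow since $\epsilon_2^A(\k)=0$ and $n\le m$ with $n^S_1\ge0$; the precise bookkeeping of $n-m$ is where I expect trouble.

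The differential vanishes for degree reasons. $V'$ has no odd-degree elements at all, since $S$ sits in degree $0$ and all generators are even. The differential lowers degree by one, so it maps even-degree elements to odd-degree elements, which are zero. Hence $\del=0$.

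Conversely, suppose $V'=S[x_1,\dots,x_n]$ with each $|x_i|$ even. By Proposition~\ref{quasi-isofibers}, $\epsilon^A_{j}(\k)=0$ for $j$ even, and $Y$ is finite. Therefore $V$ is a finite exterior extension of $A$, which is finite rank, so $\k$ is perfect. Alternatively, one can verify this directly: $\k$ over $S[x]$ with $\del=0$ is resolved by the finite Koszul complex on $\m_S$ and the $x_i$, which is a regular sequence.

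The main obstacle is making sure Proposition~\ref{quasi-isofibers} applies with the right indexing. That means handling the degree-$1$ shift $n-m$, and checking that finiteness of $V$'s rank really corresponds to finitely many variables. The latter uses Corollary~\ref{cor:finite} for finiteness in each degree.
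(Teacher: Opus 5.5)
Your outline is the paper's own argument. Perfection of $\k$ forces the acyclic closure (the minimal resolution of $\k$, by Theorem~\ref{acyclicclosureofkminimal}) to have finitely many variables, all odd. Proposition~\ref{quasi-isofibers} transports this to the minimal model over $S$, parity kills the differential, and the converse is the finite Koszul resolution over $S[x_1,\dots,x_n]$. (Minor point: perfection makes the \emph{minimal} resolution finite not because of uniqueness, but because $\beta_i(\k)=\dim_\k\Tor^A_i(\k,\k)$, which any semifree resolution $G$ computes as $H(G\otimes_A\k)$.)

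The real problem is the step you flagged: deducing $n^S_1(A)=0$ from $\epsilon^A_2(\k)=0$. The degree-one formula printed in Proposition~\ref{quasi-isofibers} has the wrong sign. Its proof identifies $n^S_1(A)$ with $\epsilon^{K(\m_{A_0},A)}_2(\k)$ via $K(\m_S,V)\simeq K(\m_{A_0},A)$, and Proposition~\ref{deviationscompare} evaluates this as $\epsilon^A_2(\k)+m-n$. So $\epsilon^A_2(\k)=0$ only gives $n^S_1(A)=m-n\ge 0$, which is positive precisely when $\partial(A_1)\not\subseteq\m_{A_0}^2$. The paper's proof uses the same step (a degree-one odd $x_i$ is asserted to force an even deviation), so it shares the gap.

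The gap cannot be closed, because the statement fails in that case. Take $A=\k\llbracket t\rrbracket[e\delimit\partial(e)=t]$, the Koszul complex on $t$; here $\widehat{A}=A$ and $S=\k\llbracket t\rrbracket$. The augmentation $A\to\k$ is a quasi-isomorphism, so $A$ itself is a rank-one semifree resolution of $\k$, and every deviation vanishes. Yet the minimal model of $A$ over $S$ is $A$ itself (minimal since $t\in\m_S$), with the odd variable $e$ and nonzero differential.

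Your argument is correct under the extra hypothesis $\on{edim}A_0=\on{edim}H_0(A)$, i.e.\ $\partial(A_1)\subseteq\m_{A_0}^2$, where $n^S_1(A)=\epsilon^A_2(\k)$. Without it, the argument shows only that the odd-degree variables of the minimal model are exactly $m-n$ variables of degree one. With the corrected formula, your converse via Proposition~\ref{quasi-isofibers} is fine, since $n^S_1(A)=0$ forces both $\epsilon^A_2(\k)=0$ and $m=n$.
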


\begin{proof}
The ``if" direction is immediate. Suppose $\k$ is perfect. We may assume~$A_0$ is complete. Theorem \ref{derivedcithm} implies that $A$ is a derived complete intersection, and so the minimal model of $A$ over~$S$ has the form $S[x_1,\dots,x_n]$ for some $x_i$ with $|x_i|>0$. It suffices to show that each~$x_i$ has even degree. If some $x_i$ has odd degree, then it follows from Proposition \ref{quasi-isofibers} that the acyclic closure of~$\k$ over $A$ contains a variable of even degree, and so it is an unbounded dg-$A$-module. Since the acyclic closure of $\k$ is the minimal semifree resolution of $\k$ (Theorem \ref{acyclicclosureofkminimal}), this contradicts our assumption that $\k$ is perfect.
\end{proof}

\section{Extending Gulliksen's Theorem}
\label{sec:deviatons}

The goal of this section is to prove a result on vanishing of deviations of local dg-algebras (Theorem~\ref{deviationsboundthm}) and explain how it recovers Gulliksen's Theorem (Theorem \ref{gulliksen}). Along the way, we explain why Gulliksen's original approach does not adapt to dg-algebras: see Remark~\ref{gulliksenremark}. 

Work of Halperin \cite[Theorem B]{halperin87} shows that, given a local ring $R$ with residue field $\k$, if~$\epsilon^R_i(\k)=0$ for some~$i$, then $R$ is a complete intersection ring, so that $\epsilon^R_i(\k)=0$ for all $i>2$. The following simple example shows that this behavior does not naively extend to dg-algebras, as one may have arbitrarily long gaps between nonvanishing deviations. This observation serves as a key motivation for our work in this section.
We once again work with the notation of Setup~\ref{setup}. 

\begin{example}\label{deviationgapexample}
Let $m \ge 2$, and set $A=\k[x_0,x_1 \delimit \partial(x_1)=x_0^m]$, where $x_0$ has even degree $d$. That is, $A$ is the Koszul complex on $x_0^m \in \k[x_0]$. As $A$ is a minimal dg-$\k$-algebra, it follows that $A$ is the minimal model for $B:=\k[x_0]/(x_0^m)$ over $\k$.
In particular, even though $B$ is an ordinary (nonstandard) graded~$\k$-algebra, we have $\epsilon^B_{d+1}(\k)=n_d^{\k}(B)=\epsilon^B_{md+2}(\k)=n^{\k}_{md+1}(B)=1$ (Proposition \ref{quasi-isofibers}), while $\epsilon^B_{i+1}(\k)=n_i^{\k}(B)=0$ for any $i \ne d,md+1$.
\end{example}

The following theorem gives the strongest possible adaptation of Gulliksen's argument to the setting of derived nilpotent dg-algebras. As Remark \ref{gulliksenremark} notes, it allows for little control over vanishing of deviations, which will require the more subtle analysis we carry out through the remainder of this section:

\begin{theorem}\label{derivednilpotentcis}
Suppose $A$ is derived nilpotent, and assume there is a surjective ring homomorphism~$S \onto A_0$, where $S$ is a regular local ring. Let $V$ be the minimal model of $A$ over $S$. If $A$ is a derived complete intersection, then $V$ is perfect as a dg-$V(i)$-module for all $i \ge 0$.
\end{theorem}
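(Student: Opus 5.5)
Since $A$ is a derived complete intersection, the minimal model $V = S[X]$ of $A$ over $S$ has only finitely many variables, all in positive degree. (If $A_0$ is not complete, first pass to completions.) Fix $i \ge 0$. Then $V$ is obtained from $V(i)$ by adjoining the finitely many variables of $X_{\ge i+1}$. The plan is to show that $V$ is perfect over $V(i)$ by building it up one variable at a time. Adjoining an exterior variable preserves perfection, but adjoining an even-degree polynomial variable does not; for such variables I will use derived nilpotence to show that no unbounded resolution is needed.

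Order the variables of $X_{\ge i+1}$ compatibly with the construction, so that $V = W_N$ with $W_0 = V(i)$ and $W_j = W_{j-1}[x_j]$. It suffices to show that each $W_j$ is perfect over $W_{j-1}$, since perfection is transitive along a tower of finite-rank semifree extensions.

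\textbf{Odd $|x_j|$.} Here $W_j = W_{j-1}\oplus W_{j-1}x_j$ is a finite-rank semifree extension, so the short exact sequence $0\to W_{j-1}\to W_j\to W_{j-1}[-|x_j|]\to 0$ gives perfection directly.

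\textbf{Even $|x_j|$, with $\del(x_j) = z$ an odd cycle.} The idea is to exploit nilpotence. Since $A$ is derived nilpotent and $V \simeq A$, the class of $z$ in $H(V)$ is nilpotent. I would try to mimic Lemma~\ref{nilpotenthomology}: replace the base by something nilpotent and show that $W_j$ is built from $W_{j-1}$ in finitely many cones, using the short exact sequence $0 \to W_{j-1}\to W_j \to W_j[-|x_j|]\to 0$ together with nilpotence to truncate. This step is delicate.

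\textbf{Alternative route.} A cleaner approach for the even case is to argue on homology instead. Show that $\k\otimes^{\LL}_{V(i)} V$ has finite total $\k$-dimension; since $V$ is a semifree $V(i)$-module of finite type, this is equivalent to perfection, by minimal semifree resolutions (Proposition~\ref{prop:sfres}) and Nakayama. The commutative diagram of Proposition~\ref{quasi-isofibers} gives $\k\otimes_{V(i)}V\simeq U(i+1)$, a truncation of the acyclic closure of $\k$ over $K(\m_S,V)\simeq K(\m_{A_0},A)$. By Proposition~\ref{quasi-isofibers} and the finiteness of $X$, the acyclic closure of $\k$ over $A$ has finitely many variables. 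Then $\k\otimes_{V(i)} V \cong \k[X_{\ge i+1}]$ with its induced differential, a finitely generated dg-$\k$-algebra. Since $A$ is derived nilpotent, so is $K(\m_{A_0},A)$ by Lemma~\ref{koszulnilpotent}.

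I would then show that $H(\k\otimes_{V(i)}V)$ is bounded by descending through the variables of $X_{\ge i+1}$. Start from $\k\otimes_{V(N-1)}V$, which is either $\k[x_N]$ or $\k\otimes\bigwedge(x_N)$. At each step, add back one variable and apply the argument of Lemma~\ref{nilpotenthomology}(2). Here boundedness of homology must pass in the reverse direction, from the smaller algebra to the larger one, so I would need a variant of that lemma. The \textbf{main obstacle} is exactly this: controlling the even (polynomial) variables, whose tensor factor $\k[x]$ has unbounded homology unless a differential kills it. I expect the key input to be the nilpotence of the cycles $\del(x)$ from Lemma~\ref{nilpotenthomology}(1), which forces the connecting maps to be nilpotent and hence yields eventual vanishing of homology. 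Once $H(\k\otimes^{\LL}_{V(i)}V)$ is bounded and finite, the minimal semifree resolution of $V$ over $V(i)$ has finite rank, which proves perfection.
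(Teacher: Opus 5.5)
The first route cannot work. Its reduction to ``each $W_j$ is perfect over $W_{j-1}$'' is false as soon as an even-degree variable occurs above degree $i$: then $\k\otimes_{W_{j-1}}W_j=\k[x_j]$ has zero differential and unbounded homology. In Example~\ref{deviationgapexample}, $\k[x_0]$ is not perfect over $\k$, although $V=\k[x_0,x_1]$ is. The nilpotence you invoke there is also vacuous: $z=\del(x_j)$ is a boundary in $V$, so its class in $H(V)$ is zero. Perfection of $V$ over $V(i)$ cannot be checked variable by variable on the $V$ side.

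Your alternative route has the right reduction (boundedness of $H(\k\otimes_{V(i)}V)$), the right identification $\k\otimes_{V(i)}V\simeq U(i+1)$, and the right ingredients. But the crux is left as your acknowledged ``main obstacle,'' and the plan you sketch for it is misdirected.

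\emph{The base case is wrong.} You start from $\k\otimes_{V(N-1)}V=\k[X_N]$, which is bounded only if the top-degree variables are odd. That is a consequence of the theorem, not an input. The correct starting point is $\k\otimes_{V(N)}V=\k$, i.e.\ the full acyclic closure $U$, which is finitely generated and quasi-isomorphic to $\k$.

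\emph{No ``reverse'' variant of Lemma~\ref{nilpotenthomology}(2) is needed.} Under Proposition~\ref{quasi-isofibers}, passing from $\k\otimes_{V(j)}V$ to $\k\otimes_{V(j-1)}V$ is passing from $U(j+1)$ down to $U(j)$. That is exactly the direction in which the lemma, as stated, transfers boundedness. So run the induction on the acyclic-closure side, as the paper does:
\begin{enumerate}
\item Replace $A$ by a nilpotent model, so that $K(\m_{A_0},A)$ is nilpotent by Lemma~\ref{koszulnilpotent}.
\item Starting from $U\simeq\k$, remove the variables of $U$ one at a time.
\item Apply Lemma~\ref{nilpotenthomology}(2) at each step. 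Its hypotheses hold because $H_0=\k$ throughout, and each $\del(y)$ lies in $\m U$ by minimality of the acyclic closure (Theorem~\ref{acyclicclosureofkminimal}).
\end{enumerate}

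\emph{The nilpotent element is misidentified.} Translated back to the $V$ side, what must be nilpotent is the class of the even variable $x\in X_j$ itself in $H(\k\otimes_{V(j-1)}V)$, not the cycle $\del_V(x)$. That class corresponds to the cycle $\del(y)$ in $U(j)$, which is nilpotent by Lemma~\ref{nilpotenthomology}(1).
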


\begin{proof}
Without loss of generality, we may assume $A$ is nilpotent.  Since $V$ is semifree as a $V(i)$-module, it suffices to show that $\k  \otimes_{V(i)} V$ has bounded homology for all $i \ge 0$. Fix $j \ge 1$, and let $U$ be the acyclic closure of $\k$ over $A$. Since $A$ is a derived complete intersection, it follows from Theorem~\ref{derivedcithm} that~$U=U(\ell)$ for some $\ell$.
We may thus apply Lemma \ref{nilpotenthomology}(2), and induction, to conclude that~$U(j)$ has bounded homology. Finally, by Proposition \ref{quasi-isofibers}, we have a~quasi-isomorphism~$U(j) \simeq \k \otimes_{V(j-1)} V$. 
\end{proof}

\begin{remark}\label{gulliksenremark}
The arguments of Proposition \ref{quasi-isofibers} and Theorem \ref{derivednilpotentcis} can be extended to cover not only the intermediate subalgebras $U(i)$ and $V(i)$, but also those given by adjoining each successive variable in the constructions of $U$ and $V$. This is a key point in the proof of the main theorem of~\cite{gulliksen}, which is invoked in the proof of Theorem~\ref{gulliksen}. In more detail: if $A$ is a local ring such that the Betti numbers of $\k$ grow polynomially, then $V$ must be perfect over $K(I',S)$, where~$I'$ is generated by a maximal regular sequence of minimal generators for the defining ideal~$I$ of $A$ over~$S$. By a result of Auslander-Buchsbaum~\cite[Proposition 6.2]{AB58}, this can only occur when $I'=I$, so that $A$ is a complete intersection. However, no such obstructions exist for dg-algebras in general. Indeed, since the conclusion of Theorem~\ref{derivednilpotentcis} holds for any derived complete intersection with bounded homology, this theorem cannot directly provide control over the vanishing of deviations of such dg-algebras~(see e.g. Example~\ref{deviationgapexample}). 
\end{remark}

\begin{prop}\label{switchingdegree}
Assume there is a surjective ring homomorphism~$S \onto A_0$, where $S$ is a regular local ring. Let $V\ce S[X]\langle Y \rangle$ be the minimal model for $A$ over $S$ with switching degree $s$. Set~$r \ce s$ if $s$ is even and $ r\ce s+1$ if $s$ is odd, and write $e_i\ce \epsilon^{K(\m,A)}_i(\k)$. We have $n_i(V) = e_{i+1}$ for all~$1 \le i < 2r$, and $n_{2r}(V) \le e_{2r+1}$.
\end{prop}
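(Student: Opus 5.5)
The plan is to rerun the inductive comparison from the proof of Proposition~\ref{quasi-isofibers}, now with divided power variables in $V$ in degrees $\ge s$, and to track the first degree at which the comparison can fail. Throughout, $n_i(V)$ is read as the number of degree-$i$ variables of $V$, so it equals $|X_i|$ for $i<s$ and $|Y_i|$ for $i\ge s$. The reduction is the same as before. Let $U$ be the acyclic closure of $\k$ over $K(\m_S,V)$. Since $K(\m_S,V)\xra{\simeq}K(\m_{A_0},A)$, and acyclic closures are unique (Corollary~\ref{minmodelscor}), $U$ has $e_{j}$ new variables in each degree $j\ge 2$. So it suffices to compare $\dim_\k H_i(U(i))=e_{i+1}$ with the number of degree-$i$ variables of $V$.

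\textbf{Step 1 (the inductive construction).} As in Proposition~\ref{quasi-isofibers}, I will build surjective maps $U(i)\to \k\otimes_{V(i-1)}V$ compatible with the inclusions and projections. At stage $i$, the degree-$i$ variables $x_1,\dots,x_t$ of $V$ span $H_i(\k\otimes_{V(i-1)}V)$, because that complex is $\k$ with free variables in degrees $\ge i$ and trivial differential in the lowest degree. We also have
$$
\k\otimes_{V(i-1)}V\cong \k[x_1,\dots,x_t]\otimes_{V(i)}V \quad\text{or}\quad \k\langle x_1,\dots,x_t\rangle\otimes_{V(i)}V,
$$
according as $i<s$ or $i\ge s$. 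Adjoining $y_j$ with $\partial(y_j)=x_j$ on both sides, the map is a quasi-isomorphism at the next stage exactly when $K_i\ce \k[x]\langle y\rangle$ (resp.\ $\k\langle x\rangle\langle y\rangle$) resolves $\k$. This holds when $i<s$, and also when $i\ge s$ is odd, since then $\k\langle x\rangle$ is an exterior algebra and the divided power Koszul complex resolves $\k$. It also holds in characteristic $0$ for every $i$, since there $\k\langle x\rangle=\k[x]$.

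\textbf{Step 2 (where the comparison fails).} The only possible failure is at an even $i\ge s$, i.e.\ $i\ge r$, with $\on{char}\k=p>0$. A direct computation in $\k\langle x\rangle\langle y\rangle$, using $\partial(x^{(n)}y)=(n+1)x^{(n+1)}$, shows that its homology is $\k$ in degrees $<pi$. The first extra class is $x^{(p)}$, in degree $pi\ge 2i\ge 2r$, followed by $x^{(p-1)}y$ in degree $pi+1$. Consequently, the induction of Step 1 yields genuine quasi-isomorphisms $U(j)\xra{\simeq}\k\otimes_{V(j-1)}V$ in all homological degrees $<2r$ for every $j<2r$. Reading off lowest homology then gives $n_i(V)=\dim H_i(U(i))=e_{i+1}$ for $1\le i<2r$.

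\textbf{Step 3 (the inequality at $2r$).} At stage $2r$, the map $U(2r)\to \k\otimes_{V(2r-1)}V$ is still an isomorphism on $H_j$ for $j<2r$. The extra classes can only enlarge the target in degree $2r$, namely by the classes $x^{(2)}$ with $p=2$ and $|x|=r$. Meanwhile, the degree-$2r$ variables of $V$ remain linearly independent classes in $H_{2r}$ of the target, and they lift to cycles of $U(2r)$ by the surjectivity of the map and a standard lifting argument on cycles in the bottom degree. This should give $\dim H_{2r}(U(2r))\ge n_{2r}(V)$, i.e.\ $n_{2r}(V)\le e_{2r+1}$.

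I expect Step 3 to be the main obstacle. Once the target has extra homology, it is unclear whether $H_{2r}$ of the map is surjective, and the lifted cycles must be shown to remain independent in $H_{2r}(U(2r))$. My first attempt is to compare the long exact sequences of cones, as in Lemma~\ref{lem:mingens}, which should make the map injective on the span of the variable classes. Step 2 is also delicate: one must check that the error terms introduced at even stages $i\ge r$ only affect degrees $\ge pi$, so that they never contaminate the degrees $<2r$ used in the equality.
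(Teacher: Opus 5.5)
\textbf{Verdict: there is a genuine gap in Step~3, and the mechanism you give for it points the wrong way.}

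\textbf{Where the extra homology lives.} The extra homology from Step~2 (for instance the class of $x^{(2)}$ when $\on{char}\k=2$ and $|x|=r$) lives in $K_r$. That is, it lies on the \emph{source} side $U(r+1)\simeq K_r\otimes_{V(r)}V$, not in the target. In $\k\otimes_{V(2r-1)}V$, every variable of degree $<2r$ and all of its divided powers are sent to zero. Hence $H_{2r}(\k\otimes_{V(2r-1)}V)$ is exactly the span of the degree-$2r$ variables of $V$, and it is never enlarged.

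\textbf{Why Step~3 is circular.} What you actually need is surjectivity of $H_{2r}(\pi)$, where $\pi\colon U(2r)\to\k\otimes_{V(2r-1)}V$. Your ``standard lifting argument'' cannot supply it. Lifting a bottom-degree cycle through the surjection $\pi$ is obstructed by $H_{2r-1}(\ker\pi)$. By the long exact sequence, and given injectivity on $H_{2r-1}$, this group is precisely $\on{coker} H_{2r}(\pi)$. So you are assuming the very surjectivity you say is unclear. Your fallback via Lemma~\ref{lem:mingens} (injectivity on the variable classes) does not address surjectivity.

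\textbf{The missing idea.} The failure at an even stage $i\ge r$ is only a failure of injectivity. Since $K_i\to\k$ is an isomorphism on $H_0$, your own computation shows that its cone has homology concentrated in degrees $\ge pi+1$. This connectivity is preserved in three ways:
\begin{enumerate}
\item by $-\otimes_{V(i)}V$, since $V$ is semifree over $V(i)$;
\item by adjoining the same variables on both sides (filter by divided-power degree in the new variables);
\item by composition.
\end{enumerate}
Hence every comparison map $U(j+1)\to\k\otimes_{V(j)}V$ has cone with homology only in degrees $>pr\ge 2r$. This has two consequences:
\begin{itemize}
\item The map is an isomorphism on $H_{<2r}$. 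This keeps your chosen cycles a basis at each stage, so the construction matches the acyclic closure and gives $n_i(V)=e_{i+1}$ for $i<2r$.
\item The map is surjective on $H_{2r}$. This gives $e_{2r+1}=\dim_\k H_{2r}(U(2r))\ge n_{2r}(V)$.
\end{itemize}
With this bookkeeping your route works, and it even gives equality at $2r$ when $\on{char}\k\ne 2$.

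\textbf{The paper's route.} The paper avoids this analysis altogether. Let $W$ be the minimal model of switching degree $2r+1$. The paper builds an algebra map $W(t-1)\to V(t-1)$ sending polynomial and exterior variables to the corresponding divided-power variables. This map is an isomorphism in degrees $<2r$, because odd divided-power variables are exterior and even ones of degree $\ge r$ have $y^{(2)}$ in degree $\ge 2r$. So the induced map $\cone(W(t-1)\to A)\to\cone(V(t-1)\to A)$ is an isomorphism in degrees $\le 2r$. That gives equal variable counts below $2r$ and a surjection on $H_{2r}$ of the cones. Proposition~\ref{quasi-isofibers} then applies directly to $W$.
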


\begin{proof}
It is immediate that $n_i(V)=n^S_i(A)$ when $i<s$. It thus follows from Proposition~\ref{quasi-isofibers} that~$n_i(V)=e_{i+1}$ for all $1 \le i<s$. Let $U$ be the minimal model for $A$ over $S$ with switching degree $2r+1$. We argue by induction on $s \le t \le 2r$ that $U$ and $V$ have the same number of variables in degrees at most $t-1$, and that there is a morphism of dg-algebras $\theta_t:U(t-1) \to V(t-1)$ inducing an isomorphism in degrees $<2r$ and fitting into a commutative diagram of the form
\[\begin{tikzcd}
	{U(t-1)} & {V(t-1)} \\
	A
	\arrow["{\theta_t}", from=1-1, to=1-2]
	\arrow["{a_t}"', from=1-1, to=2-1]
	\arrow["{b_t}", from=1-2, to=2-1]
\end{tikzcd}\]
where $a_t$ and $b_t$ are the natural maps. This is clear when $t = s$, since $U(s-1) = V(s-1)$. Suppose the claim holds for some $s \le t < 2r$. The map $c_t \ce\begin{pmatrix} \theta_t & 0 \\ 0 & 1_A \end{pmatrix}:\cone(a_t) \to \cone(b_t)$ is an isomorphism in degrees $\le 2r$. In particular, the induced map $H_t(\cone(a_t)) \to H_t(\cone(b_t))$ is an isomorphism. Let~$x_1', \dots, x_m'$ be the variables of degree $t$ in $U(t)$, and write $\begin{pmatrix} \alpha_i \\ \beta_i \end{pmatrix} \ce c_t\begin{pmatrix} \partial_{U(t)}(x'_i) \\ a_{t+1}(x'_i) \end{pmatrix}$. By the construction in the proof of Theorem \ref{thm:minmodelexist}, we may assume $V(t)=V(t-1)\langle y_1,\dots,y_m \delimit \partial(y_i)=\alpha_i\rangle$, and $b_{t+1}(y_i)=\beta_i$. Since $x_1', \dots, x_m'$ are either symmetric or exterior variables, we may extend $\theta_t$ to a map $\theta_{t+1}:U(t) \to V(t)$ by sending $x'_i$ to $y_i$. Since $V(t)_{<2 r} \subseteq V(t-1) \oplus \bigoplus^m_{i=1} V(t-1) \cdot y_i$, it follows that $\theta_{t+1}$ is an isomorphism in degrees $<2r$. Thus, $\theta_{t+1}$ has the desired properties. 

Finally, since $c_{2r}:\cone(a_{2r}) \to \cone(b_{2r})$ is an isomorphism in degrees $\le 2r$, 
the induced map~$H_{2r}(\cone(a_{2r})) \to H_{2r}(\cone(b_{2r}))$ is surjective. It follows from the construction in the proof of~Theorem~\ref{thm:minmodelexist} that the number of variables in $V$ of degree $2r$ is the minimal number of generators of~$H_{2r}(\cone(b_{2r}))$ as an $S$-module, and similarly for $U$. Thus, we have $n_{2r}(V) \le n_{2r}(U)$, and~Proposition \ref{quasi-isofibers} implies $n_{2r}(U) = e_{2r+1}$. 
\end{proof}

\begin{defn}
Given a dg-$A$-module $M$, a \defi{degree $e$ derivation} $d \co A \to M$ is a $\Z$-linear chain map of degree $e$ satisfying~$d(aa') = (-1)^{|a'|e}d(a)a' + ad(a')$ for homogeneous $a, a' \in A$. 
\end{defn}

The following lemma is well-known to experts, but we could not find a reference covering the necessary level of generality, so we provide a proof.

\begin{lemma}\label{derivationlift}
Let $z \in \m_A$ be a cycle, set $B \ce A\langle y \delimit \partial(y)=z \rangle$, and let $M$ be a dg-$B$-module. A degree $e$ derivation $d \co A \to M$ lifts to a degree $e$ derivation $\widetilde{d} \co B \to M$ if and only if $d(z)$ lies in the image of the differential of $M$.  
\end{lemma}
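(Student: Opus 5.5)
The plan is to treat the two directions separately and to split into cases according to the parity of $|y| = |z|+1$, following Constructions~\ref{const:poly} and~\ref{const:divided}.

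\emph{Necessity.} Suppose $\widetilde{d}$ is a degree $e$ derivation lifting $d$. Since $\widetilde{d}$ is a chain map of degree $e$, it satisfies $\del_M \widetilde{d} = (-1)^e \widetilde{d} \del_B$. Evaluating at $y$ gives
\[
d(z) = \widetilde{d}(\del_B(y)) = (-1)^e \del_M(\widetilde{d}(y)),
\]
so $d(z)$ lies in the image of $\del_M$.

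\emph{Sufficiency.} Suppose $d(z) = \del_M(m)$ for some $m \in M_{|y|+e}$. Set $\widetilde{d}(y) \ce (-1)^e m$. We must then define $\widetilde{d}$ on the $A$-basis of $B$ and check that the result is a derivation and a chain map.

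\begin{itemize}
\item If $|y|$ is odd, then $B = A \oplus Ay$. Define $\widetilde{d}(a + a'y) \ce d(a) + d(a')y \pm a'\widetilde{d}(y)$, with signs dictated by the Leibniz rule. The derivation property then reduces to $y^2 = 0$, and we need $\widetilde{d}(y)y + (-1)^{|y|(\cdot)} y\widetilde{d}(y) = 0$. This holds by graded commutativity of the module action (Remarks~\ref{remarks}(2)) together with the parity of $|y|$.
\item If $|y|$ is even, then $B = \bigoplus_i A y^{(i)}$. Define $\widetilde{d}(y^{(i)}) \ce \widetilde{d}(y)\, y^{(i-1)}$ (no sign is needed since $|y|$ is even) and extend via the Leibniz rule:
\[
\widetilde{d}(a y^{(i)}) \ce d(a) y^{(i)} + (-1)^{\cdots} a\, \widetilde{d}(y) y^{(i-1)}.
\]
The derivation check comes down to the identity $y^{(i)}y^{(j)} = \binom{i+j}{i} y^{(i+j)}$, combined with
\[
\widetilde{d}(y)y^{(i-1)}y^{(j)} + y^{(i)}\widetilde{d}(y)y^{(j-1)} = \left(\tbinom{i+j-1}{j} + \tbinom{i+j-1}{i}\right)\widetilde{d}(y)y^{(i+j-1)}.
\]
By Pascal's rule the right-hand side equals $\binom{i+j}{i}\widetilde{d}(y)y^{(i+j-1)}$, so the divided powers are compatible with $\widetilde{d}$. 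This is the key point ensuring the construction works in arbitrary characteristic.
\end{itemize}

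Once $\widetilde{d}$ is known to be a derivation of degree $e$, the chain map condition needs to be checked only on algebra generators. The reason is that the commutator expression $\del_M\widetilde{d} - (-1)^e\widetilde{d}\del_B$ is itself a derivation of degree $e-1$, since $\del_M$ satisfies the Leibniz rule for the $B$-module structure. It therefore vanishes on all of $B$ once it vanishes on $A$ and on $y$:
\begin{itemize}
\item On $A$ it vanishes because $d$ is a chain map.
\item On $y$ it vanishes by the choice $\widetilde{d}(y) = (-1)^e m$, since $\del_M(\widetilde{d}(y)) = (-1)^e d(z) = (-1)^e \widetilde{d}(\del_B y)$.
\end{itemize}
In the divided power case, "generators" must include all the $y^{(i)}$, so I will verify the condition on $y^{(i)}$ directly, using $\del_B(y^{(i)}) = z y^{(i-1)}$ and induction on $i$.

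The main obstacle is sign bookkeeping: first fixing the correct sign in the definition of $\widetilde{d}(ay^{(i)})$, and then checking that the commutator of $\del_M$ and $\widetilde{d}$ is again a (twisted) derivation with the right sign conventions. I would settle the sign conventions first on the exterior case and then transport them to the divided power case.
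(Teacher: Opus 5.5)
Your proposal is correct and follows the paper's proof: the paper uses the same evaluation at $y$ for necessity and the same explicit lifts in the exterior and divided-power cases, with $\widetilde{d}(y)$ a (signed) preimage of $d(z)$; your Pascal-rule computation and commutator argument supply the verification that the paper simply calls straightforward. One caution: check the chain-map condition on $y^{(i)}$ by direct computation from $\partial_B(y^{(i)}) = zy^{(i-1)}$ and $\widetilde{d}(y^{(i-1)}) = \widetilde{d}(y)y^{(i-2)}$, and not by an induction through $y\,y^{(i-1)} = i\,y^{(i)}$, since that would only show that $i$ times the commutator vanishes on $y^{(i)}$.
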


\begin{proof}
If $d$ lifts to a derivation $\widetilde{d} \co B \to M$, then $d(z)=\widetilde{d}(\partial_B(y))=\partial_M(\widetilde{d}(y))$. Conversely, suppose~$d(z) = \partial_M(x)$ for some $x \in M$. If $|y|$ is odd, then for $a_0,a_1 \in A$, set 
\[\widetilde{d}(a_0+a_1y)=d(a_0)+(-1)^{|d|}d(a_1)y+a_1x.\]
If $|y|$ is even, then for $a_0,\dots,a_n \in A$, set 
\[\widetilde{d}\left(\sum^n_{i=0} a_iy^{(i)}\right)=\sum^n_{i=0} d(a_i)y^{(i)}+\sum^n_{i=1}a_iy^{(i-1)}x.\]
It is straightforward to check in both cases that $\widetilde{d}:B \to M$ is a derivation.
\end{proof}

We now state and prove the main result of this section. It immediately implies Gulliksen's Theorem (Theorem~\ref{gulliksen}): see Corollary~\ref{cor:gulliksen}. 

\begin{theorem}\label{deviationsboundthm}
Suppose there is a surjective ring homomorphism~$S \onto A_0$, where $S$ is a regular local ring, and assume $s\ce \sup \{i \delimit H_i(A) \ne 0\} <\infty$. 
\begin{enumerate}
\item If $n^S_{t+1}(A)=\cdots=n^S_{t+s+1}(A)=0$ for some $t > s$, where $t$ is even, then $n^S_{t}(A)=0$.
\item If $n^S_{t+1}(A)=\cdots=n^S_{t+s+1}(A)=0$ for some $t > s + 1$, where $t$ is odd, then $n^S_{t-1}(A)=0$. 
\end{enumerate}
\end{theorem}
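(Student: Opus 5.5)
We work with the minimal model $V \ce S[X] \xra{\simeq} A$ of $A$ over $S$, after completing, and use the derivation technique of Halperin, made possible by Lemma~\ref{derivationlift}. The idea is to convert a degree $t$ (or $t-1$) variable into a derivation of $V$ of negative degree and let it act on homology. The assumption that no variables occur in degrees $t+1,\dots,t+s+1$ then forces a contradiction with the bound $H_i(A)=0$ for $i>s$.

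\textbf{Case (1), $t$ even.} Suppose $x \in X_t$. We build a derivation $\theta \co V \to V'$ of degree $-t$, where $V'$ is a suitable quotient of $V$, for instance $V' = V/(X_{\ne t}\text{-ideal})$ or $\k\otimes_{V(t-1)}V$, chosen so that $x$ survives as a nonzero cycle. We want $\theta(x)=1$ and $\theta$ to vanish on $X_{<t}$ and on $S$. We extend $\theta$ variable by variable using Lemma~\ref{derivationlift}.
\begin{itemize}
\item In degrees $\le t$ the extension is unobstructed, because $\theta(\partial x')$ has degree at most $-1$.
\item In degrees $t+1,\dots,t+s+1$ there are no variables, so nothing needs to be extended.
\item For variables $x'$ of degree $>t+s+1$, the obstruction $\theta(\partial x')$ has degree $|x'|-1-t > s$. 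Since $V'\simeq$ something with homology bounded by $s$, every such cycle is a boundary.
\end{itemize}
This is exactly where the gap of length $s+1$ is used. We then pick a nonzero class. Since $t>s$, the homology of $V$ vanishes in degree $t$, so we look instead at $\k\otimes_{S}V$, or at the Koszul object $U(t)$ from Proposition~\ref{quasi-isofibers}, whose homology is controlled by the deviations $e_{i}$. By minimality, $\theta$ applied to the class of $x^{k}$ gives $k x^{k-1}$. In characteristic $p$ we should instead work with $\Gamma$-variables, or with the acyclic-closure side through Proposition~\ref{switchingdegree}, so that $\theta$ acts as a divided-power derivation sending $x^{(k)}\mapsto x^{(k-1)}$. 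The goal is a nonzero class in arbitrarily high degree in a complex whose homology is bounded, which is the desired contradiction.

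\textbf{Case (2), $t$ odd.} Here the vanishing of $n_{t+1},\dots$ propagates down to $n_{t-1}$, which is even. We run the same argument with a derivation of degree $-(t-1)$ killing a variable $x\in X_{t-1}$. Its obstructions live in degrees $|x'|-t$, and these exceed $s$ once $|x'|\ge t+s+1$. The shift by one accounts for the stronger hypothesis $t>s+1$, which guarantees $t-1>s$. We may also want Proposition~\ref{switchingdegree} with switching degree $s'$ chosen near $t$, so that the even variable can be taken as a divided-power variable. This makes the derivation act nontrivially on all of its powers and avoids factors of $k$ vanishing mod $p$.

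\textbf{Main obstacle.} The hard step is choosing the target module $V'$ so that two things hold at once. First, its homology must vanish above degree roughly $s$, which is what kills the obstructions. Second, it must still detect unboundedly many powers of $x$, which is what produces the contradiction. A natural first try is $V'=V\otimes_{S[X_{<t}]}\k$ or the analogous quotient of the switched model. We expect its bounded homology to follow from Theorem~\ref{derivednilpotentcis}-type arguments via Lemma~\ref{nilpotenthomology}(2), but matching that bound to exactly $s$ is where we expect trouble.
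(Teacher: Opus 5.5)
Your outline has the right ingredients: a Halperin-style derivation of negative degree, extended with Lemma~\ref{derivationlift}, with divided powers obtained via a switching degree. However, the extension step does not work as you set it up, and the ``main obstacle'' you name is precisely the missing idea.

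\textbf{Case (1): the choice of target.} You ask one target $V'$ to do two incompatible jobs. To kill obstructions, every cycle of degree $>s$ in $V'$ must be a boundary. To get a contradiction, the degree-$t$ variable $y$ must give a nonzero class $1\otimes y$ in degree $t>s$. For $V'=\k\otimes_{V(t-1)}V$ the first requirement fails, since $1\otimes y$ is a nonzero class in $H_t(V')$.

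The paper separates the two roles. Take $V$ to be the minimal model with switching degree $t$, with divided power variables $y_1,\dots,y_m$ in degree $t$ (its variable counts agree with $n^S_i(A)$ in the needed range by Propositions~\ref{quasi-isofibers} and~\ref{switchingdegree}). The paper builds $\theta\colon V\to V$ with values in $V$ itself. It vanishes on $V(t-1)\langle y_1,\dots,y_{m-1}\rangle$ and satisfies $\theta(y_m^{(r)})=y_m^{(r-1)}$. For a variable of degree $i>t+s+1$, the obstruction is a cycle of degree $i-t-1$ in $V(i-1)$. Since $s<i-t-1<i-1$ and $H_j(V(i-1))\cong H_j(A)$ for $j<i-1$, this cycle is a boundary.

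Only afterwards does one pass to the quotient. Because $\theta$ kills $V(t-1)$, it descends to a derivation $\bar\theta$ of $\k\otimes_{V(t-1)}V$ with $\bar\theta^{j}(1\otimes y_m^{(j)})=1\otimes 1$. So no bound ``matching $s$'' is needed on $\k\otimes_{V(t-1)}V$. \emph{Any} bound on its homology makes $1\otimes y_m^{(j)}$ a boundary for $j\gg 0$, hence $1\otimes 1$ a boundary, contradicting $H_0=\k$.

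The paper obtains that bound by identifying $\k\otimes_{V(t-1)}V$ with $U(t)$ and invoking Lemma~\ref{nilpotenthomology}(2). Be aware that this lemma only descends boundedness across finitely many adjoined variables. For $R=\k[x,y]/(x,y)^2$, the complex $U(2)$ already has unbounded homology. So this input needs an argument that genuinely uses the hypotheses.

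\textbf{Case (2): obstructions in degree $0$.} There is an additional gap here. The hypothesis does not exclude variables of degree $t$. For such a variable $y'$, the obstruction $\theta(\partial y')$ lies in degree $t-(t-1)-1=0$, not above $s$. It is the coefficient $a\in S$ of $y_m$ in $\partial y'$. Minimality puts $a$ in $\m_S$, but $a$ is in general not a boundary in $V$, because its image in $H_0(V)=H_0(A)$ need not vanish. So with target $V$ your derivation cannot even be extended over $V(t)$.

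The paper's fix is to let $\theta$ take values in $M=K(\m_S,V)/(e_1,\dots,e_c)^2$. In $M$, every element of $\m_S$ is a boundary, $H_0(M)=\k$, and $H_i(M)=0$ for $i>s+1$. The remaining obstructions come from variables of degree $\ge t+s+2$, so they sit in degrees $\ge s+2$ and vanish. Composing with the projection $M\to\k\otimes_{V(t-2)}V$ then produces the derivation $\bar\theta$ needed to conclude as in case (1).
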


\begin{proof}
We first prove (1). Since $s \ge 0$, we have $t \ge 2$. Assume toward a contradiction that we have~$n^S_t(A)>0$, and let $V$ be the minimal model for~$A$ over~$S$ with switching degree~$t$. It follows from Propositions~\ref{quasi-isofibers} and~
\ref{switchingdegree} that $n_i(V)=n^S_i(A)$ when~$i<2t$, and~$n_{2t}(V) \le n^S_{2t}(A)$. In particular, we have $n_t(V)>0$, and $n_i(V)=0$ for $t+1 \le i \le t+s+1$, since $t>s$ implies that $t+s+1 \le 2t$. Let $y_1, \dots, y_m$ be the divided power variables of $V$ in degree $t$, where $m \ge 1$. Define a degree $-t$ derivation $\theta_t \co V(t) \to V(t)$ that vanishes on $V(t-1)\langle y_1,\dots,y_{m-1}\rangle$ and satisfies~$\theta_t(y_m^{(r)})=y_m^{(r-1)}$ for all $r>0$.

We have $V(t)=V(t+s+1)$, so we may extend $\theta_t$ to a derivation $\theta_{t+s+1}$ on~$V(t+s+1)$ by taking~$\theta_{t+s+1}=\theta_t$. We argue by induction that $\theta_t$ may be extended to a degree $-t$ derivation~$\theta_i \co V(i) \to V(i)$ for any $i \ge t+s+1$. Suppose $\theta_{i-1}$ has been constructed, where $i > t + s + 1$. Let $y$ be a variable in $V$ of degree $i$, and write $z \ce \del_{V}(y)$, so that $z$ is a degree $i -1$ cycle in~$V(i-1)$. Thus, $\theta_{i - 1}(z)$ is a degree $i - t - 1$ cycle in~$V(i - 1)$. We now show that $\theta_{i - 1}(z)$ is a boundary in~$V(i - 1)$. The isomorphism $H(V) \cong H(A)$ implies that  $H_j(V(i-1)) \cong H_j(A)$ for all~$j < i - 1$. Since~$i > t + s + 1$, we have $i - t - 1 > s$, which forces $H_{i - t - 1}(V(i-1)) = H_{i - t - 1}(A) = 0$. Thus, $\theta_{i - 1}(z)$ is a boundary in $V(i - 1)$, and so it follows from~Lemma \ref{derivationlift} that we may extend~$\theta_{i-1}$ to a degree~$-t$ derivation $\theta_i \co V(i) \to V(i)$. Taking the colimit, we obtain a degree $-t$ derivation~$\theta \co V \to V$, which induces a degree $-t$ derivation $\overline{\theta} \co \k \otimes_{V(t-1)} V \to \k \otimes_{V(t-1)} V$. 

Let $U$ and $U'$ be the acyclic closures of $\k$ over $K(\m_A, A)$ and $K(\m_V, V)$, respectively, and let $W$ be the minimal model of $A$ over $S$. Since both $W$ and $V$ are semifree over $W(t-1) \cong V(t-1)$, Proposition~\ref{quasi-isofibers} implies:
$$
H(U(t)) \cong H(U'(t)) \cong H(\k \otimes_{W(t-1)} W) \cong H(\k \otimes_{V(t-1)} V).
$$
By Theorem \ref{acyclicclosureofkminimal}, the acyclic closure $U$ is minimal as a dg-$K(\m,A)$-module, and so every cycle in~$U$ lies in $\m_{K(\m_A,A)} \cdot U$. By Lemma~\ref{koszulnilpotent}, $K(\m_A,A)$ is nilpotent, and so we may apply
Lemma~\ref{nilpotenthomology}(2) to conclude that $U(t)$ has bounded homology. Thus, $\k \otimes_{V(t-1)} V$ has bounded homology as well. But~$1 \otimes y_m$ is a cycle in $\k \otimes_{V(t-1)} V$, so $1 \otimes y_m^{(j)} \in \k \otimes_{V(t-1)} V$ must be a boundary for $j \gg 0$. Thus, $\overline{\theta}^j(1 \otimes y_m^{(j)})=1 \otimes 1$ is a boundary in $\k \otimes_{V(t-1)} V$, a contradiction. This proves (1).

We now prove (2). If $t = 1$, then there is nothing to prove, so assume $t \ge 3$. 
By way of contradiction, suppose $n^S_{t-1}(A)>0$. We now let $V$ denote the minimal model for $A$ with switching~degree~$t-1$. Once again, it follows from Propositions~\ref{quasi-isofibers} and~
\ref{switchingdegree} that $n_i(V)=n^S_i(A)$ when~$i<2t$, and $n_{2t}(V) \le~n^S_{2t}(A)$. In particular, we have
$n_{t-1}(V)=n^S_{t-1}(A)>0$, and $n_i(V)=0$ for $t+1 \le i \le t+s+1$, since $t>s+1$ implies that $t+s+1 < 2t$.

Let $K$ denote the Koszul complex $K(\m_S, V) \ce V[e_1, \dots, e_c \delimit \del(e_i) = y_i]$ on a minimal generating set $y_1, \dots, y_c$ of $\m_S$ (Definition~\ref{defn:koszulcomplex}). Let $M$ be the dg-$V$-module
\[M\ce K/(e_1,\dots e_c)^2= V \oplus V \cdot e_1 \oplus \cdots \oplus V \cdot e_c.\] We have $M/V \cong V^{\oplus c}[-1]$ as dg-$V$-modules. In particular, $\sup \{i \delimit H_i(M/V) \ne 0\} = s+1$, and~$H_0(M/V)=0$. The long exact sequence in homology associated to 
$0 \to V \to M \to M/V \to 0$
thus implies $\sup\{ i \delimit H_i(M) \ne 0\} = s+1$. We also have $H_0(M) = \k$. 

Let $y_1, \dots, y_m$ be the variables of $V$ in degree $t-1$, where $m \ge 1$. Let~$\theta_{t-1} \co V(t-1) \to M$ be the degree $-t + 1$ derivation that vanishes on $V(t-2)\langle y_1,\dots,y_{m-1} \rangle$
and satisfies $\theta_{t-1}(y_m^{(r)})=y_m^{(r-1)}$ for any $r>0$. Suppose $z=\sum_{i=1}^n r_i+\sum^m_{j=1}a_jy_j$ is a cycle of degree $t-1$ in  $V(t-1)$, where~$r_i \in V(t-2)$ and $a_j \in S$ for all $i, j$. Since $H_{t-1}(V)=H_{t-1}(A)=0$, it follows from the minimality of $V$ that~$a_j \in \m_S$ for all $j$. Thus, $\theta_{t-1}(z)=a_m$ is a boundary in $M$, and so Lemma~\ref{derivationlift} implies that we may extend $\theta_{t-1}$ to a derivation $\theta_t:V(t) \to M$. By assumption, $V(t)=V(t+s+1)$, so clearly $\theta_{t}$ extends to $V(t+s+1)$. We now argue by induction that $\theta_{t}$ extends to a derivation~$\theta_i:V(i) \to M$ for any $i \ge t+s+1$. Suppose $\theta_{i}$ has been constructed, let~$y$ be a variable in $V$ of degree $i+1$, and set $z \ce \del_V(y)$, so that $z$ is a degree~$i$ cycle in $V(i)$. Thus, $\theta_i(z)$ is a degree $i - t + 1$ cycle in $M$. Since $i \ge t+s+1$, we have $i-t+1 \ge s+2$. As $H_i(M) = 0$ for $i \ge s+1$, it follows that $\theta_i(z)$ is a boundary in $M$, and so Lemma \ref{derivationlift} implies that we may extend $\theta_i$ to a derivation~$\theta_{i+1}:V(i+1) \to M$. Taking the colimit yields a derivation~$\theta: V\to M$. The natural projection $V \to \k \otimes_{V(t-2)} V$ extends to a map $M \xrightarrow{p} \k \otimes_{V(t-2)} V$ sending each $V \cdot e_j$ to $0$. The composition $V \xrightarrow{\theta} M \xrightarrow{p} \k \otimes_{V(t-2)} V$ factors to induce a derivation $\overline{\theta} \co \k \otimes_{V(t-2)} V \to \k \otimes_{V(t-2)} V $. We now obtain a contradiction exactly as in the case where $t$ is even.
\end{proof}

\begin{remark}
Theorem \ref{deviationsboundthm} should be compared with Theorem \ref{odddeviationliftstoeven}. In essence, Theorem \ref{deviationsboundthm} trades control over the parity of higher nonvanishing deviations for a significantly tighter range in which they must occur. This tightening on the range is necessary to recover Theorem~\ref{gulliksen}.
\end{remark}

As an immediate consequence of Theorem \ref{deviationsboundthm}, we recover Gulliksen's characterization of when a local ring is a complete intersection (Theorem~\ref{gulliksen}). In fact, we recover a strengthened version of this theorem that is due to Halperin.

\begin{cor}[\cite{halperin87} Theorem B]
\label{cor:gulliksen}
A local ring $(R,\m,\k)$ is a complete intersection if and only if, for some $t \ge 1$, the deviation $\epsilon^R_t(\k)$ vanishes.
\end{cor}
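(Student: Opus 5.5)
We deduce this from Theorem~\ref{deviationsboundthm} applied to $A=\widehat R$, viewed as a local dg-algebra concentrated in degree $0$. Deviations, Betti numbers and the complete intersection property are all unchanged by $\m$-adic completion, so we may assume $R$ is complete. Fix a minimal Cohen presentation $S\onto R$ with kernel $I$. Since $H(R)=R$ is concentrated in degree $0$, we have $s=0$ in Theorem~\ref{deviationsboundthm}. Moreover $A_0=H_0(A)=R$, so the embedding dimensions $m$ and $n$ in Proposition~\ref{quasi-isofibers} coincide. That proposition then gives
$$
n^S_i(R)=\epsilon^R_{i+1}(\k)\quad\text{for all } i\ge 1 .
$$
The whole argument is therefore a translation between deviations of $R$ and the numbers of variables in the minimal model $V$ of $R$ over $S$.

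\emph{The forward direction.} If $R$ is a complete intersection, $I$ is generated by a regular sequence $f_1,\dots,f_c\in\m_S^2$. The Koszul complex $K(f_1,\dots,f_c,S)$ is then a minimal polynomial dg-$S$-algebra resolving $R$. By the uniqueness in Theorem~\ref{thm:minmodelexist} it is the minimal model, so $n^S_i(R)=0$ for $i\ge 2$. Hence $\epsilon^R_t(\k)=0$ for all $t\ge 3$; in particular some deviation vanishes.

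\emph{The converse.} Suppose $\epsilon^R_t(\k)=0$ for some $t\ge 1$.
\begin{itemize}
\item If $t=1$, then $\m=0$ and $R$ is a field, which is a complete intersection.
\item If $t=2$, then $n^S_1(R)=0$, so $I=0$ and $R=S$ is regular.
\item If $t\ge 3$, set $j\ce t-1\ge 2$, so that $n^S_j(R)=0$. We push this vanishing down to degree $2$ using Theorem~\ref{deviationsboundthm} with $s=0$, applied with its parameter equal to $j-1\ge 1$:
\begin{itemize}
\item If $j-1$ is even (so $j-1\ge 2>0$), then part (1), whose hypothesis is just $n^S_j=0$, gives $n^S_{j-1}(R)=0$.
\item If $j-1$ is odd and $j-1>1$, then part (2) gives $n^S_{j-2}(R)=0$.
\item The remaining case $j-1=1$ means $j=2$, and we have already arrived.
\end{itemize}
Iterating strictly decreases $j$ and never goes below $2$: from $j=3$, part (1) yields $n^S_2=0$, and from $j=4$, part (2) yields $n^S_2=0$. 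So we eventually obtain $n^S_2(R)=0$.
\end{itemize}

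\emph{From $n^S_2(R)=0$ to a regular sequence.} Let $f_1,\dots,f_c$ be the minimal generators of $I$ corresponding to the degree-$1$ variables, so that $V(1)=K(f_1,\dots,f_c,S)$. Since $X_2=Y_2=\emptyset$, Lemma~\ref{mingens} shows that $H_2(\cone(p_1))=0$. The exact sequence
$$
H_2(\cone(p_1))\to H_1(V(1))\to H_1(R)=0
$$
then gives $H_1(K(f_1,\dots,f_c,S))=0$. Over the local ring $S$, vanishing of first Koszul homology forces $f_1,\dots,f_c$ to be a regular sequence, so $R$ is a complete intersection. Alternatively, one can note that $K(f_1,\dots,f_c,S)\to R$ is then a minimal model, by uniqueness, and conclude the same way.

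The main point needing care is the downward bookkeeping in the case $t\ge 3$. One must check that the parity and size hypotheses $t>s$ and $t>s+1$ of Theorem~\ref{deviationsboundthm} hold at every step of the descent when $s=0$, and that the descent always terminates at degree $2$ rather than skipping past it.
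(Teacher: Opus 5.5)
Your proposal is correct and follows essentially the paper's route. You reduce to complete $R$, use Proposition~\ref{quasi-isofibers} with $m=n$ to identify $\epsilon^R_{i+1}(\k)$ with $n^S_i(R)$, apply Theorem~\ref{deviationsboundthm} with $s=0$ to push the vanishing down to $n^S_2(R)=0$, and conclude from $H_1(K(I,S))=0$. The only differences are cosmetic: you run an explicit downward descent from an arbitrary vanishing $n^S_j(R)$, whereas the paper argues by contradiction using the least $j\ge 4$ with $n^S_j(R)=0$, and you spell out the forward direction and the cases $t=1,2$ that the paper leaves implicit.
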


\begin{proof}
The ``only if" direction is clear. Conversely, we may assume without loss of generality that~$R$ is complete. Let $R = S/I$ be a minimal presentation of $R$. The first deviation $\epsilon_1^R(\k)$ is equal to the minimal number of generators of $\m$, so~$\epsilon_1^R(\k) = 0$ if and only if $R$ is a field. By Proposition~\ref{quasi-isofibers}, we have $\epsilon^R_i(\k)=n_{i-1}^S(R)$ for $i \ge 2$ (this is well-known in this setting: see \cite[Theorem 7.2.6]{avramov}). If~$n_2^S(R)=0$, then~$H_1(K(I,S))=0$, and so $R$ is a complete intersection. We may therefore assume~$n_2^S(R) \ne 0$. By Theorem~\ref{deviationsboundthm},~$n_3^S(R) \ne 0$ as well. Thus, there is some $j \ge 4$ such that~$n_j^S(R)=0$, but $n_{\ell}^S(R) \ne 0$ for $2 \le \ell<j$. We conclude that $n_{j-1}^S(R)$ and $n_{j-2}^S(R)$ are nonzero, which contradicts Theorem \ref{deviationsboundthm}, completing the proof. 
\end{proof}

\bibliographystyle{amsalpha}
\bibliography{references}
\Addresses
\end{document}